\startlocaldefs \numberwithin{equation}{section}
\theoremstyle{plain}
\newtheorem{theorem}{Theorem}[section]
\newtheorem{remark}{Remark}[section]
\newtheorem{lemma}{Lemma}[section]
\newtheorem{lemma_def}{Lemma and Definition}[section]
\def\@bysame#1{\vrule height 1.5pt depth -1pt width 3em \hskip
0.5em\relax}
\newcommand{\N}{ \mathbb{N} }
\newcommand{\Z}{ \mathbb{Z} }
\newcommand{\R}{ \mathbb{R} }
\newcommand{\trunc}[1]{ {\lfloor #1 \rfloor} }
\newcommand{\wh}[1]{ \widehat{ #1 } }
\newcommand{\wt}[1]{ \widetilde{ #1 } }
\newcommand{\calA}{\mathcal{A}}
\newcommand{\calB}{\mathcal{B}}
\newcommand{\calC}{\mathcal{C}}
\newcommand{\calD}{\mathcal{D}}
\newcommand{\calE}{\mathcal{E}}
\newcommand{\calF}{\mathcal{F}}
\newcommand{\calG}{\mathcal{G}}
\newcommand{\calN}{\mathcal{N}}
\newcommand{\calO}{\mathcal{O}}
\newcommand{\calT}{\mathcal{T}}
\newcommand{\calW}{\mathcal{W}}
\newcommand{\calY}{\mathcal{Y}}
\newcommand{\eins}{\mathbf{1}}
\newcommand{\matA}{{ \mathbf A}}
\newcommand{\matB}{{\mathbf B}}
\newcommand{\matC}{{\mathbf C}}
\newcommand{\matP}{{ \mathbf P}}
\newcommand{\vecf}{{ \mathbf f}}
\newcommand{\vece}{{ \mathbf e}}
\newcommand{\vecv}{{ \mathbf v}}
\newcommand{\vecw}{{ \mathbf w}}
\newcommand{\vecx}{{ \mathbf x}}
\newcommand{\vecY}{{ \mathbf Y}}
\newcommand{\bfgamma}{\boldsymbol{\gamma}}
\newcommand{\bfSigma}{\boldsymbol{\Sigma}}
\newcommand{\Var}{{\mbox{Var\,}}}
\newcommand{\Cov}{{\mbox{Cov\,}}}
\newcommand{\matid}{I}
\newcommand{\trace}{ \operatorname{tr} }
\newcommand{\cadlag}{c\`adl\`ag }
\newcommand{\chg}[2]{#2}
\begin{document}

\begin{frontmatter}

\title{Asymptotics for High--Dimensional Covariance Matrices and Quadratic Forms with Applications to the Trace Functional and Shrinkage}
\runtitle{Inference for the trace}


\begin{aug}
\author{\fnms{Ansgar} \snm{Steland${}^\ast$}
\ead[label=e1]{steland@stochastik.rwth-aachen.de}}
\author{\fnms{Rainer} \snm{von Sachs} 
\ead[label=e2]{rainer.vonsachs@uclouvain.be}}
\ead[label=u1,url]{www.stochastik.rwth-aachen.de}


\address{Institute of Statistics\\ RWTH Aachen University \\ W\"ullnerstr. 3, D-52056 Aachen, Germany\\
\printead{e1}} 
\address{Institut de Statistique, Biostatistique et Sciences Actuarielles (ISBA)\\ Universit\'e catholique de Louvain\\ Voie du Roman Pays 20, B-1348 Louvain-la-Neuve, Belgium\\
\printead{e2}} \printaddresses {July 10, 2017} \bigskip

\end{aug}


\runauthor{A. Steland, R. von Sachs}

\begin{abstract}
	We establish large sample approximations for an arbitray number of bilinear forms of the sample variance-covariance matrix of a high-dimensional vector time series using $ \ell_1 $-bounded weighting vectors. Estimation of the asymptotic covariance structure is also discussed. The results hold true without any constraint on the dimension, the number of forms and the sample size or their ratios. 
	Concrete and potential applications are widespread and cover high-dimensional data science problems such as projections onto sparse principal components or more general spanning sets as frequently considered, e.g. in classification and dictionary learning. 
    As two specific applications of our results, we study in greater detail the asymptotics of the  trace functional and shrinkage estimation of the covariance matrices. In shrinkage estimation, it turns out that the asymptotics differs for weighting vectors bounded away from orthogonaliy and nearly orthogonal ones in the sense that their inner product converges to $0$.
\end{abstract}

\begin{keyword}[class=AMS]{
\kwd[Primary ]{60F17} 
\kwd[; secondary ]{62E20} 
}
\end{keyword}


\begin{keyword}
	\kwd{Brownian motion} \kwd{Linear process} \kwd{Long memory} \kwd{Strong approximation}	\kwd{Quadratic form} \kwd{Trace}
\end{keyword}

\end{frontmatter}

\section{Introduction}

A large number of procedures studied to analyze high--dimensional vector time series of dimension $ d_n $ depending on the sample size $n$ relies on projections, e.g. by projecting the observed random vector onto a spanning set of a lower dimensional subspace of dimension $ L_n  $. Examples include sparse principal component analysis, see e.g. \cite{TibshiraniWittenHastie2009}, in order to reduce dimensionality of data, sparse portfolio replication and index tracking as studied by \cite{BrodieEtAl2009}, or dictionary learning, see \cite{BarronCohenDahmenDeVore2008}, where one aims at representing input data by a sparse linear combination of the elements of a dictionary, frequently obtained as the union of several bases and/or historical data.

When studying projections, it is natural to study the associated bilinear form $ \vecv_n' \wh{\bfSigma}_n \vecw_n $, $ \vecv_n, \vecw_n \in \R^{d_n} $, representing the dependence structure in terms of the projections' covariances. Here and throughout the paper $ \wh{\bfSigma}_n $ is the (uncentered) sample variance--covariance matrix. In order to conduct inference, large sample distributional approximations are needed. For a vector time series model given by correlated linear processes, we established in \cite{StelandvonSachs2016} a strong approximation by a Brownian motion for a single quadratic form, provided the weighting vectors are uniformly bounded in the $ \ell_1 $-norm. It turned out that the result does not require any condition on the ratio of the dimension and the sample size, contrary to many asymptotic results in highdimensional statistics and probability.

In the present article, we study the more general case of an {\em increasing} number of quadratic forms as arising when projecting onto a sequence of subspaces whose dimension converges to $ \infty $. Noting that the analysis of autocovariances of a stationary linear time series appears as a special case of our approach, there are a few recent results related to our work: \cite{WuMin2005} established a central limit theorem for a finite number of autocovariances, whereas in \cite{WuHuangZheng2010} the case of long memory series has been studied. \cite{Jirak2012} has studied the asymptotic theory for detecting a change in mean of a vector time series with growing dimension.

To treat the case of an increasing number of bilinear forms, we consider two related but different frameworks: The first framework uses a sequence of Euclidean spaces $ \R^{d_n} $ equipped with the usual Euclidean norm. The second framework embeds those spaces in the sequence space $ \ell_2 $ equipped with the $ \ell_2 $--norm.  It is shown that, in both frameworks, an increasing number of, say $L_n$, quadratic forms can be approximated by Brownian motions without any constraints on $ L_n $, $ d_n $ and $n$ apart from $ n \to \infty $. One of our main results asserts that, for the assumed time series models, one can define, on a new probability space, equivalent versions and a Gaussian process $ \calG_n $ taking values in $ C([0,1], \R^{L_n} ) $, such that
\[
  \sup_{t \in [0,1]} \frac{1}{\sqrt{nL_n}} \left| \left( \vecv_n^{(j)}{}'(\wh{\bfSigma}_{\trunc{nt}}- E \wh{\bfSigma}_{\trunc{nt}} )\vecw_n^{(j)} \right)_{j=1}^{L_n} - \calG_n(t)   \right| = o_P(1),
\]
as $ n \to \infty $, almost surely (a.s.), without any constraints on $ L_n $, $d_n $.

We believe that those results have many applications in diverse areas, as indicated above.
In this paper we study in some detail two direct applications: The first application considers the trace operator, which equals the trace matrix norm $ \| \cdot \|_{tr} $ when applied to covariance matrices. We show that the trace of the sample covariance matrix, appropriately centered, can be approximated by a Brownian motion, a.s. on a new probability space, which also establishes the convergence rate
\[
  \left| \| \wh{\bfSigma}_n \|_{tr} - \| E \wh{\bfSigma}_n \|_{tr} \right| = O_P( n^{-1/2} d_n ).
\]

The second application elaborated in this paper is shrinkage estimation of a covariance matrix as studied in depth for i.i.d. sequences of high--dimensional random vectors as well as dependent vector time series, see by \cite{LedoitWolf2003}, \cite{LedoitWolf2004} and \cite{Sancetta2008} amongst others. In order to regularize the sample variance-covariance matrix, the shrinkage estimator considers a convex combination with a well--defined target that usually corresponds to a simple regular model. We consider the identity target, i.e. a multiple of the identity matrix $ \matid_n $ of dimension $d_n$. To the best of our knowledge, large sample approximations for those estimators have not yet been studied. We show that, uniformly in the shrinkage weight for the convex combination, a bilinear form given by the shrinkage estimator can be approximated by a Gaussian process when it is centered at the shrunken true covariance matrix using the same shrinkage weight. By uniformity, the result also holds for the widely used estimator of the optimal shrinkage weight. For this estimated optimal weight the convergence rate under quite general conditions is known. It turns out that, when comparing the matrices in terms of a natural pseudodistance induced by bilinear forms, the convergence rate carries over from the optimal weight's estimator. We also compare the shrinkage estimator using the estimated optimal weight with an oracle estimator using the unknown optimal weight. Last, we study the case of nearly (i.e. asymptotically) orthogonal vectors. As a consequence of the Kabatjanskii-Levenstein bound, see \cite{Tao2013}, this property allows to place much more unit vectors on the unit sphere. It turns out that for nearly orthogonal vectors 
the nonparametric part dominates in large samples, contrary to the situation for vectors bounded away from orthogonality.

The high-dimensional time series model of the paper is as follows. At time $n$, $ n \in \N $, we observe a $ d = d_n $ dimensional mean zero vector time series
\[
  \vecY_{ni} = (Y_{ni}^{(1)}, \dots, Y_{ni}^{(d_n)} )',\qquad 1 \le i \le n,
\]
defined on a common probability space $ ( \Omega, \calF, P ) $,  whose coordinates are causal linear processes,
\begin{equation}
\label{ModelLinearProcess}
Y_{ni}^{(\nu)} = \sum_{j=0}^\infty c_{nj}^{(\nu)} \epsilon_{i-j}, \qquad i \in \N_0, \nu = 1, \dots, d_n,
\end{equation}
where $ \{ \epsilon_t \} $ are independent mean zero error terms\chg{with $ E( \epsilon_k^3 ) = 0 $ for all $k $}{},possibly not identically distributed, such that $ E | \epsilon_k |^{4+\delta} < \infty $ for some $ \delta > 0 $ and $ n^{-1} \sum_{i=1}^n E(\epsilon_i^r) $, $ r = 2, 3, 4 $, converges. The coefficients $ c_{nj}^{(\nu)}  $ may depend on $n$ and are therefore also allowed to depend on the dimension $d_n$. We impose the following growth condition.

\textbf{Assumption (A):} The coefficients $ c_{nj}^{(\nu)} $ of the linear processes (\ref{ModelLinearProcess})
satisfy 
\begin{equation}
\label{SuffCond}
\sup_{n \in \N} \max_{1 \le \nu \le d_n} | c_{nj}^{(\nu)} |^2 \ll j^{-3/2-\theta}
\end{equation}
for some $ 0 < \theta < 1/2 $. 

It is well known that Assumption (A) covers common classes of weakly dependent time series such as ARMA($p,q$)-models as well as a wide range of long memory processes. We refer to \cite{StelandvonSachs2016} for a discussion.


Define the (centered) bilinear form 
\begin{equation}
\label{DefQForm}
  Q_n(\vecv_n, \vecw_n ) =  \sqrt{n} \vecv_n' (\wh{\bfSigma}_n - \bfSigma_n) \vecw_n, \qquad \vecv_n, \vecw_n \in \R^{d_n},
\end{equation}
where
\begin{equation}
\label{DefSigmaHat}
  \wh{\bfSigma}_n = \frac1n \sum_{i=1}^n \vecY_{ni} \vecY_{ni}'
  \qquad \text{and} \qquad
  \bfSigma_n = E \wh{\bfSigma}_n.
\end{equation}
The class of proper (sequences of) weighting vectors,
\[
  \vecw_n = (w_{n1}, \dots, w_{nd_n} )', \qquad n \ge 1,
\]
studied throughout the paper is the set $ \calW $ of those sequences $ \{ \vecw_n : n \ge 1 \} $, $ \vecw_n \in \R^{d_n} $, $n \ge 1 $, which have uniformly bounded $ \ell_1 $-norm in the sense that
\begin{equation}
\label{l1Condition}
  \sup_{n \in \N} \| \vecw_n \|_{\ell_1} = \sup_{n \in \N} \sum_{\nu=1}^{d_n} | w_{n\nu} | < \infty.
\end{equation}
$ \ell_1$-weighting vectors naturally arise in various applications
such as sparse principal component analysis as, see e.g. \cite{TibshiraniWittenHastie2009}, or sparse financial portfolio selection as studied by \cite{BrodieEtAl2009}. For a more detailed discussion we refer to \cite{StelandvonSachs2016}.

\chg{}{
	It is worth mentioning that our results easily carry over to weighting vectors with uniformly bounded $ \ell_2 $-norm, provided one relies on standardized versions of the bilinear form (\ref{DefQForm}). First notice that
	conditions (\ref{SuffCond}) and (\ref{l1Condition}) allow us to control the linear process coefficients of a projected time series,
	$ \vecw_n' \vecY_{ni} $, $ 1 \le i \le n $, which are then $ O( j^{-3/2-\theta}) $ and therefore decay at the same rate as the original time series. The $ \ell_2 $ assumption 
  \begin{equation}
  \label{l2Condition}
	   \sup_{n \ge 1} \sqrt{ \sum_{\nu=1}^{d_n} w_{n\nu}^2 } < \infty,
   \end{equation}
   leads to the estimate
\[
  \left|  \sum_{\nu=1}^{d_n} w_{n\nu} c_{nj}^{(\nu)} \right| = O \left( \sqrt{ \sum_{\nu=1}^{d_n} | c_{nj}^{(\nu)} |^2 } \right)
\]
For bounded dimension, (\ref{SuffCond}) yields the estimate $ j^{-3/2-\theta} $ for the latter expression, but for growing dimension this does not hold in general. Assuming 
$ 
	\sum_{\nu=1}^{\infty} | c_{nj}^{(\nu)} |^2 < \infty
$
for all $ j \ge 0 $, is, however, not  reasonable for a high-dimensional setting, since then $ c_{nj}^{(\nu)} = o(1) $, as $ \nu \to \infty $. For example, if
$ c_{nj}^{(\nu)} = \rho_{\nu}^j $, for $ 0 < | \rho_{\nu} | < 1 $, $ \nu \ge 1 $, the latter assumption would rule out the case of observing $ d_n $ autoregressive time series of order $1$ with autoregressive parameters bounded away from zero. On the other hand, if $ w_{n \nu} \ge w_{\min} > 0 $, for $ \nu \ge 1 $, and $  c_{nj}^{(\nu)} \ge c_{j,\min} > 0 $ for $ \nu \ge 1 $ and $ n \ge 1 $, then 
$ \sum_{\nu=1}^{d_n} w_{n\nu} c_{nj}^{(\nu)} \ge d_n w_{\min} c_{j,\min}  \to \infty $ for each lag $ j \ge 0 $. This can be fixed by considering
$ \wt{\vecw}_n' \vecY_{ni} $ instead of $ \vecw_n' \vecY_{ni} $, where $ \wt{\vecw}_{n} = d_n^{-1} \vecw_n $. Then
\[
  \left| \sum_{\nu=1}^{d_n} \wt{w}_{n\nu} c_{nj}^{(\nu)} \right| = O\left( \frac{1}{d_n} \sum_{\nu=1}^{d_n} |c_{nj}^{(\nu)}|^2 \right) \ll j^{-3/2 - \theta}, \qquad j \ge 0.
\]
Next observe that by Jensen's inequality
\[
  \sum_{\nu=1}^{d_n} | \wt{w}_{n\nu} | = \frac{1}{d_n} \sum_{\nu=1}^{d_n} \sqrt{w_{n\nu}^2}
  \le \frac{1}{\sqrt{d_n}} \sqrt{ \sum_{\nu=1}^\infty w_{n\nu}^2}.
\]
Hence,  (\ref{SuffCond})  and the $\ell_2$-condition (\ref{l2Condition}) imply  $ \{ \wt{\vecw}_n : n \ge 1 \} \in \calW $ and  $ \wt{\vecw}_n'\vecY_{ni} $ is a linear time series with coefficients decaying at the same rate $ j^{-3/2-\theta} $ as the original time series. Clearly, for any sequences $ \vecv_n $, $ \vecw_n $ of weighting vectors with uniformly bounded $ \ell_2$-norm, we have the scaling property
\[
  Q_n( \vecv_n, \vecw_n ) = d_n^2 Q_n( \wt{\vecv}_n, \wt{\vecw}_n), 
\]
where $ \wt{\vecv}_n = d_n^{-1} \vecv_n $ and $ \wt{\vecw}_n = d_n^{-1} \vecw_n $ have uniformly bounded $ \ell_1$-norm. But if one standardizes $ Q_n $, the factor $ d_n^2 $ cancels. Hence, in this sense several of our theoretical results can be also applied to study projection onto vectors with uniformly bounded  $ \ell_2 $-norm.
}

The rest of the paper is organized as follows. In Section~\ref{Sec Asymptotics QF}, we introduce the partial sums and partial sum processes associated to an increasing number of bilinear forms and establish the strong and weak approximation theorems for those bilinear forms. The application to the trace functional is discussed in Section~\ref{Sec Asymptotics Trace}. The large sample approximations for shrinkage estimators of covariance matrices are studied in depth in Section~\ref{Sec Asymptotics Shrinkage}.

\section{Large sample approximations for high--dimensional bilinear forms}
\label{Sec Asymptotics QF}

\subsection{Definitions and review}

Let us define the matrix--valued partial sums
\begin{align}
\label{DefPSum1}
\wh{\bfSigma}_{nk} & = \left( \sum_{i=1}^k Y_i^{(\nu)} Y_i^{(\mu)} \right)_{1 \le \nu, \mu \le d_n}, \\
\label{DefPSum2}
\bfSigma_{nk} & = \left( \sum_{i=1}^k E Y_i^{(\nu)} Y_i^{(\mu)} \right)_{1 \le \nu, \mu \le d_n},
\end{align}
for $ n, k \ge 1 $, and put 
\begin{equation}
\label{DefDNK}
D_{nk}( \vecv_n, \vecw_n) = \vecv_n'( \wh{\bfSigma}_{nk} - \bfSigma_{nk} ) \vecw_n,
\qquad n, k \ge 1,
\end{equation}
for two sequences of weighting vectors $ \{ \vecv_n \}, \{ \vecw_n \} \in \calW $. The associated \cadlag processes will be denoted by
\begin{equation}
\label{DefCadlagDNK}
\calD_n(t;\vecv_n, \vecw_n) = n^{-1/2} D_{n,\trunc{nt}}( \vecv_n, \vecw_n) = \vecv_n' n^{-1/2}( \wh{\bfSigma}_{n, \trunc{nt}} - \bfSigma_{n, \trunc{nt}} ) \vecw_n, \qquad t \in [0,1], n \ge 1.
\end{equation}
Especially, we have
\begin{equation}
\label{DefDNCLTVersion}
\calD_n(1) =  \calD_n(1; \vecv_n, \vecw_n ) = \vecv_n' \sqrt{n}( \wh{\bfSigma}_n - \bfSigma_n ) \vecw_n,  n \ge 1,
\end{equation}
with
$
\bfSigma_n = E \wh{\bfSigma}_n = \frac{1}{n} \sum_{i=1}^n E( \vecY_{ni} \vecY_{ni} )'.
$

For some sequence of standard Brownian motions,  $ \{ B_n(t) : t \in [0,\infty) \} $, $ n \ge 1 $, and any constant $ N>0 $ we can introduce the rescaled version $ \{ N^{-1/2} B_N( t N ) : s \in [0,1] \} $ called the {\em [0,1]--version of $ B_n $}. In \cite{StelandvonSachs2016} the following result on the asymptotics of a single bilinear form for a uniformly bounded $ \ell_1 $-projections is shown: 

\begin{theorem}
	\label{Th_Basic} 
	Suppose $ \vecY_{ni} $,  $1 \le i \le n $, $ n \ge 1 $, is a vector time series according to model (\ref{ModelLinearProcess}) that satisfies Assumption (A). Let $ \vecv_n $ and $ \vecw_n $ be weighting vectors with uniformly bounded $ \ell_1 $--norm in the sense of (\ref{l1Condition}). Then, for each $n \in \N $, there exists  equivalent versions of $ D_{nk}( \vecv_n, \vecw_n) $ and $ D_n(t;\vecv_n, \vecw_n) $, $ t \ge 0 $, again denoted by $ D_{nk}(\vecv_n, \vecw_n) $ and $ D_n(t; \vecv_n, \vecw_n) $, and a standard Brownian motion $ \{ W_n(t) : t \ge 0 \} $, which depends on $ (\vecv_n, \vecw_n) $, i.e. $ W_n(t) = W_n(t;\vecv_n, \vecw_n) $, both defined on some probability space $ (\Omega_n, \mathcal{F}_n, P_n ) $, such that for some $ \lambda > 0 $ and a constant $ C_n $
	\begin{equation}
	\label{StrongApprox0}
	| D_{nt}(\vecv_n, \vecw_n ) - \alpha_n(\vecv_n, \vecw_n)  W_n(t) | \le C_n t^{1/2-\lambda}, \qquad \text{for all $ t > 0 $ a.s.,}
	\end{equation}
	where $ \alpha_n(\vecv_n, \vecw_n) $ is defined in (\ref{AlphaN}). If $ C_n n^{-\lambda} = o(1) $, as $ n, t \to \infty $, this implies the strong approximation
	\begin{equation}
	\label{StrongApprox}
	\sup_{t \in [0,1]} | \calD_n(t; \vecv_n, \vecw_n) - \alpha_n(\vecv_n, \vecw_n) W_n(\trunc{nt}/n) | =o(1),
	\qquad \text{a.s.},
	\end{equation}
	as $ n \to \infty $, for the $ [0,1]$--version of $ W_n $ as well as the CLT
	\begin{equation}
	\label{MainCLT}
	| \calD_n(1;\vecv_n, \vecw_n) - \alpha_n(\vecv_n, \vecw_n) W_n(1) | = o(1), \qquad \text{a.s.},
	\end{equation}
	as $ n \to \infty $, i.e. $ \calD_n(1) $ is asymptotically $ \calN(0, \alpha_n^2 ) $.
\end{theorem}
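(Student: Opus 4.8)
The plan is to reduce the bilinear form to a one--dimensional partial--sum problem for a quadratic functional of the innovations and then to invoke a strong invariance principle with an explicit rate. First I would set $ U_{ni} = \vecv_n'\vecY_{ni} $ and $ V_{ni} = \vecw_n'\vecY_{ni} $, so that by (\ref{DefDNK})
\[
  D_{nk}(\vecv_n,\vecw_n) = \sum_{i=1}^k \xi_{ni}, \qquad \xi_{ni} = U_{ni}V_{ni} - E(U_{ni}V_{ni}).
\]
Both $ U_{ni} $ and $ V_{ni} $ are again causal linear processes in $ \{\epsilon_t\} $ with coefficients $ a_{nj} = \sum_{\nu} v_{n\nu}c_{nj}^{(\nu)} $ and $ b_{nj} = \sum_{\nu} w_{n\nu}c_{nj}^{(\nu)} $. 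The crucial and dimension--free observation is that the $ \ell_1 $--bound (\ref{l1Condition}) together with Assumption (A) gives
\[
  |a_{nj}| \le \Big(\sup_{n}\|\vecv_n\|_{\ell_1}\Big)\max_{1\le\nu\le d_n}|c_{nj}^{(\nu)}| \ll j^{-3/4-\theta/2}
\]
uniformly in $ n $, and likewise for $ b_{nj} $; no constraint on $ d_n $ enters, because the $ \ell_1 $--norm absorbs the sum over coordinates into the maximal coefficient. Since $ \theta>0 $, these coefficients lie in $ \ell^{4/3} $, which is exactly what is needed for the product sequence $ \{\xi_{ni}\} $ to have absolutely summable autocovariances and a finite long--run variance.

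Second, I would fix the filtration $ \calF_i = \sigma(\epsilon_s : s\le i) $ and define $ \alpha_n^2 = \alpha_n^2(\vecv_n,\vecw_n) $, the quantity of (\ref{AlphaN}), as the long--run variance $ \lim_k k^{-1}\Var(D_{nk}) $, whose existence rests on the assumed convergence of $ n^{-1}\sum_{i=1}^n E(\epsilon_i^r) $, $ r=2,3,4 $. Expanding $ \xi_{ni} $ as the double sum $ \sum_{j,l\ge 0} a_{nj}b_{nl}(\epsilon_{i-j}\epsilon_{i-l} - E\epsilon_{i-j}\epsilon_{i-l}) $ and using $ E|\epsilon_k|^{4+\delta}<\infty $ shows $ \xi_{ni}\in L^{2+\delta'} $ with a norm bounded uniformly in $ n $. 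The next step is a martingale approximation: writing $ \xi_{ni} = \sum_{s\le i}P_s\xi_{ni} $ with projections $ P_s(\cdot)=E(\cdot\mid\calF_s)-E(\cdot\mid\calF_{s-1}) $, the decay of $ a_{nj},b_{nj} $ renders the physical--dependence measures $ \|P_0\xi_{nj}\|_2 $ summable in $ j $ — they are controlled by the diagonal contribution $ a_{nj}b_{nj}\ll j^{-3/2-\theta} $ together with cross terms of order $ |a_{nj}|(\sum_{l>j}b_{nl}^2)^{1/2}\ll j^{-1-\theta} $ — so that $ D_{nk} = M_{nk} + R_{nk} $, where $ M_{nk}=\sum_{i=1}^k m_{ni} $ is an $ \calF_i $--martingale and the coboundary remainder satisfies $ \max_{k\le n}|R_{nk}| = O(n^{1/2-\lambda}) $ a.s. for a suitable $ \lambda=\lambda(\theta)>0 $.

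Third, I would apply a strong approximation for martingales with bounded $ 2+\delta' $ moments (Skorokhod embedding, or a Koml\'os--Major--Tusn\'ady--type bound for martingale differences) to the single sequence $ \{m_{ni}\}_i $ at fixed $ n $. This yields a Brownian motion $ W_n $ with $ |M_{nk}-\alpha_n W_n(k)|\le C_n k^{1/2-\lambda} $ a.s., the rate being driven by the a.s. fluctuation of the predictable quadratic variation $ \langle M_n\rangle_k = \sum_{i=1}^k E(m_{ni}^2\mid\calF_{i-1}) $ around $ \alpha_n^2 k $, which one again bounds by $ O(k^{1-2\lambda}) $ using the coefficient decay and the fourth--moment structure of $ \xi_{ni} $. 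Combining this with the remainder bound of the previous step gives (\ref{StrongApprox0}); the strong approximation (\ref{StrongApprox}) and the CLT (\ref{MainCLT}) then follow by passing to the $ [0,1] $--version of $ W_n $, rescaling time by $ n $, and invoking the hypothesis $ C_n n^{-\lambda}=o(1) $.

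The main obstacle is obtaining the rate $ t^{1/2-\lambda} $ with a constant $ C_n $ that can be tracked uniformly in $ n $, because $ \xi_{ni} $ is a genuinely quadratic functional of the innovations: its autocovariances and its conditional variance involve the convolutions $ \sum_j a_{nj}b_{n,j+h} $ and the fourth cumulants of the $ \epsilon_t $, all of which must be shown to decay at a rate independent of the dimension $ d_n $. It is precisely here that the $ \ell_1 $--geometry is indispensable, since it is what lets the bound on $ \max_\nu|c_{nj}^{(\nu)}| $ from Assumption (A) carry over to $ a_{nj},b_{nj} $ without accumulating a factor growing in $ d_n $; the entire argument must therefore be arranged so that every constant is expressed through $ \sup_n\|\vecv_n\|_{\ell_1} $, $ \sup_n\|\vecw_n\|_{\ell_1} $ and the innovation moments rather than through $ d_n $.
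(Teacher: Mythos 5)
Your proposal follows essentially the same route as the paper's proof (which is given in \cite{StelandvonSachs2016} and rests on \cite{Kouritzin1995}): reduce the bilinear form to the partial sums of $\xi_i^{(n)}(\vecv_n,\vecw_n)=Y_{ni}(\vecv_n)Y_{ni}(\vecw_n)-E[Y_{ni}(\vecv_n)Y_{ni}(\vecw_n)]$ for the projected scalar linear processes whose coefficients $c_{nj}^{(v)},c_{nj}^{(w)}$ inherit the decay $j^{-3/4-\theta/2}$ uniformly in $n$ from the $\ell_1$--bound and Assumption (A), then pass to a martingale approximation (the martingales of (\ref{DefMartingales}), whose error control is exactly your coboundary bound) and finally apply a strong invariance principle for martingales with polynomial rate. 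Your only deviations --- Wu--type projection operators $P_s$ in place of Kouritzin's explicit tail--sum martingale construction, and a generic Skorokhod/KMT--type embedding in place of his specific martingale approximation theorem --- are stylistic rather than substantive.
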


A multivariate version for  $ L \in \N $ bilinear forms, which approximates 
\[
  \left( \calD_n( \cdot; \vecv_n^{(j)}, \vecw_n^{(j)} ) \right)_{j=1}^{L}
\] 
by a $ L $--dimensional Brownian motion, has been shown in \cite[Th.~4.2]{StelandvonSachs2016}. This result allows to consider the dependence structure which arises when mapping $ \vecY_n = \vecY_{nn} $ onto the subspace $ \mathcal{P}_L = \text{span}\{ \vecw_n^{(j)} : j = 1, \dots, L \} $ spanned by $ L $ weighting vectors,
\[
\vecw_n^{(j)} \in \calW, \qquad j = 1, \dots, L.
\]
We have the canonical mapping, called {\em projection (onto $ \mathcal{P}_L $)} in the sequel, 
\begin{equation}
\label{ScaledProjYn}
\vecY_n \mapsto \matP_n \vecY_n, \qquad
\matP_n = ( \vecw_n^{(1)}, \dots, \vecw_n^{(L)} )',
\end{equation}
which represents the orthogonal projection onto $ \mathcal{P}_L $, if
$ \vecw_n^{(1)}, \dots, \vecw_n^{(L)} $ are orthonormal. 
The associated  variance-covariance matrix is
\[
\Cov( \matP_n \vecY_n ) =
\left( \Cov( \vecw_n^{(j)} \vecY_n, \vecw_n^{(k)} \vecY_n ) \right)_{1 \le j, k \le L} = \left( \vecw_n^{(j)}{}' \bfSigma_n  \vecw_n^{(k)} \right)_{1 \le j, k \le L}.
\]
If the $ \vecw_n^{(j)} $'s are eigenvectors of $ \bfSigma_n $, then
$ \Cov( \matP_n \vecY_n ) $ is a diagonal matrix. But that property
is lost for more general spanning vectors.
Given the sample $ \vecY_{n1}, \dots, \vecY_{nn} $ of $ d_n $-dimensional random vectors, the canonical
nonparametric statistical estimators of   $ \bfSigma_n $ and $  \Cov( \matP_n \vecY_n )  $ are $ \wh{\bfSigma}_n $ as defined in (\ref{DefSigmaHat})  and
\[
\wh{\Cov}( \matP_n \vecY_n ) = 
\left(
\vecw_n^{(j)}{}' \wh{\bfSigma}_n \vecw_n^{(k)} 
\right)_{1 \le j, k \le L}.
\]
The entries of $ \wh{\Cov}( \matP_n \vecY_n ) $ consist of bilinear forms as studied in Theorem~\ref{Th_Basic}, and for fixed $L$ its multivariate extension suffices to study the dependence structure of the projection onto $ \mathcal{P}_L $. This no longer holds, if $ L $ is allowed to grow as the sample size increases, i.e. when studying the case $ L = L_n  \to \infty $, as $ n \to \infty $. Indeed, the treatment of that high--dimensional situation is much more involved. As we shall see, it requires a different scaling and a more involved mathematical framework: The strong approximations we establish in this paper take place in the Euclidean space $ \R^{L_n} $ of growing dimension and the infinite-dimensional Hilbert space $ \ell_2 $, respectively.

Thus, to go beyond the case of a finite number of bilinear forms we now consider 
\[
Q_n( \vecv_n^{(j)}, \vecw_n^{(j)} ) = \vecv_n^{(j)}{}' \wh{\bfSigma}_n \vecw_n^{(j)}, \qquad j = 1, \dots, L_n,
\]
where $ ( \{ \vecv_\ell^{(j)} \}_{\ell=1}^{\infty}, \{ \vecw_\ell^{(j)} \}_{\ell=1}^\infty ) \in \calW \times \calW $, $ j = 1, \dots, L_n $, are $L_n $ pairs of uniformly $ \ell_1 $-bounded sequences of weighting vectors and $L_n$ may tend to infinity as $ n \to \infty $. We are interested in the joint asymptotics of the centered and scaled versions of the corresponding statistics (\ref{DefDNCLTVersion}) given by
\begin{equation}
\label{DefDNJ}
\calD_{nj} = 
L_n^{-1/2} \calD_n(1; \vecv_n^{(j)}, \vecw_n^{(j)} )
, \qquad j = 1, \dots, L_n,
\end{equation}
and the associated sequential \cadlag processes
\begin{equation}
\label{DefDNSJ}
\calD_{nj}(t) = L_n^{-1/2} \calD_n(t; \vecv_n^{(j)}, \vecw_n^{(j)} ), \qquad t \in [0,1], \qquad j = 1, \dots, L_n,
\end{equation}
cf. (\ref{DefCadlagDNK}). The additional factor $ L_n^{-1/2} $ anticipates the right scaling to obtain a large sample approximation.

Further, we are interested in studying weighted averages where averaging takes place over all $L_n$ forms and all sample sizes $n \in \N $. Let $ \lambda_n $ be the weight for sample size $n$ and $ \mu_\rho $ the weight for the $\rho$th quadratic form associated to a pair of sequences of weighting vectors $ \{ \vecv_n^{(\rho)}, \vecw_n^{(\rho)} : n \ge 1 \} $ for $ \rho = 1, \dots, L_n $, $ n \ge 1 $.  Define for $ k \ge 1 $
\begin{equation}
\label{DefQF}
D_k( \{ (\vecv_n^{(\rho)}, \vecw_n^{(\rho)} ) \} )
= \sum_{n,m=1}^\infty \lambda_n \lambda_m \sum_{\rho=1}^{L_n} \sum_{\sigma=1}^{L_m} \mu_\rho \mu_\sigma
\vecv_n^{(\rho)}{}' ( \wh{\bfSigma}_{nmk} - \bfSigma_{nmk} ) \vecw_m^{(\sigma)}, 
\end{equation}
where 
\begin{align}
\label{DefSNMK1}
\wh{\bfSigma}_{nmk} &= \sum_{i \le k} \vecY_{ni} \vecY_{mi}', \\
\label{DefSNMK2}
\bfSigma_{nmk} & = E( \wh{\bfSigma}_{nmk} ),
\end{align}
for $ n, m \ge 1 $. Notice the relations
\[
  \wh{\bfSigma}_{nnk} = \wh{\bfSigma}_{nk}, \qquad \wh{\bfSigma}_{nn} = n \wh{\bfSigma}_n
\]
between (\ref{DefSigmaHat}), (\ref{DefPSum1}) and (\ref{DefSNMK1}). 
$ D_k( \{ (\vecv_n^{(\rho)}, \vecw_n^{(\rho)} ) \} ) $ depends on all 
weights $ \{ v_{n\nu}^{(\rho)}, w_{n\nu}^{(\rho)}, \lambda_n, \mu_\rho : 1 \le \nu \le d_n, 1 \le \rho \le L_n, n \ge 1 \} $
but is measurable w.r.t $ \calG_k = \sigma( \vecY_{ni} : n \in \N, i \le k ) $. Now, for any sample size $ M $ we may consider the
associated \cadlag process associated to (\ref{DefQF})
\begin{equation}
\calD_{M}( t; \{ (\vecv_n^{(\rho)}, \vecw_n^{(\rho)} ) \} ) = M^{-1/2} D_{\trunc{Mt}}( \{ (\vecv_n^{(\rho)}, \vecw_n^{(\rho)} ) \} ),
\qquad t \in [0, 1].
\end{equation}

\subsection{Preliminaries}

Before proceeding, recall the following facts on the Hilbert space $ \ell_2 $ and strong approximations in Hilbert spaces. \chg{The space $ \ell_2 $ of sequences $ f = (f_j)_j $ with $ \sum_j f_j^2 < \infty $ is a separable Hilbert space when equipped with the inner product $ (f, g) = \sum_j f_j g_j $, $ f = (f_j)_j, g = (g_j)_j \in \ell_2 $, and the induced norm $ \| f \|_{\ell} = \sqrt{(f,f)} $. 
}{We shall denote the inner product of an arbitray Hilbert space by $ < \cdot, \cdot >$, the induced norm by$ | \cdot | $ and the operator semi-norm of an operator $ T : H \to H $ by $ \| T \|_{op} = \sup_{x \in H: |x|=1} | Tx | $. Our results take place in the Hilbert space $ \ell_2 $ of all sequences $ f = (f_j)_j $ with $ \sum_j f_j^2 < \infty $, which is a separable Hilbert space when equipped with the inner product $ (f, g) = \sum_j f_j g_j $, $ f = (f_j)_j, g = (g_j)_j \in \ell_2 $, and the induced norm $ \| f \|_{\ell_2} = \sqrt{(f,f)} $. The associated operator norm of an operator $ T : \ell_2 \to \ell_2 $ is simply denoted by $ \| T \| $. For two random variables $ X, Y $ defined on $ (\Omega, \calF, P) $ with $ E(X^2), E(Y^2) < \infty $, we denote the inner product by $ (X,Y)_{L_2} $, where $ L_2 = L_2(\Omega, \calF, P) $.}

Sufficient conditions for a strong approximation of partial sums of dependent random elements taking values in a separable Hilbert space require the control of the associated conditional covariance operator. Denote the underlying
probability space by $ ( \Omega, \calF, P ) $. Let $ X = (X_j)_j $ be a random element defined on $ (\Omega, \calF, P ) $ taking values in $ \ell_2 $ with $ E \| X \|_{\ell_2}^2 = \sum_j E(X_j^2) < \infty $. The covariance operator $ C_X : \ell_2 \to \ell_2 $ associated to $ X $ is defined by 
\[
  C_X( f ) = E[ (f,X) X ] = \left( \sum_j f_j E( X_j X_k ) \right)_k, \qquad f = (f_j)_j \in \ell_2.
\]
To any $ \sigma $-field $ \calA $ we may associate the conditional covariance operator of $X$ given $ \calA $,
\begin{equation}
\label{DefCondCovOp}
  C_X(f|\calA) = E[ (f,X) X | \calA ] = \left( \sum_j f_j E( X_j X_k | \calA ) \right)_k, \qquad f = (f_j)_j \in \ell_2.
\end{equation}
Covariance operators are symmetric positive linear operators with operator norm
\begin{equation}
\label{DefOpNorm}
   \| C_X( \cdot | \calA ) \| = \sup_{f \in \ell_2: \| f \|_{\ell_2} = 1 } \| ( C_X(f), f ) \|_{\ell_2}.
\end{equation}
For further properties and discussion see \cite{Bosq2000}. 

A strong invariance principle in $ \ell_2 $ deals with the a.s. approximation of partial sums of $ \ell_2 $--valued random elements by a Brownian motion in $ \ell_2 $. Recall that a random element $ B = \{ B(t) : t \in [0,1] \} $ with values in $ C([0,1], \ell_2 ) $ is called Brownian motion in $ \ell_2 $, if
\begin{itemize}
  \item[(i)] $ B(0) = 0 \in \ell_2 $,
  \item[(ii)] for all $ 0 \le t_1 < \cdots < t_n \le 1 $ the increments $ B(t_{i+1}) - B(t_i) $, $ i = 1, \dots, n-1 $, are
  independent and
  \item[(iii)] for all $ 0 \le s, t \le 1 $ the increment $ B(t)-B(s) $ is Gaussian with mean $0$ and covariance operator
  $  \min(s,t) K $ for some nonnegative linear and self-adjoint operator $K$ on $ \ell_2 $ such that
  $ \sum_{i=1}^\infty ( K e_i, e_i ) < \infty $, where $ \{ e_i \} $ is some orthonormal system for $ \ell_2 $.
\end{itemize}
If $ K = C_X $ for some random element $X$, $B$ is the Brownian motion generated by $X$.  The definition for a general separable Hilbert space is analogous.

A strong invariance principle or strong approximation for a sequence $ \zeta_1, \zeta_2, \dots $ of random elements taking values in an arbitrary separable Hilbert space  $H$ with inner product $ < \cdot, \cdot > $ and induced norm $ | \cdot | $ asserts that they can be redefined on a rich enough probability space such that there exists
  a Brownian motion $ B $ with values in $H$ and covariance operator $ C_\zeta $ such that, a.s.,
  \begin{equation}
  \label{SIP1}
    \left| \sum_{j \le t} \zeta_j - B(t) \right|
     \le c t^{1/2-\lambda},
  \end{equation}
  for  constants $ \lambda > 0 $ and $ c < \infty $, if the dimension of $H$ is finite, and,
  \begin{equation}
  \label{SIP2}
      \left| \sum_{j \le t} \zeta_j - B(t) \right| = o( \sqrt{t \log( \log t ) } ),
  \end{equation}
  as $ t \to \infty $, a.s., if $H$ is infinite dimensional.

Throughout the paper we write, for two arrays $ \{ a_{n',m'} \} $ and $ \{ b_{n',m'} \} $ of real numbers, $ a_{n',m'} \stackrel{n',m'}{\ll} b_{n',m'} $, if there exists a constant $ c $ such that  $ a_{n',m'} \le c b_{n',m'} $ for all $ n', m' $.

\subsection{Large sample approximations}

We aim at showing a strong approximation for the $ D([0,1]; \ell_2) $-valued processes 
\[
  \mathcal{D}_n = ( \mathcal{D}_{nj} )_{j = 1}^{L_n},  \qquad n \ge 1,
\]
where the coordinate processes $ \calD_{nj} $ are given by
\[
  \mathcal{D}_{nj}(t) = \frac{1}{\sqrt{n L_n}} D_{n, \trunc{nt}}( \vecv_n^{(j)}, \vecw_n^{(j)} ), \qquad t \in [0,1], \quad n \ge 1,
\]
with $ D_{nk}( \vecv_n^{(j)}, \vecw_n^{(j)} ) = \vecv_n^{(j)}{}'( \wh{\bfSigma}_{nk} - \bfSigma_{nk} ) \vecw_n^{(j)} $
for $ j = 1, \dots, L_n $, $n \ge 1 $, cf. (\ref{DefDNK})  and (\ref{DefDNJ}). The above processes can be expressed as partial sums.

\begin{lemma}
	\label{LemmaRepr}
We have the representation
\begin{equation}
\label{ReprSum}
  D_{nk} = \sum_{i=1}^k \xi_i^{(n)},
\end{equation}
for $ k = 1, \dots, n $, $ n \ge 1 $, leading to
\begin{equation}
  \mathcal{D}_{n}(t) = \frac{1}{\sqrt{n L_n}} \sum_{i=1}^{ \trunc{nt} } \xi_i^{(n)}, \qquad t \in [0,1], n \ge 1,
\end{equation}
where the random elements $  \xi_i^{(n)} $ are defined in (\ref{DefDLN}).
\end{lemma}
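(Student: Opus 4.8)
The plan is to prove this by simply unwinding the definitions, since the lemma is a representation of the centered bilinear forms as partial sums over the time index. Recall that $D_{nk}$ denotes the $\R^{L_n}$-valued (equivalently, after padding with zeros, the $\ell_2$-valued) object whose $j$th coordinate is the scalar $D_{nk}(\vecv_n^{(j)}, \vecw_n^{(j)}) = \vecv_n^{(j)}{}'(\wh{\bfSigma}_{nk} - \bfSigma_{nk})\vecw_n^{(j)}$ from (\ref{DefDNK}). The essential observation is that both matrix-valued partial sums (\ref{DefPSum1}) and (\ref{DefPSum2}) are sums over the same time index $i$ running from $1$ to $k$; in vector notation, $\wh{\bfSigma}_{nk} = \sum_{i=1}^k \vecY_{ni}\vecY_{ni}'$ and $\bfSigma_{nk} = \sum_{i=1}^k E(\vecY_{ni}\vecY_{ni}')$. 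Hence the centering may be carried out term by term inside the sum over $i$.

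The key step is then to factor each summand. For fixed $j$ and fixed $i$, the quadratic expression decomposes into a product of two linear forms of the data vector,
\[
  \vecv_n^{(j)}{}'\bigl(\vecY_{ni}\vecY_{ni}'\bigr)\vecw_n^{(j)} = \bigl(\vecv_n^{(j)}{}'\vecY_{ni}\bigr)\bigl(\vecw_n^{(j)}{}'\vecY_{ni}\bigr),
\]
which is legitimate because the coordinate sums defining the weighting vectors are finite, so interchanging them with the time sum is immediate. Subtracting the expectation coordinatewise, the $j$th coordinate of $D_{nk}$ equals $\sum_{i=1}^k \bigl[(\vecv_n^{(j)}{}'\vecY_{ni})(\vecw_n^{(j)}{}'\vecY_{ni}) - E\{(\vecv_n^{(j)}{}'\vecY_{ni})(\vecw_n^{(j)}{}'\vecY_{ni})\}\bigr]$, a sum over $i$ of a centered scalar.

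Stacking these $L_n$ coordinates into a single vector produces exactly the random element $\xi_i^{(n)}$ defined in (\ref{DefDLN}), so that $D_{nk} = \sum_{i=1}^k \xi_i^{(n)}$, which is (\ref{ReprSum}); the second display follows at once by dividing through by $\sqrt{nL_n}$ and setting $k = \trunc{nt}$, cf. the definition of $\calD_n(t)$. I do not expect any genuine analytic obstacle here, as the statement is a bookkeeping identity rather than a limit theorem. The only points that warrant a line of care are that centering commutes with the coordinatewise factorization — which is guaranteed because $\wh{\bfSigma}_{nk}$ and $\bfSigma_{nk}$ share the time index — and, in the $\ell_2$ framework, that each $\xi_i^{(n)}$ genuinely belongs to $\ell_2$, which is clear since it has only $L_n$ nonzero entries and may be viewed as padded with zeros.
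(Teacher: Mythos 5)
Your proposal is correct and follows essentially the same route as the paper: the paper's proof likewise reduces the identity to the single-form representation $D_{nk}(\vecv_n,\vecw_n)=\sum_{i\le k}\bigl[Y_{ni}(\vecv_n)Y_{ni}(\vecw_n)-E\{Y_{ni}(\vecv_n)Y_{ni}(\vecw_n)\}\bigr]$ (cited from the earlier work of Steland and von Sachs, where it is obtained by exactly your term-by-term centering and factorization of the bilinear form) and then stacks the $L_n$ coordinates into the $\ell_2$-valued elements $\xi_i^{(n)}$ of (\ref{DefDLN}). The only cosmetic difference is that you carry out the algebra explicitly where the paper cites it, together with the (correctly noted) interchange of the finite coordinate sum with the infinite lag sum needed to identify $\vecv_n^{(j)}{}'\vecY_{ni}$ with the aggregated linear process $Y_{ni}(\vecv_n^{(j)})$.
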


To introduce the conditional covariance operators associated to $ \calD_n $, denote the filtration $ \calF_m = \sigma( \epsilon_i : i \le m ) $, $ m \in \Z $, and define
\[
  \calC^{(n)}( f | \calF_0 ) = E[ (f, \mathcal{D}_n(1) ) \mathcal{D}_n(1) \, | \, \calF_0 ],
  \qquad f \in \ell_2, n \ge 1.
\]
Let us also introduce the unconditional covariance operator
\[
  \calC^{(n)}( f ) = E[ (f, Z_n ) Z_n ], \qquad f \in \ell_2,
\]
where $ Z_n = ( Z_{nj} )_{j = 1}^{L_n} $, $ n \ge 1 $, with random variables $ Z_{nj} $, $ 1 \le j \le L_n $, satisfying
 $ E( Z_{nj} ) = 0 $ and 
\[ 
  E( Z_{n j} Z_{n k} ) = L_n^{-1} \beta_n^2(j, k),
\] 
for $ j, k = 1, \dots, L_n $. Here 
\begin{equation}
\label{BetaJK}
  \beta_n^2(j,k) = \beta_n^2( \vecv_n^{(j)}, \vecw_n^{(j)}, \vecv_n^{(k)}, \vecw_n^{(k)} )
\end{equation}
are the quantities introduced in (\ref{BetaNuMu}), the asymptotic covariance parameters of the bilinear forms corresponding to the
pairs $ ( \vecv_n^{(j)}, \vecw_n^{(j)} ) $ and $ (\vecv_n^{(k)}, \vecw_n^{(k)}) $, $ 1 \le j,k \le L_n $.

The following technical but crucial result establishes the convergence of $ \calC^{(n)}( \cdot | \calF_0 ) - \calC^{(n)}( \cdot ) $ in the operator semi--norm in expectation and provides us with a convergence rate.

\begin{theorem} 
\label{ThConvCovOp}
Suppose $ (\vecv_n^{(j)}, \vecw_n^{(j)} ) $, $ j = 1, \dots, L_n $,
have uniformly bounded $ \ell_1 $-norms,
\[
  \sup_{n \ge 1} \max_{1 \le j \le L_n} \max\{ \| \vecv_n^{(j)} \|_{\ell_1}, \| \vecw_n^{(j)} \|_{\ell_1} \} \le C < \infty,
\]
for some constant $ C $. Let 
\begin{equation}
\label{DefSnm}
 S_{n', m'}^{(n)} = \frac{1}{\sqrt{n' L_n}} \sum_{k=m'+1}^{m'+n'} \xi_k^{(n)}, \qquad
 m', n' \ge 1,  n \ge 1,
\end{equation} 
with $ \xi_k^{(n)} $ defined by (\ref{DefDLN}). Define
\[
  \calC^{(n)}_{n',m'}( f | \calF_{m'} )
  = E[ (f, S_{n', m'}^{(n)} ) S_{n', m'}^{(n)} | \calF_{m'} ],
\]
for $ f \in \ell_2 $. Then
\[
  E \| \calC^{(n)}_{n',m'}( \cdot | \calF_m ) 
   - \calC^{(n)}( \cdot ) \| 
   \stackrel{n',m'}{\ll} (n')^{-\theta/2},
\]
where $ \| \cdot \| $ denotes the operator norm defined in (\ref{DefOpNorm}).
\end{theorem}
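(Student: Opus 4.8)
The plan is to reduce the operator-norm statement to a collection of scalar second-moment estimates for the entries of an $L_n \times L_n$ covariance matrix, and then to extract the rate $(n')^{-\theta/2}$ from the coefficient decay in Assumption (A). Since both $S_{n',m'}^{(n)}$ and $Z_n$ are supported on the first $L_n$ coordinates of $\ell_2$, the operators $\calC^{(n)}_{n',m'}(\cdot\,|\calF_{m'})$ and $\calC^{(n)}(\cdot)$ annihilate the orthogonal complement of $\R^{L_n}$, so their difference is the self-adjoint finite-rank operator represented by the symmetric matrix $M=(M_{jj'})_{1\le j,j'\le L_n}$ with
\[
  M_{jj'} = E\big[(S_{n',m'}^{(n)})_j (S_{n',m'}^{(n)})_{j'}\,\big|\,\calF_{m'}\big] - L_n^{-1}\beta_n^2(j,j').
\]
For a self-adjoint operator the norm in (\ref{DefOpNorm}) is the spectral norm of $M$, which is dominated by its Frobenius norm; hence $\|\calC^{(n)}_{n',m'}(\cdot\,|\calF_{m'}) - \calC^{(n)}(\cdot)\| \le (\sum_{j,j'} M_{jj'}^2)^{1/2}$. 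Taking expectations and moving $E$ through the square root by Jensen's inequality, it suffices to prove the entrywise bound $E(M_{jj'}^2) \ll L_n^{-2}(n')^{-\theta}$, uniformly in $j,j',m',n$, since summing the $L_n^2$ entries returns $(n')^{-\theta}$ and the square root yields $(n')^{-\theta/2}$.

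Next I would expand each entry using the partial-sum representation of Lemma~\ref{LemmaRepr}. By (\ref{DefSnm}),
\[
  E\big[(S_{n',m'}^{(n)})_j (S_{n',m'}^{(n)})_{j'}\,\big|\,\calF_{m'}\big]
  = \frac{1}{n'L_n} \sum_{k,k'=m'+1}^{m'+n'} E\big[(\xi_k^{(n)})_j (\xi_{k'}^{(n)})_{j'}\,\big|\,\calF_{m'}\big],
\]
and I would split $M_{jj'}$ into a deterministic bias $E[(S_{n',m'}^{(n)})_j (S_{n',m'}^{(n)})_{j'}] - L_n^{-1}\beta_n^2(j,j')$ and a centered random part $E[(S_{n',m'}^{(n)})_j (S_{n',m'}^{(n)})_{j'}\mid\calF_{m'}] - E[(S_{n',m'}^{(n)})_j (S_{n',m'}^{(n)})_{j'}]$. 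The bias measures the gap between the block-averaged covariance of the $\xi$-process and its long-run value $\beta_n^2(j,j')/L_n$ from (\ref{BetaJK}) and (\ref{BetaNuMu}); I would control it by reindexing the double time-sum by the lag $\ell=k-k'$, identifying the long-run covariance with the full series over all lags, and bounding the remaining edge and tail contributions. Because each $(\xi_k^{(n)})_j$ is a centered quadratic form in the innovations with coefficients $v_{n\nu}^{(j)} w_{n\mu}^{(j)}$, its lag-$\ell$ autocovariances are governed by products $c_{n,a}^{(\nu)} c_{n,a+\ell}^{(\mu)}$; Assumption (A), i.e. $|c_{nj}^{(\nu)}|^2 \ll j^{-3/2-\theta}$, together with the uniform $\ell_1$-bound on the weights, makes these terms summable with a tail of order $(n')^{-\theta/2}$, exactly the order of the bias.

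For the centered random part I would use that conditioning on $\calF_{m'}=\sigma(\epsilon_i:i\le m')$ alters $E[(\xi_k^{(n)})_j(\xi_{k'}^{(n)})_{j'}\mid\calF_{m'}]$ only through the innovations with $i \le m'$ entering the representations of $Y_k,Y_{k'}$; since $k,k' > m'$, those innovations are weighted by coefficients $c_{n,k-i}$ with $k-i \ge k-m' \ge 1$, whose squared size decays like $(k-m')^{-3/2-\theta}$. Bounding the conditional fluctuation in $L_2$ then reduces to moment estimates for products of centered quartic expressions in the innovations, which are finite and uniformly bounded thanks to $E|\epsilon_k|^{4+\delta}<\infty$ and the assumed convergence of $n^{-1}\sum_i E(\epsilon_i^r)$, $r=2,3,4$; once more the $\ell_1$-control of the weights keeps the sums over the ambient dimension $d_n$ bounded independently of $d_n$ and $L_n$, and the coefficient decay converts the summation over $k,k'$ into the factor $(n')^{-\theta}$ after the $1/(n'L_n)^2$ prefactor. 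Combining the two pieces gives $E(M_{jj'}^2)\ll L_n^{-2}(n')^{-\theta}$, as required.

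The main obstacle is precisely this uniform $L_2$-control of the conditional covariance of the quartic $\xi$-forms: one must expand $(\xi_k^{(n)})_j(\xi_{k'}^{(n)})_{j'}$ into products of four linear-process factors, organize the resulting eighth-order innovation moments by their pairing and cumulant structure (noting that third-moment contributions genuinely survive, since $E(\epsilon_k^3)=0$ is not assumed), and check that every block of terms either cancels against the unconditional expectation or decays at rate $(n')^{-\theta/2}$. Keeping all constants uniform in $n$, $d_n$ and $L_n$—so that no constraint on the ratio of dimension to sample size is needed—is the delicate bookkeeping, and it is exactly here that the specific exponent $3/2+\theta$ in (\ref{SuffCond}) and the uniform $\ell_1$-bounds enter in an essential way.
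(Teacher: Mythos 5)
Your reduction of the operator norm to entrywise bounds is sound in outline (for a self-adjoint finite-rank operator the quadratic-form norm in (\ref{DefOpNorm}) is dominated by the Frobenius norm, and Jensen lets you pull the expectation inside the square root), but the step it rests on --- the entrywise second-moment bound $E(M_{jj'}^2) \ll L_n^{-2}(n')^{-\theta}$ --- is not available under the paper's assumptions, and this is a genuine gap, not bookkeeping. The entry $M_{jj'}$ is, up to centering, a conditional expectation of a quartic form in the innovations: writing $Y_{nk}(\vecv) = P_k(\vecv) + F_k(\vecv)$ with $P_k$ the $\calF_{m'}$-measurable part, the expansion of $E[(\xi_k^{(n)})_j(\xi_{k'}^{(n)})_{j'}\mid\calF_{m'}]$ contains the fully past-measurable blocks $P_k(\vecv_n^{(j)})P_k(\vecw_n^{(j)})P_{k'}(\vecv_n^{(j')})P_{k'}(\vecw_n^{(j')})$, which are polynomials of degree four in $\{\epsilon_i : i \le m'\}$. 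Squaring and taking expectations therefore produces diagonal terms proportional to $E(\epsilon_a^8)$ --- exactly the ``eighth-order innovation moments'' you mention --- and the model only assumes $E|\epsilon_k|^{4+\delta} < \infty$ for some $\delta > 0$. For innovations with tails of index between $4+\delta$ and $8$ these terms are infinite, so $E(M_{jj'}^2)$ need not even be finite; no amount of coefficient decay from Assumption (A) can repair an infinite moment factor. Your phrase that these expressions are ``finite and uniformly bounded thanks to $E|\epsilon_k|^{4+\delta}<\infty$'' is precisely where the argument breaks.

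The paper's proof circumvents this by never squaring conditional covariances. It first replaces the scaled partial sums $T_{n',m'}^{(n)}(\nu)$ by the Kouritzin-type martingale approximations $M_{n',m'}^{(n)}(\nu)$ of (\ref{DefMartingalesScaled}) and splits $E\|\calC^{(n)}_{n',m'}(\cdot|\calF_{m'}) - \calC^{(n)}(\cdot)\|$ into $\Xi_{n',m'}$ (partial sums versus martingales) and $\Psi_{n',m'}$ (martingales versus limit). Both pieces are controlled through \emph{entrywise $L_1$ bounds of the form} $E\sup_{\nu,\mu}|C_{\nu\mu}| \ll L_n^{-1}(n')^{-\theta/2}$, obtained via the conditional Cauchy--Schwarz inequality from Lemma~\ref{Lemma_Martingale_Approx} and Lemma~\ref{HLemmaCov}; these require only conditional second moments of the quadratic summands, i.e. fourth moments of the innovations. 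The passage from entrywise bounds to the operator norm is then made not through the Frobenius norm but through $\sup_{\|f\|_{\ell_2}=1}\sum_{\nu,\mu}|f_\nu f_\mu| \le L_n$, using $\sum_{\nu=1}^{L_n}|f_\nu| \le L_n^{1/2}\|f\|_{\ell_2}$, so the factor $L_n$ cancels the $L_n^{-1}$ in the entrywise rate. If you want to keep your direct (non-martingale) computation, you would either have to strengthen the moment assumption to $E|\epsilon_k|^{8} < \infty$, or restructure your ``centered random part'' estimate so that it, too, is an $L_1$ bound on a supremum of entries combined with the $\ell_1$--$\ell_2$ inequality rather than an $L_2$ bound entering a Frobenius norm.
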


We are now in a position to formulate the first main result on the large sample approximations of $ L_n $ bilinear forms when $ L_n $ converges to infinity, in terms of the $ \ell_1 $-- as well as the $ \ell_2 $--norm. The  results holds true under the weak assumption that the weighting vectors have uniformly bounded $ \ell_1 $ norm.


\begin{theorem} 
\label{Th_Many_QFs}
Let $ \vecY_{ni} $, $ 1 \le i \le n$, be a vector time series following model (\ref{ModelLinearProcess}) and satisfying Assumption (A).  Suppose that $ ( \{ \vecv_\ell^{(1)} \}_{\ell=1}^\infty, \{ \vecw_\ell^{(1)} \}_{\ell=1}^\infty ), \dots, ( \{ \vecv_n^{(L_n)} \}_{\ell=1}^\infty, \{ \vecw_n^{(L_n)} \}_{\ell=1}^\infty )  \in \calW \times \calW $, $ n \ge 1 $, have uniformly bounded $ \ell_1 $-norm, i.e.
\[
  \sup_{n \ge 1} \max_{1 \le j \le L_n} \max\{ \| \vecv_n^{(j)} \|_{\ell_1}, \| \vecw_n^{(j)} \|_{\ell_1} \} \le C,
\]
for some constant $ C < \infty $.
Then all processes can be redefined on a rich enough probability space, such that there exists, for each $n$, a Brownian motion of dimension $ L_n $,
\[ 
  B_n(t) = B_n(t; \{ ( \vecv_n^{(j)}, \vecw_n^{(j)} ) : 1 \le j \le L_n \} ), \qquad t \ge 0,
\] 
with coordinates $ B_n(t)_j $, $ j = 1, \dots, L_n $, and covariance function given by
\begin{equation}
\label{CovariancesBB}
  E( B_n(s)_j B_n(t)_k ) = \min(s,t) L_n^{-1} \beta_n^2(j,k),
\end{equation}
for $ 1 \le j, k \le L_n $ and $ s, t \ge 0 $, such that the following assertions hold true.
\begin{itemize}
  \item[(i)]  In the Euclidean space $ \R^{L_n} $ we have the strong approximation
  \begin{equation}
  \label{StrongApprox Ln}
    \| L_n^{-1/2} D_{nt} - B_n(t) \|_{\R^{L_n}} =
    \left\|  \sum_{i=1}^t L_n^{-1/2} \xi_i^{(n)} - B_n(t) \right\|_{\R^{L_n}} \le C_n t^{-1/2-\lambda_n},
  \end{equation}
  a.s, for  constants $ C_n < \infty $ and $ \lambda_n > 0 $, where $ \lambda_n $ depends only on $ L_n $, $ \delta $ and $ \theta $. 
\end{itemize}
Provided
\begin{equation}
\label{CrucialCond}
  C_n n^{-\lambda_n} = o(1),
\end{equation}
as $ n \to \infty $, the following assertions hold.
\begin{itemize}
\item[(ii)] With respect to the $ \ell_2 $--norm we have
\[
  \sup_{t \in [0,1]} \sum_{j=1}^{L_n} \left| \calD_{nj}(t) - \overline{B}_n\left( \frac{ \trunc{nt} }{ n } \right)_j \right|^2 
  = o(1)
\]
as $ n \to \infty $, a.s., for the $ [0,1] $--version $ \overline{B}_n $ of $ B_n $.
\item[(iii)] With respect to the $ \ell_1 $--norm we have
\[
 \sup_{t \in [0,1]} \frac{1}{\sqrt{L_n}} \sum_{j=1}^{L_n} \left| 
  \calD_{nj}(t) -  \overline{B}_n\left( \frac{\trunc{nt}}{n} \right)_j \right| 
  = o(1),
\]
as $n \to \infty $, a.s., for the $ [0,1] $--version $ \overline{B}_n $ of $ B_n $, and with respect
to the maximum norm
\[
  \sup_{t \in [0,1]} \max_{j \le L_n} \left| \calD_{nj}(t) - \overline{B}_n\left( \frac{ \trunc{nt} }{ n } \right)_j \right| = o(1),
\]
as $ n \to \infty $, a.s..
\item[(iv)]
  Let $ \{ \lambda_n : n \in \N \} $ and $ \{ \mu_\rho : \rho \in \N \} $ 
  be $\ell_1 $--weights. Then there exist constants $ \lambda > 0 $ and $C < \infty $ and $ \alpha(  \{ \lambda_n \}_{n=1}^\infty, \{ \mu_ \rho \}_{\rho=1}^\infty, \{ (\vecv_n^{(\rho)}, \vecw_n^{(\rho)} ) \}_{n,\rho=1}^\infty ) \ge 0 $, such that for equivalent versions and a standard Brownian motion $B$ on $ [0, \infty) $, defined on a new probability space, 
  \begin{equation}
  \label{WSIP1}
    | D_t( \{ (\vecv_n^{(\rho)}, \vecw_n^{(\rho)} ) \} ) - 
    \alpha(  \{ \lambda_n \}_{n=1}^\infty, \{ \mu_ \rho \}_{\rho=1}^\infty, \{ (\vecv_n^{(\rho)}, \vecw_n^{(\rho)} ) \}_{n,\rho=1}^\infty ) B(t) |
    \le C t^{1/2-\lambda},
  \end{equation}
  a.s., for all $ t > 0 $. Further, for any sample size $M$
  \begin{equation}
  \label{WSIP2}
  \sup_{t \in [0,1]} | \calD_M( t;  \{ (\vecv_n^{(\rho)}, \vecw_n^{(\rho)} ) \} )
  - \alpha(  \{ \lambda_n \}_{n=1}^\infty, \{ \mu_ \rho \}_{\rho=1}^\infty, \{ (\vecv_n^{(\rho)}, \vecw_n^{(\rho)} ) \}_{n,\rho=1}^\infty )  \overline{B}_M(t) |
  \le C M^{-\lambda},
  \end{equation} 
  a.s., for the $ [0,1] $--version $ \overline{B}_M$ of $B$.
\end{itemize}
\end{theorem}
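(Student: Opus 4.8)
The plan is to establish the four assertions in order, using Lemma~\ref{LemmaRepr} and Theorem~\ref{ThConvCovOp} as the analytic core for the finite-dimensional strong approximation, deducing (ii) and (iii) as norm-comparison corollaries, and reducing the $\ell_1$-weighted double sum of assertion~(iv) to the single-form result of Theorem~\ref{Th_Basic}. For (i), fix $n$. Because $L_n<\infty$, the scaled partial sums $\sum_{i=1}^t L_n^{-1/2}\xi_i^{(n)}$ are partial sums of a weakly dependent sequence taking values in the \emph{finite}-dimensional Hilbert space $\R^{L_n}$, so I would invoke the strong invariance principle (\ref{SIP1}), whose sufficient condition is control of the conditional covariance operators. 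That condition is supplied verbatim by Theorem~\ref{ThConvCovOp}: the block operators $\calC^{(n)}_{n',m'}(\cdot\mid\calF_{m'})$ converge in expected operator norm to the stationary operator $\calC^{(n)}$ at the polynomial rate $(n')^{-\theta/2}$. The remaining hypotheses, namely uniform moment bounds of order exceeding two on the $\xi_i^{(n)}$, follow from Assumption (A) together with $E|\epsilon_k|^{4+\delta}<\infty$, exactly as in the single-form estimates behind Theorem~\ref{Th_Basic}. Feeding the covariance rate and the moment order into the rate formula of the Hilbert-space SIP produces constants $C_n<\infty$ and an exponent $\lambda_n>0$ depending only on $L_n,\delta,\theta$, together with a Brownian motion $B_n$ whose covariance operator is $\calC^{(n)}$; by the definition of $\calC^{(n)}$ through $Z_n$ this operator has precisely the covariance function (\ref{CovariancesBB}). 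This gives (\ref{StrongApprox Ln}).

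Assertions (ii) and (iii) are then corollaries of (i) under (\ref{CrucialCond}). Writing $\overline B_n(\trunc{nt}/n)=n^{-1/2}B_n(\trunc{nt})$ and $\calD_{nj}(t)=n^{-1/2}(L_n^{-1/2}D_{n,\trunc{nt}})_j$, the Euclidean error at time $t$ equals $n^{-1/2}$ times the left-hand side of (\ref{StrongApprox Ln}) evaluated at $\trunc{nt}\le n$, so
\[
  \sup_{t\in[0,1]}\sum_{j=1}^{L_n}\Big|\calD_{nj}(t)-\overline B_n(\trunc{nt}/n)_j\Big|^2
  \le n^{-1}C_n^2\,n^{1-2\lambda_n}=\big(C_n n^{-\lambda_n}\big)^2=o(1),
\]
which is (ii). For (iii) I would only use the elementary inequalities $L_n^{-1/2}\sum_{j\le L_n}|a_j|\le\big(\sum_{j\le L_n}|a_j|^2\big)^{1/2}$ (Cauchy--Schwarz) and $\max_{j\le L_n}|a_j|\le\big(\sum_{j\le L_n}|a_j|^2\big)^{1/2}$, both applied to $a_j=\calD_{nj}(t)-\overline B_n(\trunc{nt}/n)_j$ and combined with the bound just derived for (ii).

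For (iv) I would collapse the $\ell_1$-weighted double sum into a single scalar quadratic form. Since $\{\lambda_n\}$ and $\{\mu_\rho\}$ are $\ell_1$ and the vectors are uniformly $\ell_1$-bounded, the aggregated processes
\[
  U_i=\sum_{n}\lambda_n\sum_{\rho}\mu_\rho\,\vecv_n^{(\rho)}{}'\vecY_{ni},\qquad
  V_i=\sum_{m}\lambda_m\sum_{\sigma}\mu_\sigma\,\vecw_m^{(\sigma)}{}'\vecY_{mi}
\]
are well defined and, after interchanging the (absolutely convergent) sums, are scalar linear processes $U_i=\sum_{j\ge0}a_j\epsilon_{i-j}$, $V_i=\sum_{j\ge0}b_j\epsilon_{i-j}$ in the common innovations. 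The coefficients inherit the decay of Assumption (A): e.g. $|a_j|\le(\sum_n\lambda_n)(\sum_\rho\mu_\rho)\,C\,\sup_{n,\nu}|c_{nj}^{(\nu)}|$, whence $|a_j|^2\ll j^{-3/2-\theta}$, and likewise for $b_j$. By (\ref{DefQF})--(\ref{DefSNMK2}) one has exactly $D_k(\{(\vecv_n^{(\rho)},\vecw_n^{(\rho)})\})=\sum_{i\le k}\big(U_iV_i-E(U_iV_i)\big)$, i.e. the centered bilinear form of the bivariate linear process $(U_i,V_i)'$ with selection vectors $(1,0)'$ and $(0,1)'$, which trivially have bounded $\ell_1$-norm. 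Theorem~\ref{Th_Basic} applied to this bivariate process then delivers (\ref{WSIP1}) with $\alpha$ equal to its asymptotic standard deviation, and (\ref{WSIP2}) follows by the same passage to the $[0,1]$-version as in (\ref{StrongApprox}).

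The main obstacle is assertion (i): everything hinges on a strong invariance principle valid in the growing-dimensional space $\R^{L_n}$ with an explicitly trackable rate, and on certifying that its covariance hypothesis holds. This is precisely the role of Theorem~\ref{ThConvCovOp}, so the substantive difficulty is essentially discharged there, leaving the bookkeeping of how $C_n$ and $\lambda_n$ depend on $L_n$. A secondary technical point is the Fubini/absolute-convergence justification in (iv) that makes $(U_i,V_i)$ a genuine bivariate linear process satisfying Assumption (A).
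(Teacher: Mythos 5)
Your overall architecture coincides with the paper's proof: you express the process as partial sums of $\zeta_k=L_n^{-1/2}\xi_k^{(n)}$ via Lemma~\ref{LemmaRepr}, verify the hypotheses of Philipp's Hilbert-space invariance principle in $\R^{L_n}$ (with Theorem~\ref{ThConvCovOp} supplying the covariance hypothesis and Assumption (A) plus the $4+\delta$ innovation moments supplying the moment hypothesis), deduce (ii) and (iii) from (i) under (\ref{CrucialCond}) by exactly the norm comparisons the paper uses, and prove (iv) by collapsing the weighted double sum into a single centered product of two scalar linear processes whose aggregated coefficients inherit the decay of Assumption (A); your appeal to Theorem~\ref{Th_Basic} for the resulting bivariate process is equivalent to the paper's direct appeal to Kouritzin's result.

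There is, however, one genuine gap in your treatment of (i). Philipp's theorem, as stated and used in the paper, has \emph{three} hypotheses, not two: (I) a $(2+\delta)$-moment bound on the summands, (II) the conditional-mean condition $E\,|E(S_{n'}(m')\mid\calF_{m'})|\stackrel{n',m'}{\ll}(n')^{-\varepsilon}$, and (III) convergence of the conditional covariance operators. You verify (I) and (III) but never address (II); indeed you describe the SIP's sufficient condition as being only the control of the conditional covariance operators. Condition (II) is not automatic for weakly dependent summands — it is exactly the point where the martingale approximation enters, since a priori the conditional mean of future partial sums given $\calF_{m'}$ need not be small. The paper verifies it as follows: because the approximating martingale differences have zero conditional mean, $E(S^{(n)}_{n',m'}(j)\mid\calF_{m'})=E(R^{(n)}_{n',m'}(j)\mid\calF_{m'})$ where $R^{(n)}_{n',m'}$ is the martingale-approximation remainder, and Lemma~\ref{Lemma_Martingale_Approx} gives $E\bigl[\sup_{1\le j}E\bigl((R^{(n)}_{n',m'}(j))^2\mid\calF_{m'}\bigr)\bigr]\stackrel{n',m'}{\ll}L_n^{-1}(n')^{-\theta}$, which after two applications of Jensen's inequality yields (II) with rate $(n')^{-\theta/2}$. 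Without this step (or an equivalent argument) the invocation of the invariance principle is incomplete; once it is added, your proof matches the paper's.
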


\begin{remark}
\label{RemarkBBConstr}
The Brownian motions can be constructed such that
\begin{equation}
\label{PropertyBB}
  \sup_{t \in [0,1]} \left| \overline{B}_n\left( \frac{ \trunc{nt} }{n} \right)_j  -  L_n^{-1/2} \alpha_n( \vecv_n^{(j)}, \vecw_n^{(j)} ) W_n\left( \frac{ \trunc{nt}}{n} ; \vecv_n^{(j)}, \vecw_n^{(j)} \right) \right| = o(1),
\end{equation}
as $ n \to \infty $, a.s., if (\ref{CrucialCond}) holds, where $ W_n(\cdot;  \vecv_n^{(j)}, \vecw_n^{(j)} ) $ is as in Theorem~\ref{Th_Basic}, for $ j = 1, \dots, L_n $.
\end{remark}


Due to assertion (iv) of the above theorem we may conjecture that (\ref{CrucialCond}) holds, cf. the discussion in \cite{StelandvonSachs2016}, but we have neither a proof nor a counterexample. The following result studies the relevant processes in the infinite--dimensional space $ \ell_2 $ and yields an approximation in probability taking into account the additional factor $ \log \log( n ) $.

\begin{theorem} 
\label{WeakApprox_ManyQFs}	
Suppose that the assumptions of Theorem~\ref{Th_Many_QFs} hold. In the Hilbert space $ \ell_2 $ we have the strong approximation
		  \begin{equation}
		  \label{StrongApprox ell_2} 
		  \| L_n^{-1/2} D_{nt} - B_n(t) \|_{\ell_2} =
		  \left\|  \sum_{k=1}^t L_n^{-1/2} \xi_k^{(n)} - B_n(t) \right\|_{\ell_2} = o( \sqrt{ t \log( \log t ) } ),   
		  \end{equation}
		  as $ t \to \infty $, a.s. There exists a sequence $ \{ \delta_n : n \ge 1 \} \subset \N $ such that with $ N = \lceil n \log \log(n) \rceil $
		  \begin{equation} 
		  \label{StrongApprox_HSpace 1}
		  \max_{\delta_n \le k \le n}  \left\| \frac{1}{\sqrt{N L_n}} \sum_{i=1}^k \xi_i^{(n)} - \overline{B}_N\left( \frac{k}{N} \right) \right\|_{\ell_2}
		  = o_P(1), 
		  \end{equation} 
		  $ n \to \infty $,  for the $ [0,1] $--version $ \overline{B}_N(t) = N^{-1/2}B_n( t N ) $. In other words,
		  \begin{equation}
		  \label{StrongApprox_HSpace 2}
		  \max_{\delta_n \le k \le n}  \left\| \sqrt{ \frac{n}{N} } \mathcal{D}_n \left( \frac{k}{n} \right) - \overline{B}_N\left( \frac{k}{N} \right) \right\|_{\ell_2}
		  = o_P(1),
		  \end{equation}
		  or equivalently
		  \begin{equation}
		  \label{StrongApprox_HSpace 3}
		  \sup_{ \frac{\delta_n}{n} \le t \le 1 }
		  \left\|
		  \sqrt{ \frac{n}{N} } \mathcal{D}_n(t) - \overline{B}_N\left( \frac{ \trunc{nt} }{ N } \right) 
		  \right\|_{\ell_2}
		  = o_P(1),
		  \end{equation}
		  as $ n \to \infty $.
\end{theorem}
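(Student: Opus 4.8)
The plan is to apply the infinite--dimensional strong invariance principle \eqref{SIP2} to the rescaled increments $\eta_i^{(n)} := L_n^{-1/2}\xi_i^{(n)}$, regarded as random elements of $\ell_2$ via the canonical embedding $\R^{L_n}\hookrightarrow\ell_2$, separately for each fixed $n$. By Lemma~\ref{LemmaRepr} the partial sums satisfy $L_n^{-1/2}D_{nt}=\sum_{i=1}^t\eta_i^{(n)}$, and since the coefficients $c_{nj}^{(\nu)}$ in \eqref{ModelLinearProcess} do not depend on the time index, $\vecY_{ni}$, and hence $\eta_i^{(n)}$, is well defined for every $i\ge1$; thus the idealized sequence $\{\eta_i^{(n)}\}_{i\ge1}$ to which \eqref{SIP2} refers is meaningful, even though the statistic only uses $i\le n$. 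The target Gaussian limit will be the Brownian motion with covariance operator $\calC^{(n)}$, i.e. with covariance function \eqref{CovariancesBB}, so that it coincides in law with the $B_n$ of Theorem~\ref{Th_Many_QFs}. In contrast to parts (ii)--(iv) of that theorem, this route does not require the unproven condition \eqref{CrucialCond}, at the price of the weaker $o_P$-conclusion and the extra $\log\log$ factor.

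Next I would verify the hypotheses of \eqref{SIP2}. The decisive condition is the control of the conditional covariance operator, which is precisely the content of Theorem~\ref{ThConvCovOp}: it yields $E\|\calC_{n',m'}^{(n)}(\cdot\,|\,\calF_{m'})-\calC^{(n)}(\cdot)\|\stackrel{n',m'}{\ll}(n')^{-\theta/2}$, a polynomial decay in the operator norm that suffices to invoke the principle. The remaining moment requirement, namely a $(2+\delta')$-th moment bound on $\|\eta_i^{(n)}\|_{\ell_2}$ for some $\delta'>0$, follows from Assumption~(A) together with the uniform $\ell_1$-bound on the weighting vectors, exactly as in Theorems~\ref{Th_Basic} and~\ref{ThConvCovOp}. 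Feeding these into \eqref{SIP2} produces, for each fixed $n$, a Brownian motion $B_n$ in $\ell_2$ with covariance operator $\calC^{(n)}$ and
\[
  \Bigl\|\sum_{i=1}^t\eta_i^{(n)}-B_n(t)\Bigr\|_{\ell_2}=o\bigl(\sqrt{t\log\log t}\bigr),\qquad t\to\infty,\ \text{a.s.},
\]
which is precisely \eqref{StrongApprox ell_2}.

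To pass from this per--$n$ almost sure statement to the rescaled in--probability assertions, set $R_n(k):=\bigl\|\sum_{i=1}^k\eta_i^{(n)}-B_n(k)\bigr\|_{\ell_2}$ and $M_n(\delta):=\sup_{k\ge\delta}R_n(k)/\sqrt{k\log\log k}$. For each fixed $n$ the display above gives $R_n(k)/\sqrt{k\log\log k}\to0$ a.s., whence $M_n(\delta)\to0$ a.s., and a fortiori in probability, as $\delta\to\infty$. A diagonal choice of $\delta_n\uparrow\infty$ (e.g. picking $\delta_n$ so that $P(M_n(\delta_n)>1/n)<1/n$) then forces $M_n(\delta_n)=o_P(1)$. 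Writing $N=\lceil n\log\log n\rceil\ge n\log\log n$ and using $\max_{k\le n}\sqrt{k\log\log k}=\sqrt{n\log\log n}$, one bounds
\[
  \max_{\delta_n\le k\le n}\frac{1}{\sqrt{N}}R_n(k)
  \le \frac{\sqrt{n\log\log n}}{\sqrt{N}}\,M_n(\delta_n)\le M_n(\delta_n)=o_P(1),
\]
which is \eqref{StrongApprox_HSpace 1} after noting that $\frac{1}{\sqrt{NL_n}}\sum_{i=1}^k\xi_i^{(n)}=N^{-1/2}\sum_{i=1}^k\eta_i^{(n)}$ and $\overline{B}_N(k/N)=N^{-1/2}B_n(k)$. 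The equivalent forms \eqref{StrongApprox_HSpace 2} and \eqref{StrongApprox_HSpace 3} follow at once from the scaling identity $\sqrt{n/N}\,\calD_n(k/n)=\frac{1}{\sqrt{NL_n}}\sum_{i=1}^k\xi_i^{(n)}$ (Lemma~\ref{LemmaRepr}) and the substitution $k=\trunc{nt}$.

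I expect the genuine obstacle to be matching the conditional covariance convergence supplied by Theorem~\ref{ThConvCovOp} to the exact form demanded by the Hilbert--space principle \eqref{SIP2} --- operator norm, in expectation, with a summable polynomial rate --- and securing the accompanying moment control on $\|\eta_i^{(n)}\|_{\ell_2}$ uniformly enough (for fixed $n$) from the model structure and the $\ell_1$-bound. A secondary but real subtlety is the diagonalization producing $\delta_n$: the almost sure control in \eqref{StrongApprox ell_2} holds only per $n$, with an implicit random threshold depending on $n$, so care is needed to upgrade it to a single $o_P(1)$ statement as $n\to\infty$; this is exactly what the normalization by $N=\lceil n\log\log n\rceil$ rather than by $n$ is designed to absorb.
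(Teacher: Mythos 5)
Your proposal is correct and follows essentially the same route as the paper: invoke Philipp's Hilbert-space invariance principle (conditions (I)--(III), already verified via Theorem~\ref{ThConvCovOp} and the moment bounds from the proof of Theorem~\ref{Th_Many_QFs}, noting the Euclidean and $\ell_2$ norms coincide under the embedding $\R^{L_n}\hookrightarrow\ell_2$) to get \eqref{StrongApprox ell_2}, then diagonalize over $n$ to produce $\delta_n$ and absorb the $\sqrt{k\log\log k}$ factor into the $\sqrt{N}$ normalization. Your $M_n(\delta_n)=o_P(1)$ construction is the same device as the paper's choice of $\delta_n$ with $P(\max_{\delta_n\le k'}\epsilon_{nk'}>\varepsilon)\le\eta_n$, just written with an explicit $1/n$ threshold.
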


The above result eliminates the condition (\ref{CrucialCond}), but we have no detailed information about the sequence $ \delta_n $. 

\chg{}{The question arises, whether the above results are limited to linear processes. As the main arguments deal with approximating martingales, we have the following result, which suggests that the class of vector time series to which the main results of this paper apply is larger.}

\chg{}{
\begin{theorem}
	 Let $ ( \{ \vecv_\ell^{(1)} \}_{\ell=1}^\infty, \{ \vecw_\ell^{(1)} \}_{\ell=1}^\infty ), \dots, ( \{ \vecv_n^{(L_n)} \}_{\ell=1}^\infty, \{ \vecw_n^{(L_n)} \}_{\ell=1}^\infty )  \in \calW \times \calW $, $ n \ge 1 $, be projection vectors with uniformly bounded $ \ell_1 $-norm, i.e.
	\[
	\sup_{n \ge 1} \max_{1 \le j \le L_n} \max\{ \| \vecv_n^{(j)} \|_{\ell_1}, \| \vecw_n^{(j)} \|_{\ell_1} \} \le C,
	\]
	for some constant $ C < \infty $. Let  $ \vecY_{ni} $, $ 1 \le i \le n $, be a $ d_n $-dimensional vector time series such that $ D_{nk}( \vecv_n^{(\nu)}, \vecw_n^{(\mu)} ) $ can be approximated by the martingales $ M_k^{(n)}( \vecv_n^{(\nu)}, \vecw_n^{(\mu)} ) $, $ k \ge 1 $, defined in (\ref{DefMartingales}) in $ L_2 $ with rate $ n^{-\theta} $,  $ \nu = 1, \dots, d_n $, for certain sequences of coefficients, $ c_{nj}^{(\nu)} $, $ j \ge 0 $, $ \nu  = 1, \dots, d_n $, satisfying Assumption (A) and a sequence of independent mean zero random variables $ \{ \epsilon_k : k \ge 0 \} $ with $ \sup_{k \ge 1} E ( \epsilon_k^4 ) < \infty $. If
	\[
	  \sup_{n \ge 1} \sup_{k \ge 1} \max_{1 \le j \le L_n} \max \{ E | \vecv_n^{(j)}{}' \vecY_{nk} |^{4+2\delta}, E | \vecw_n^{(j)}{}' \vecY_{nk} |^{4+2\delta} \} < \infty,
	\] 
	for some $ \delta > 0 $, then the results of this section still hold true. 
\end{theorem}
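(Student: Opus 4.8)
The plan is to re-examine the proofs of Theorems~\ref{ThConvCovOp}, \ref{Th_Many_QFs} and \ref{WeakApprox_ManyQFs} and to isolate the precise points at which the linear process representation (\ref{ModelLinearProcess}) is used. All three proofs share the same architecture: one first replaces the bilinear forms $ D_{nk}(\vecv_n^{(j)}, \vecw_n^{(j)}) $ by the associated martingales $ M_k^{(n)}(\vecv_n^{(j)}, \vecw_n^{(j)}) $ of (\ref{DefMartingales}), with increments related to the $ \xi_i^{(n)} $ of (\ref{DefDLN}), while controlling the remainder in $ L_2 $ at rate $ n^{-\theta} $; one then applies the strong approximation (\ref{SIP1})--(\ref{SIP2}) for Hilbert-space-valued martingales to $ M_k^{(n)} $, whose crucial input is the control of the conditional covariance operators supplied by Theorem~\ref{ThConvCovOp}; finally the $ L_2 $-small remainder is transferred back through the Euclidean, $ \ell_2 $- and $ \ell_1 $-norm estimates. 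My claim is that linearity enters \emph{only} in the construction of the martingale approximation, and that both this approximation at rate $ n^{-\theta} $ and the moment bounds it needs have been promoted to hypotheses.

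First I would verify that Theorem~\ref{ThConvCovOp} survives verbatim. Its statement and proof concern only the increments $ \xi_k^{(n)} $ of (\ref{DefDLN}), which are built from the innovations $ \{ \epsilon_k \} $ and the coefficients $ c_{nj}^{(\nu)} $; the bound $ E \| \calC^{(n)}_{n',m'}(\cdot | \calF_{m'}) - \calC^{(n)}(\cdot) \| \stackrel{n',m'}{\ll} (n')^{-\theta/2} $ uses exactly Assumption~(A), the fourth-moment bound $ \sup_k E(\epsilon_k^4) < \infty $, and the uniform $ \ell_1 $-bound on the weighting vectors. Since these three ingredients are retained by hypothesis, the conclusion of Theorem~\ref{ThConvCovOp} holds unchanged, and with it the identification of the limiting covariance structure (\ref{CovariancesBB}) and the moment/Lindeberg conditions needed to invoke (\ref{SIP1})--(\ref{SIP2}) for $ M_k^{(n)} $. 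Here the role formerly played by the linear-process evaluation of $ E | \vecv_n^{(j)}{}' \vecY_{nk} |^{4+2\delta} $ is taken over directly by the assumed uniform bound $ \sup_{n,k} \max_j E | \vecv_n^{(j)}{}' \vecY_{nk} |^{4+2\delta} < \infty $.

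It then remains to transfer the strong approximation of $ M_k^{(n)} $ back to $ D_{nk} $. By hypothesis the remainder $ R_{nk} = D_{nk} - M_k^{(n)} $ is $ O(n^{-\theta}) $ in $ L_2 $, and in each of the norm estimates of Theorem~\ref{Th_Many_QFs}(i)--(iv) and of Theorem~\ref{WeakApprox_ManyQFs} it enters only additively through partial sums. After the normalization $ 1/\sqrt{n L_n} $, its contribution is therefore controlled by combining the $ L_2 $-rate with a maximal inequality for the martingale part, exactly as in the linear case; for the in-probability statements (\ref{StrongApprox_HSpace 1})--(\ref{StrongApprox_HSpace 3}) this is immediate via Chebyshev, while for the almost-sure statements (\ref{StrongApprox Ln})--(\ref{WSIP2}) (under (\ref{CrucialCond})) one passes through a Borel--Cantelli argument along the normalized remainder.

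The main obstacle is precisely this line-by-line audit: one must confirm that nowhere do the proofs use the explicit expansion $ Y_{ni}^{(\nu)} = \sum_j c_{nj}^{(\nu)} \epsilon_{i-j} $ beyond what is needed to produce $ M_k^{(n)} $ and the rate $ n^{-\theta} $, and that every moment appearing in the argument is dominated by the single hypothesis $ \sup_{n,k} \max_j E | \vecv_n^{(j)}{}' \vecY_{nk} |^{4+2\delta} < \infty $ together with $ \sup_k E(\epsilon_k^4) < \infty $. The second, more delicate point is the upgrade from $ L_2 $-control of $ R_{nk} $ to the \emph{uniform} (in $ t $) and \emph{almost-sure} conclusions: it is here that the quantitative rate $ n^{-\theta} $, rather than mere $ L_2 $-smallness, is indispensable, since only a genuine rate permits the summability needed to close the a.s.\ estimates.
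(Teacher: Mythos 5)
The paper itself contains no written proof of this theorem---it is justified solely by the preceding remark that ``the main arguments deal with approximating martingales''---and your audit is exactly that justification made explicit: Philipp's condition (I) is now supplied directly by the assumed bound on $ E | \vecv_n^{(j)}{}' \vecY_{nk} |^{4+2\delta} $ (replacing the Kouritzin-style computation in (\ref{EstimateMomentsY})), condition (II) by the hypothesized martingale approximation together with the martingale property, and condition (III) by Theorem~\ref{ThConvCovOp}, whose proof splits into a part involving only the martingales of (\ref{DefMartingales}) (Lemma~\ref{HLemmaCov}, needing only Assumption (A), the innovation moments and the $\ell_1$-bounds) and a part comparing the data to the martingales, which is the promoted hypothesis. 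Your proposal is correct and takes essentially the intended route; the only cosmetic difference is that the paper applies Philipp's theorem directly to the partial sums of $ \xi_i^{(n)} $, using the martingales merely to verify (II)--(III), whereas you phrase it as approximating the martingales first and transferring the $L_2$-small remainder back, which amounts to the same estimates.
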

}

\subsection{Proofs}

\begin{proof}[Proof of Lemma~\ref{LemmaRepr}] 
	We argue as in the proof of Theorem~\ref{Th_Basic} given in  \cite{StelandvonSachs2016}, where
	it was shown that the partial sum (\ref{DefDNK}) associated to a single bilinear form $ Q( \vecv_n, \vecw_n ) $ attains the representation
	\begin{equation}
	\label{DefDnk}
	D_{nk}( \vecv_n, \vecw_n ) 
	=
	\sum_{i \le k} \xi_i^{(n)}( \vecv_n, \vecw_n ) 
	\end{equation}
	with Gaussian random variables 
	\begin{equation}
	\label{DefXi_1}
	\xi_i^{(n)}( \vecv_n, \vecw_n ) = Y_{ni}( \vecv_n ) Y_{ni}( \vecw_n ) - E( Y_{ni}( \vecv_n ) Y_{ni}( \vecw_n ) ), 
	\qquad i = 1, 2, \dots, n, \ n \ge 1,
	\end{equation}
	for linear processes
	\begin{align*}
	Y_{ni}{(\vecv_n)} &  = \sum_{j=0}^\infty c_{nj}^{(v)} \epsilon_{i-j}, \qquad
	Y_{ni}{(\vecw_n)} 
	= \sum_{j=0}^\infty c_{nj}^{(w)} \epsilon_{i-j}, \qquad i \in \Z, n \in \N,
	\end{align*}
	with coefficients
	\begin{equation}
	\label{DefCNWeights}
	c_{nj}^{(v)} = \sum_{\nu=1}^{d_n} v_\nu c_{nj}^{(\nu)},
	\qquad 
	c_{nj}^{(w)} = \sum_{\nu=1}^{d_n} w_\nu c_{nj}^{(\nu)},
	\end{equation}
	for $ j \ge 0 $ and $ n \ge 1 $. For $ L_n $ pairs of weighting vectors $ (\vecv_n^{(j)}, \vecw_n^{(j)} ) $, $ j = 1, \dots, L_n $, we consider 
	the corresponding partial sum process where the summands are the $ L_n $-dimensional vectors
	\begin{equation}
	\label{DefDLN}
	\xi_i^{(n)} = \left(  \xi_i^{(n)}( j ) \right)_{j=1}^{L_n}, \qquad
	\xi_i^{(n)}( j )  = \xi_i^{(n)}( \vecv_n^{(j)}, \vecw_n^{(j)} ), \ j = 1, \dots, L_n, 
	\end{equation}
	for $ i = 1, 2, \dots $, which we, however, also interpret as random elements taking values in $ \ell_2 $. This completes
	the proof.
\end{proof}

  \cite[Th. 1]{Philipp1986} asserts that (\ref{SIP1}) and (\ref{SIP2}), respectively, hold, if the following conditions for the scaled partial sums $ S_{n'}(m') = (n')^{-1/2} \sum_{k=m'+1}^{m'+n'} \zeta_k $, $ k, m', n' \ge 0 $,  are satisfied:
  \begin{itemize}
  	\item[(I)] $ \sup_{j \ge 1} E | \zeta_j |^{2+\delta} < \infty $ for some $ \delta > 0 $.
  	\item[(II)] For some $ \varepsilon > 0 $
  	\[
  	E | E( S_{n'}(m')) | \calF_{m'} ) | \stackrel{n',m'}{\ll} (n')^{-\varepsilon}.
  	\]
  	\item[(III)] There exists a covariance operator $ C $ such that the conditional
  	covariance operators
  	\[
  	C_{n'}(f|\calF_{m'}) = E[ (f, S_{n'}(m') ) S_{n'}(m') | \calF_{m'} ), \qquad f \in H,
  	\]
  	converge in the operator semi-norm $ \| \cdot \| $  to $ C(f) $ in expectation with rate $ (n')^{-\theta} $, i.e.
  	\[
  	E \| C_{n'}(\cdot| \calF_{m'} ) - C(\cdot) \| \stackrel{n',m'}{\ll} (n')^{-\theta},
  	\]
  	for some $ \theta > 0 $.
  \end{itemize}
  
  \chg{}{For a discussion of this result and extensions see \cite{Zhang2004}.} As shown by \cite{DehlingPhilipp1982}, 
  the strong invariance principle (\ref{SIP2}) also holds true for strictly stationary sequences taking values in a separable
  Hilbert space, which possess a finite moment of order $ 2+\delta' $, $ \delta' > 0 $, and are strong mixing with mixing coefficients
  satisfying $ \alpha(k) = O( k^{-(1+\varepsilon)(1+2/\delta')} ) $, for some $ \varepsilon > 0 $. The above conditions are, however, more
  convenient when studying linear processes. \cite{Wu2007} has studied strong invariance principles for a univariate nonlinear time series using the physical dependence measure, which is easy to verify for linear processes. Extensions to vector-valued time series (of fixed dimension) have been provided by \cite{LiuLin2009}. We rely on the conditions of \cite{Philipp1986}, since they allow to study time series of growing dimension and taking values in the infinite-dimensional space $ \ell_2 $ in a relatively straightforward way.

As a preparation for the proof of Theorem~\ref{ThConvCovOp}, we need the following lemma dealing with the uniform convergence of unconditional and conditional covariances of the approximating martingales defined by
\begin{equation}
\label{DefMartingales}
M_{m'}^{(n)}( \nu ) = \sum_{k=0}^{m'} \left\{
\wt{f}^{(n)}_{0,0}( \nu )( \epsilon_k^2 - \sigma_k^2 )
+ \epsilon_k \sum_{l=1}^\infty \wt{f}_{l,0}^{(n)}( \nu ) \epsilon_{k-l}
\right\}, \qquad m' \ge 0,
\end{equation}
where for brevity $ \wt{f}_{l,i}^{(n)}( \nu ) = \wt{f}_{l,i}^{(n)}( \vecv_n^{( \nu )}, \vecw_n^{(\nu)} ) $, $ l, i = 0, 1, 2, \dots $, $ \nu = 1, \dots, d_n $.

\begin{lemma}  
	\label{HLemmaCov}
	Under Assumption (A) we have
	\begin{equation}
	\label{ConvCovCondF_m}
	\sup_{n \ge 1}  \left\|  \sup_{1 \le \nu, \mu}  \left| E[ ( M_{m'+n'}^{(n)}( \nu ) - M_{m'}^{(n)}( \nu ) )
	( M_{m'+n'}^{(n)}( \mu ) - M_{m'}^{(n)}( \mu ) ) | \calF_{m'} ] - n' \beta_n(\nu, \mu) \right| \right\|_{L_1} \stackrel{n',m'}{<<} (n')^{1-\theta},
	\end{equation}
	which implies
	\begin{equation}
	\label{CovCondBounded}
	\sup_{n \ge 1} \sup_{n', m' \ge 1}
	(n')^{-1} \sup_{1 \le \nu} E[ ( M_{m'+n'}^{(n)}( \nu ) - M_{m'}^{(n)}( \nu ) )^2 | \calF_{m'} ] < \infty,
	\end{equation}
	a.s. Further,
	\begin{equation}
	\label{ConvCovUncond}
	(n')^{-1}  \sup_{n \ge 1} \sup_{1 \le \nu, \mu} 
	\left|
	E( M_{m'+n'}^{(n)}( \nu ) - M_{m'}^{(n)}( \nu ) )
	( M_{m'+n'}^{(n)}( \mu ) - M_{m'}^{(n)}( \mu ) ) - n' \beta_n( \nu, \mu )
	\right| \stackrel{n',m'}{<<} (n')^{-\theta},
	\end{equation}
	which implies
	\begin{equation}
	\label{CovUncondBounded}
	\sup_{n \ge 1} \sup_{n', m' \ge 1}
	(n')^{-1} \sup_{1 \le \nu} E[ ( M_{m'+n'}^{(n)}( \nu ) - M_{m'}^{(n)}( \nu ) )^2 ] < \infty,
	\end{equation}
\end{lemma}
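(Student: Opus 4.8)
The plan is to exploit the martingale-difference structure of the increments of \eqref{DefMartingales}. Write the $k$-th summand of $M_{m'}^{(n)}(\nu)$ as $g_k^{(n)}(\nu) = \wt f_{0,0}^{(n)}(\nu)(\epsilon_k^2-\sigma_k^2) + \epsilon_k A_k^{(n)}(\nu)$ with $A_k^{(n)}(\nu) = \sum_{l\ge 1}\wt f_{l,0}^{(n)}(\nu)\epsilon_{k-l}$. Since $E(\epsilon_k^2-\sigma_k^2)=0$ and $\epsilon_{k-l}$ is $\calF_{k-1}$-measurable for $l\ge 1$, one checks $E[g_k^{(n)}(\nu)\mid\calF_{k-1}]=0$, so $\{g_k^{(n)}(\nu)\}_k$ is a martingale difference sequence for $\{\calF_k\}$. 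Consequently, for $k\neq k'$ exceeding $m'$, conditioning on $\calF_{\max(k,k')-1}$ gives $E[g_k^{(n)}(\nu)g_{k'}^{(n)}(\mu)\mid\calF_{m'}]=0$, and likewise unconditionally. This reduces both covariances of $M_{m'+n'}^{(n)}(\nu)-M_{m'}^{(n)}(\nu)=\sum_{k=m'+1}^{m'+n'}g_k^{(n)}(\nu)$ to single sums $\sum_{k}E[g_k^{(n)}(\nu)g_k^{(n)}(\mu)\mid\calF_{m'}]$ and $\sum_k E[g_k^{(n)}(\nu)g_k^{(n)}(\mu)]$, with no cross-time terms to estimate.

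Next I would compute the per-summand covariances explicitly. By independence and $E[A_k^{(n)}(\mu)]=0$, the mixed terms carrying a single $\epsilon_k$ drop out, leaving the unconditional value $\wt f_{0,0}^{(n)}(\nu)\wt f_{0,0}^{(n)}(\mu)(E(\epsilon_k^4)-\sigma_k^4) + \sigma_k^2\sum_{l\ge 1}\wt f_{l,0}^{(n)}(\nu)\wt f_{l,0}^{(n)}(\mu)\sigma_{k-l}^2$. Summing over $k$ and subtracting $n'\beta_n(\nu,\mu)$ from \eqref{BetaNuMu}, the deviation is controlled by two effects: the convergence of the Cesàro averages $n'^{-1}\sum_k E(\epsilon_k^r)$ for $r=2,4$, and the replacement of the inner finite sum by its $l=\infty$ limit, whose tail is governed by the coefficient decay implied by Assumption (A), cf. \eqref{SuffCond}. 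Both contributions are $O((n')^{1-\theta})$, yielding \eqref{ConvCovUncond}; dividing by $n'$ and bounding $\beta_n$ uniformly in $n$ and $\nu$ via the uniform $\ell_1$-bound on the weighting vectors gives the boundedness \eqref{CovUncondBounded}.

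For the conditional version I would split $A_k^{(n)}(\nu)=A_k^{>}(\nu)+A_k^{\le}(\nu)$ into its part independent of $\calF_{m'}$ (the errors $\epsilon_{m'+1},\dots,\epsilon_{k-1}$, i.e. lags $l<k-m'$) and its $\calF_{m'}$-measurable part (lags $l\ge k-m'$). Taking $E[\cdot\mid\calF_{m'}]$ and subtracting the unconditional value, everything cancels except boundary contributions of the form $E(\epsilon_k^3)\,\wt f_{0,0}^{(n)}(\nu)A_k^{\le}(\mu)$ together with its $(\nu,\mu)$-symmetric counterpart, and $\sigma_k^2\bigl(A_k^{\le}(\nu)A_k^{\le}(\mu)-E[A_k^{\le}(\nu)A_k^{\le}(\mu)]\bigr)$, each assembled only from the tail coefficients $\wt f_{l,0}^{(n)}$ with $l\ge k-m'$. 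The target $\bigl\|\sup_{\nu,\mu}|\,\cdot\,|\bigr\|_{L_1}$ is then estimated by summing $E[\sup_{\nu,\mu}|\text{boundary term}_k|]$ over $k=m'+1,\dots,m'+n'$.

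The step I expect to be the main obstacle is exactly this last bound, because the supremum over the growing index set $1\le\nu,\mu$ sits \emph{inside} the $L_1$-norm rather than outside it. The plan is to dominate the martingale coefficients $\wt f_{l,0}^{(n)}(\nu)$ uniformly in $\nu$ and $n$ using the uniform $\ell_1$-bound on the weighting vectors together with \eqref{SuffCond}, which transfers the decay to a uniform tail estimate $\sum_{l\ge j}\sup_{\nu}\wt f_{l,0}^{(n)}(\nu)^2\ll j^{-1/2-\theta}$. Bounding $E[\sup_{\nu}|A_k^{\le}(\nu)|]\le\bigl(\sum_{l\ge k-m'}\sup_{\nu}\wt f_{l,0}^{(n)}(\nu)^2\sigma_{k-l}^2\bigr)^{1/2}\ll (k-m')^{-1/4-\theta/2}$ by Cauchy--Schwarz and summing, $\sum_{k=m'+1}^{m'+n'}(k-m')^{-1/4-\theta/2}\asymp(n')^{3/4-\theta/2}\le(n')^{1-\theta}$ (using $\theta<1/2$), delivers the claimed rate, while the product term $A_k^{\le}(\nu)A_k^{\le}(\mu)$ contributes the strictly smaller $(n')^{1/2-\theta}$; this proves \eqref{ConvCovCondF_m}. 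Finally \eqref{CovCondBounded} follows from \eqref{ConvCovCondF_m} by the triangle inequality, since $n'^{-1}|n'\beta_n(\nu,\nu)|$ is uniformly bounded and the remainder is $o(1)$.
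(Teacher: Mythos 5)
Your overall architecture is essentially the paper's own, reorganized: the paper expands the conditional covariance directly into four terms $C_{Mn}^{(0)},\dots,C_{Mn}^{(3)}$, and these are exactly your objects --- your diagonal main term compared with $n'\beta_n$ is the paper's $C_{Mn}^{(1)}+C_{Mn}^{(2)}$ handled via (\ref{Ass5}), your third-moment boundary term $E(\epsilon_k^3)\,\wt f_{0,0}^{(n)}(\nu)A_k^{\le}(\mu)+\text{(sym.)}$ is the paper's $C_{Mn}^{(0)}$, and your centered quadratic term in the old innovations is the paper's $C_{Mn}^{(3)}$. Your martingale-difference argument for killing the $k\neq k'$ cross terms is a cleaner derivation of what the paper obtains by direct computation, and everything up to and including the identification of the boundary terms is correct.

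The gap sits precisely at the step you yourself flagged as the main obstacle, and your proposed resolution of it is not valid. The inequality
\[
E\Bigl[\sup_{\nu}|A_k^{\le}(\nu)|\Bigr]\;\le\;\Bigl(\sum_{l\ge k-m'}\sup_{\nu}\,\wt f_{l,0}^{(n)}(\nu)^2\,\sigma_{k-l}^2\Bigr)^{1/2}
\]
is not an instance of Cauchy--Schwarz. Cauchy--Schwarz gives $E|A_k^{\le}(\nu)|\le(\sum_{l}\wt f_{l,0}^{(n)}(\nu)^2\sigma_{k-l}^2)^{1/2}$ for each \emph{fixed} $\nu$, i.e.\ it controls $\sup_\nu E|A_k^{\le}(\nu)|$, not $E\sup_\nu|A_k^{\le}(\nu)|$, and these can differ by factors growing with the index set. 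In fact the displayed inequality is false in general: with i.i.d.\ standard normal $\epsilon_i$, tail index set $\{1,\dots,L\}$, and $\nu$ ranging over all sign patterns $s_\nu\in\{\pm1\}^L$ with $\wt f_{l,0}(\nu)=s_\nu(l)L^{-1/2}$, the right-hand side equals $1$ while $E\sup_\nu|A_k^{\le}(\nu)|=L^{-1/2}\sum_{l\le L}E|\epsilon_{k-l}|\asymp\sqrt{L}$. Nor can you retreat to the crude pointwise bound $\sup_\nu|A_k^{\le}(\nu)|\le\sum_{l\ge k-m'}\sup_\nu|\wt f_{l,0}^{(n)}(\nu)|\,|\epsilon_{k-l}|$: the envelope $\sup_\nu|\wt f_{l,0}^{(n)}(\nu)|$ decays like a power of $l$ with exponent strictly less than $1$ (this is true both for the rate $l^{-3/4-\theta/2}$ you quote and for the rate $l^{-1/2-\theta}$ that Assumption (A) actually yields for the convolved coefficients $\wt f_{l,0}$), so its tail $\ell_1$-sum diverges and that bound is worthless. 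The same interchange problem recurs, squared, in your product term $A_k^{\le}(\nu)A_k^{\le}(\mu)$.

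For comparison, the paper never works with signed linear combinations under the supremum: in its estimates of $C_{Mn}^{(0)}$ and $C_{Mn}^{(3)}$ it passes (via its ``$\|\cdot\|_{\ell_1}\le\|\cdot\|_{\ell_4}$ plus Jensen'' manipulations) to \emph{non-negative} expressions of the form $\sum_{l}\sup_\nu[\wt f_{l,0}^{(n)}(\nu)]^4\,\epsilon_{k-l}^4$, whose coefficient envelope $\sup_\nu[\wt f_{l,0}^{(n)}(\nu)]^4$ \emph{is} tail-$\ell_1$-summable; there the supremum over $\nu$ only ever acts on deterministic coefficients, so it costs nothing and no maximal inequality is needed. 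Your proposal contains no substitute for this device: to close the argument you must either reduce to non-negative envelope sums with square- or fourth-power coefficients, as the paper does, or prove a genuine maximal inequality over $\nu$ --- and the latter generically introduces factors depending on $L_n$ or $d_n$, which would destroy the uniformity in $n$ that the lemma requires.
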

\begin{proof}
	A direct calculation leads to
	\begin{align*}
	C_{Mn}( \nu, \mu ) 
	& = E[ (M_{m'+n'}^{(n)}(\nu) - M_{m'}^{(n)}(\nu) )( M_{m'+n'}^{(n)}( \mu ) - M_{m'}^{(n)}(\mu) ) \, | \, \calF_{m'} ] \\
	& = C_{Mn}^{(0)}( \nu, \mu ) +  \cdots +  C_{Mn}^{(3)}( \nu, \mu ),
	\end{align*}
	where
	\begin{align*}
	C_{Mn}^{(0)}( \nu, \mu ) & = \sum_{k,k'=m'+1}^{m'+n'} \sum_{l=1}^\infty \wt{f}^{(n)}_{l,0}( \nu ) \wt{f}_{\ell,0}^{(n)}(\mu) E[ \epsilon_k \epsilon_{k-l}( \epsilon_{k'}^2 - \sigma_{k}^2 ) | \calF_{m'} ], \\ 
	C_{Mn}^{(1)}( \nu, \mu ) & = \sum_{k=m'+1}^{m'+n'} \wt{f}_{0,0}^{(n)}( \nu ) \wt{f}_{0,0}^{(n)}( \mu ) ( \gamma_k - \sigma_k^4 ), \\ 
	C_{Mn}^{(2)}( \nu, \mu ) & = \sum_{k=m'+1}^{m'+n'} \sum_{l=0}^{k-m'+1} \wt{f}_{l,0}^{(n)}( \nu ) \wt{f}_{l,0}^{(n)}( \mu ) \sigma_k^2 \sigma_{k-l}^2, \\
	C_{Mn}^{(3)}( \nu, \mu ) &= \sum_{k=m'+1}^{m'+n'} \sum_{l,l'=k-m'}^\infty \wt{f}_{l,0}^{(n)}( \nu ) \wt{f}_{l',0}^{(n)}( \mu ) \sigma_k^2 \epsilon_{k-l} \epsilon_{k-l'},
	\end{align*}
	for $ m', n' \ge 0 $. Let us first estimate $ C_{Mn}^{(1)}( \nu, \mu ) + C_{Mn}^{(2)}( \nu, \mu )  $. We have
	\[
	\sup_{n \ge 1}   \sup_{1 \le \nu, \mu} | C_{Mn}^{(1)}( \nu, \mu ) + C_{Mn}^{(2)}( \nu, \mu ) - n'  \beta_n(\nu, \mu) | \stackrel{n',m'}{\ll} (n')^{1-\theta},
	\]
	see (\ref{Ass5}). Next, we show that
	\begin{equation}
	\label{CMn3_toshow}
	\sup_{n \ge 1}  E \left[ \sup_{1 \le \nu, \mu} | C_{Mn}^{(3)}( \nu, \mu ) |  \right] \stackrel{n',m'}{\ll} (n')^{1-\theta}.
	\end{equation}
	Recall that $ c_2 = \sup_{k \ge 1} E( \epsilon_k^2) < \infty $ and assume w.l.o.g. $c_2 = 1 $ in what follows.
	The Cauchy--Schwarz inequality yields
	\begin{align*}
	&  | (n')^{-1} C_{Mn}^{(3)}( \nu, \mu ) | \\
	& \quad \le
	\frac{1}{n'} \sum_{k=m'+1}^{m'+n'} \left| \sum_{l=k-m'}^\infty \wt{f}_{l,0}^{(n)}( \nu )  \epsilon_{k-l} \right|
	\left| \sum_{l'=k-m'}^\infty \wt{f}_{l',0}^{(n)}( \mu )  \sigma_k^2  \epsilon_{k-l'} \right| \\
	& \quad \le 
	\sqrt{
		\frac{1}{n'} \sum_{k=m'+1}^{m'+n'} \left( \sum_{l=k-m'}^\infty | \wt{f}_{l,0}^{(n)}( \nu ) | \epsilon_{k-l} \right)^2 
	} 
	\sqrt{
		\frac{1}{n'} \sum_{k=m'+1}^{m'+n'} \left( \sum_{l=k-m'}^\infty | \wt{f}_{l,0}^{(n)}( \mu ) | \epsilon_{k-l'} \right)^2 
	}. 
	\end{align*}
	Using $ \| \cdot \|_{\ell_1} \le \| \cdot \|_{\ell_4} $ and Jensen's inequality, we obtain
	\begin{align*}
	\sqrt{
		\frac{1}{n'} \sum_{k=m'+1}^{m'+n'} \left( \sum_{l=k-m'}^\infty | \wt{f}_{l,0}^{(n)}( \nu ) | \epsilon_{k-l} \right)^2 
	} 
	&  \le  
	\sqrt{
		\frac{1}{n'} \sum_{k=m'+1}^{m'+n'} \left( \sum_{l=k-m'}^\infty [ \wt{f}_{l,0}^{(n)}( \nu ) ]^4 \epsilon_{k-l}^4 \right)^{1/2} 
	}  \\
	& \le 
	\sqrt{
		\frac{1}{n'} \sum_{k=m'+1}^{m'+n'} \left( \sum_{l=k-m'}^\infty \sup_{1 \le \nu} [ \wt{f}_{l,0}^{(n)}( \nu ) ]^4 \epsilon_{k-l}^4 \right)^{1/2} 
	} \\
	& \le 
	\left(
	\frac{1}{n'} \sum_{k=m'+1}^{m'+n'}  \sum_{l=k-m'}^\infty \sup_{1 \le \nu} [ \wt{f}_{l,0}^{(n)}( \nu ) ]^4 \epsilon_{k-l}^4 
	\right)^{1/4},
	\end{align*}
	where the upper bound does not depend on $ \nu $. Hence,
	\begin{align*}
	E  \sup_{1 \le \nu, \mu} | (n')^{-1} C_{Mn}^{(3)}( \nu, \mu ) |
	&  \le E   \sqrt{ 
		\frac{1}{n'} \sum_{k=m'+1}^{m'+n'}  \sum_{l=k-m'}^\infty \sup_{1 \le \nu} [ \wt{f}_{l,0}^{(n)}( \nu ) ]^4 \epsilon_{k-l}^4 
	} \\
	& \le \sqrt{ 
		\frac{1}{n'} \sum_{k=m'+1}^{m'+n'} \sum_{l=k-m'}^\infty \sup_{1 \le \nu} [ \wt{f}_{l,0}^{(n)}( \nu ) ]^4 \gamma_{k-l} 
	}
	\end{align*} 
	Using $ | \wt{f}_{l,0}^{(n)}( \nu ) | = O( \| \vecv_n^{(\nu)} \|_{\ell_1} \| \vecv_n^{(\mu)} \|_{\ell_1} l^{-3/4-\theta/2} ) $, uniformly in $n$, $ \sup_{k \ge 1} \gamma_k < \infty $ and the elementary fact that $ \sum_{k=m'}^{m'+n'} \sum_{l=k-m'+1}^\infty l^{-3-2\theta} = O( (n')^{1-2\theta} ) $\chg{, (\ref{ConvCovCondF_m}) follows.}{,  (\ref{CMn3_toshow}) follows.} 
	\chg{}{Lastly, consider $ C_{Mn}^{(0)}( \nu, \mu ) $. Since the indices satisfy $ k-\ell \le m' $, $ \epsilon_{k-\ell} $ is $ \calF_{m'} $-measurable, whereas $ \epsilon_k, \epsilon_{k'}$ are independent from $ \calF_{m'}$. Hence
	\[
	  C_{Mn}^{(0)}( \nu, \mu ) = \sum_{k,k'=m'+1}^{m'+n'} \sum_{l=1}^\infty \wt{f}_{l,0}^{(n)}( \nu ) \wt{f}_{l,0}^{(n)}( \mu ) 
	  \epsilon_{k-\ell} E( \epsilon_k ( \epsilon_{k'}^2 - \sigma_{k'}^2 ) ).
	\]
	Clearly, for $ k \not= k' $ the summands vanish, such that 
	\[
	  C_{Mn}^{(0)}( \nu, \mu ) = \sum_{k=m'+1}^{m'+n'} \sum_{l=1}^\infty \wt{f}_{l,0}^{(n)}( \nu ) \wt{f}_{l,0}^{(n)}( \mu ) 
	    \epsilon_{k-l} E( \epsilon_k^3 ).
	\]
	If $ E( \epsilon_k^3 ) = 0 $ for all $k$, $ C_{Mn}^{(0)}( \nu, \mu ) = 0 $. Otherwise, put $ c_3 = \sup_{k \ge 1 } E | \epsilon_k |^3 $. We have the estimate
	\begin{align*}
	    (n')^{-1} E | C_{Mn}^{(0)}( \nu, \mu ) |
	      & \le c_3 \frac{1}{n'} \sum_{k=m'}^{m'+n'} \sum_{l=1}^{\infty} |  \wt{f}_{l,0}^{(n)}( \nu )  \wt{f}_{l,0}^{(n)}( \mu ) | E |\epsilon_{k-l} |    \\
	      & \le c_3 \frac{1}{n'} \sum_{k=m'}^{m'+n'}  \sqrt{ \sum_{l=1}^\infty [\wt{f}_{l,0}^{(n)}( \nu )]^2 [\wt{f}_{l,0}^{(n)}( \mu )]^2  } \\
	      & =  O\left( \frac{1}{n'} \sum_{k=m'}^{m'+n'} \sqrt{ \sum_{l=1}^{\infty} l^{-3-2\theta} } \right) 
	      \stackrel{n',m'}{\ll} (n')^{-\theta}.
	\end{align*}
	Hence (\ref{ConvCovCondF_m}) follows,
	} 
	The above arguments also imply that 
	\[ \sup_{n \ge 1} \sup_{1 \le \nu, \mu} E (n')^{-1}  | C_{Mn}( \nu, \mu ) | \stackrel{n',m'}{\ll} 1, \] 
	\chg{}{since 
	\[
	    |(n')^{-1} C_{Mn}(\nu, \mu)| \le (n')^{-1}  |C_{Mn}(\nu, \mu) - n' \beta_n(\nu,\mu) | + (n')^{-1} | \beta_n(\nu,\mu) |, 
	\]
	where the first term is a.s. finite, since its $L_1$-norm is $ \ll (n')^{-\theta} $, and the second one is $ \ll 1 $,
	uniformly in $ 1 \le \nu ,\mu $, such that
	\[
	  \sup_{n \ge 1} \sup_{1 \le \nu, \mu} E |(n')^{-1} C_{Mn}(\nu, \mu)| \le (n')^{-1}  |C_{Mn}(\nu, \mu) - n' \beta_n(\nu,\mu) | + (n')^{-1} | \beta_n(\nu,\mu) | \ll 1.
	\]
}
	which in turn implies (\ref{CovCondBounded}). To verify (\ref{ConvCovUncond}) one first conditions on $ \calF_{m'} $ and then argues similarly in order to estimate $ E C_{Mn}^{(3)}( \nu, \mu ) $. Observe that with 
	$ c_{24} = \max \{ \sup_{k \ge 1} (\sigma_k^2)^3, \sup_{k \ge 1} \gamma_k^4  \sup_{k \ge 1} \sigma_k^2\} $
	\begin{align*}
	E (n')^{-1} C_{Mn}^{(3)}(\nu, \mu) 
	& \le c_{24} \frac{1}{n'} \sum_{k=m'+1}^{m'+n'} \sum_{l=0}^\infty \left( \sup_{1 \le \nu} | f_{l,0}^{(n)}( \nu ) | \right)^2 \\
	& \le c_{24} \sqrt{ \frac{1}{n'} \sum_{k=m'+1}^{m'+n'} \sum_{l=0}^\infty \left( \sup_{1 \le \nu} | f_{l,0}^{(n)}( \nu ) | \right)^4  } \\
	& \stackrel{n',m'}{\ll} (n')^{-\theta},
	\end{align*}
	using $ \| \cdot \|_{\ell_1} \le \| \cdot \|_{\ell_4} $ and Jensen's inequality,
	which verifies (\ref{ConvCovUncond}) and in turn (\ref{CovUncondBounded}).
\end{proof}

Introduce for $ m', n' \ge 1 $ and each coordinate $ 1 \le \nu \le L_n $ the partial sums 
$ D_{n',m'}^{(n)}( \vecv_n^{(\nu)}, \vecw_n^{(\nu)} ) = \sum_{i=m'+1}^{m'+n'} \xi_i^{(n)}( \vecv_n^{(\nu)}, \vecw_n^{(\nu)} ) $ and denote the appropriately scaled versions by
\begin{equation}
\label{DefDScaled}
T_{n',m'}^{(n)}(\nu) = \frac{ D_{n',m'}^{(n)}( \vecv_n^{(\nu)}, \vecw_n^{(\nu)} ) }{ \sqrt{L_n n'} }, \qquad m', n' \ge 1,
\end{equation}
for $ 1 \le \nu \le L_n $. The corresponding martingale approximations are given by
\begin{equation}
\label{DefMartingalesScaled}
M_{n',m'}^{(n)}( \nu ) = \frac{ M_{n'+m'}^{(n)}( \vecv_n^{(\nu)}, \vecw_n^{(\nu)} ) 
	-  M_{m'}^{(n)}( \vecv_n^{(\nu)}, \vecw_n^{(\nu)} ) }{ \sqrt{L_n n'} },
\qquad m', n' \ge 1.
\end{equation}
We need to study the approximation error,
\[
R_{n',m'}^{(n)}( \nu ) = T_{n',m'}^{(n)}(\nu) - M_{n',m'}^{(n)}( \nu ), \qquad  n', m' \ge 1.
\]
The next result improves upon [Lemma~2]\cite{Kouritzin1995} by showing that, firstly, the error is of order $ (n')^{-\theta} $ in terms of the $ L_1 $--norm when conditioning on the past, and, secondly, that the result is uniform over $ \ell_1 $--bounded weighting vectors.

\begin{lemma} 
	\label{Lemma_Martingale_Approx}
	We have
	\[
	\left\| \sup_{1 \le \nu} E\left( ( R_{n',m'}^{(n)}( \nu ) )^2 \, | \, \calF_{m'} \right) \right\|_{1}
	\stackrel{n',m'}{\ll} L_n^{-1} (n')^{-\theta}. 
	\]
\end{lemma}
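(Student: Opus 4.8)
The plan is to control the martingale approximation error $R_{n',m'}^{(n)}(\nu) = T_{n',m'}^{(n)}(\nu) - M_{n',m'}^{(n)}(\nu)$ by reducing the claimed uniform bound to the single-form estimate established in the cited [Lemma~2]\cite{Kouritzin1995}, and then tracking the dependence on the weighting vectors through the constants. Writing out the scaling, we have
\[
R_{n',m'}^{(n)}(\nu) = \frac{1}{\sqrt{L_n n'}}\,\Bigl( D_{n',m'}^{(n)}(\vecv_n^{(\nu)}, \vecw_n^{(\nu)}) - \bigl[ M_{n'+m'}^{(n)}(\vecv_n^{(\nu)}, \vecw_n^{(\nu)}) - M_{m'}^{(n)}(\vecv_n^{(\nu)}, \vecw_n^{(\nu)}) \bigr] \Bigr),
\]
so that squaring pulls out a factor $L_n^{-1}(n')^{-1}$ and leaves the \emph{unscaled} single-form approximation error $\widetilde{R}_{n',m'}^{(n)}(\nu) = D_{n',m'}^{(n)}(\vecv_n^{(\nu)}, \vecw_n^{(\nu)}) - [M_{n'+m'}^{(n)}(\nu) - M_{m'}^{(n)}(\nu)]$. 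The goal thus becomes
\[
\left\| \sup_{1 \le \nu} E\bigl( (\widetilde{R}_{n',m'}^{(n)}(\nu))^2 \mid \calF_{m'} \bigr) \right\|_1 \stackrel{n',m'}{\ll} (n')^{1-\theta},
\]
after which multiplying by $L_n^{-1}(n')^{-1}$ yields exactly $L_n^{-1}(n')^{-\theta}$.

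\textbf{Key steps.} First I would invoke the martingale decomposition from \cite{StelandvonSachs2016}/\cite{Kouritzin1995}: the linear process $Y_{ni}(\vecv_n)$, $Y_{ni}(\vecw_n)$ have projected coefficients $c_{nj}^{(v)}, c_{nj}^{(w)}$ as in (\ref{DefCNWeights}), and the centered product $\xi_i^{(n)}(\vecv_n,\vecw_n)$ admits an orthogonal martingale-difference approximation whose remainder is built from the tails $\sum_{l \ge k} \wt{f}_{l,0}^{(n)}(\nu)\epsilon_{\cdot-l}$-type terms, entirely analogous to $C_{Mn}^{(3)}$ in Lemma~\ref{HLemmaCov}. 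The second step is to express the conditional second moment $E((\widetilde{R}_{n',m'}^{(n)}(\nu))^2 \mid \calF_{m'})$ as a sum over lag pairs, apply Cauchy--Schwarz exactly as in the proof of Lemma~\ref{HLemmaCov}, and then use the key estimate $|\wt{f}_{l,0}^{(n)}(\nu)| = O(\|\vecv_n^{(\nu)}\|_{\ell_1}\|\vecw_n^{(\nu)}\|_{\ell_1}\, l^{-3/4-\theta/2})$ together with the uniform $\ell_1$-bound $\max_{1\le\nu}\max\{\|\vecv_n^{(\nu)}\|_{\ell_1},\|\vecw_n^{(\nu)}\|_{\ell_1}\} \le C$. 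The third step is the summability bound $\sum_{k=m'}^{m'+n'}\sum_{l=k-m'+1}^\infty l^{-3-2\theta} = O((n')^{1-2\theta})$ already used above, which (after taking square roots where needed) delivers the $(n')^{1-\theta}$ rate uniformly in $\nu$ because the bounding constant no longer depends on $\nu$.

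\textbf{Main obstacle.} The delicate point is moving the supremum over $\nu$ \emph{inside} the expectation and the conditional expectation while preserving the rate: one cannot simply bound each coordinate and then take a crude maximum, because $L_n$ may grow without control. The trick, mirrored on the $C_{Mn}^{(3)}$ computation, is to replace $\sup_{\nu}[\wt{f}_{l,0}^{(n)}(\nu)]^2$ by a single deterministic envelope $\sup_{1\le\nu}[\wt{f}_{l,0}^{(n)}(\nu)]^4$ \emph{before} integrating, using $\|\cdot\|_{\ell_1}\le\|\cdot\|_{\ell_4}$ and Jensen's inequality to linearize the square root, so that the resulting envelope is $\nu$-free and the $\ell_1$-uniformity of the weighting vectors makes the constant independent of $n$ as well. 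I expect the bookkeeping of cross terms in the remainder decomposition — separating the genuinely $\calF_{m'}$-measurable past-$\epsilon$ contributions from the future innovations, as was done when splitting off $C_{Mn}^{(0)}$ — to be the most error-prone part, but it is routine once the envelope argument is in place.
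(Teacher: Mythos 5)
Your overall route is the same as the paper's --- rescale by $(L_n n')^{-1/2}$ to reduce the claim to an $(n')^{1-\theta}$ bound for the unscaled error, then control the remainder of Kouritzin's martingale approximation uniformly in $\nu$ via deterministic coefficient bounds that are uniform thanks to the $\ell_1$ condition --- but the estimates you propose to use are not the ones that make the argument work, and the step you dismiss as routine bookkeeping is the actual core of the proof.

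Concretely, the remainder is not built only from lag-zero tail terms ``analogous to $C_{Mn}^{(3)}$''. Following \cite{Kouritzin1995}, the paper splits $R_{n',m'}^{(n)}(\nu) = Q_{n',m'}^{(n)}(\nu) + P_{n',m'}^{(n)}(\nu) + O_{n',m'}^{(n)}(\nu)$, where $P$ is the projection onto the span of $\{\epsilon_r\epsilon_s - \sigma_r^2\eins_{\{r=s\}} : r,s \le m'\}$ (hence $\calF_{m'}$-measurable, so its conditional second moment is just its square), $Q$ is the projection onto products of innovations with indices in $(m', m'+n']$ (hence independent of $\calF_{m'}$, so the conditioning can simply be dropped), and $O$ is the mixed term, handled by Fatou plus a Cauchy--Schwarz/$\ell_2$ estimate. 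These structural facts are precisely what make $E(\,\cdot\,|\,\calF_{m'})$ computable; your proposal defers them to ``error-prone but routine'' bookkeeping. More importantly, $P$ involves the differences $\wt{f}_{l,i+1}^{(n)}(\nu) - \wt{f}_{l,i+n'+1}^{(n)}(\nu)$ summed over \emph{all} truncation indices $i$, and its bound requires the estimate (\ref{Ass1}), namely $\sup_n \sum_{i}\sum_{l}(\wt{f}_{l,i}^{(n)} - \wt{f}_{l,i+n'}^{(n)})^2 \ll (n')^{1-\theta}$, while $Q$ and $O$ require (\ref{Ass3}), $\sum_{k=1}^{n'}\sum_{l}(\wt{f}_{l,k}^{(n)})^2 \ll (n')^{1-\theta}$. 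Neither follows from the two facts you cite --- the pointwise decay $|\wt{f}_{l,0}^{(n)}(\nu)| \ll l^{-3/4-\theta/2}$ and the summability $\sum_k\sum_l l^{-3-2\theta} = O((n')^{1-2\theta})$ --- because those concern only the index $i=0$; the $(n')^{1-\theta}$ rate for $P$ comes from cancellation in the differences of tails $\wt{f}_{l,i}^{(n)} - \wt{f}_{l,i+n'}^{(n)} = \sum_{j=i}^{i+n'-1} f_{l,j}^{(n)}$, which is exactly what the appendix Lemma and Definition supplies. Finally, the $\|\cdot\|_{\ell_1}\le\|\cdot\|_{\ell_4}$/Jensen envelope device you make central is the tool of Lemma~\ref{HLemmaCov} (where random fourth powers of innovations sit inside the supremum); in Lemma~\ref{Lemma_Martingale_Approx} the uniformity in $\nu$ is obtained more simply, by taking $\sup_\nu$ of the purely deterministic sums in (\ref{Ass1}) and (\ref{Ass3}), which are uniformly bounded because of the uniform $\ell_1$ bound on the weighting vectors.
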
 

\begin{proof}
	Consider, as in \cite{Kouritzin1995}, the decomposition
	\[
	R_{n',m'}^{(n)}( \nu ) = Q_{n',m'}^{(n)}( \nu ) + P_{n',m'}^{(n)}( \nu )  + O_{n',m'}^{(n)}( \nu ),
	\]
	where
	\begin{align}
	Q_{n',m'}^{(n)}( \nu ) & = \frac{1}{\sqrt{L_n n'}} \sum_{i=0}^{n'-1} \sum_{l=0}^{n'-i-1} 
	\wt{f}_{l,i+1}^{(n)}( \sigma_{m'-n'-i}^2 \eins_{\{l=0\}} - \epsilon_{m'+n'-i} \epsilon_{m'+n'-l} ), \\ 
	P_{n',m'}^{(n)}( \nu )  &= \frac{1}{\sqrt{L_n n'}}  \sum_{i=0}^\infty \sum_{l=0}^\infty 
	( \wt{f}_{l,i+1}^{(n)}( \nu ) - \wt{f}_{l,i+n'+1}^{(n)}(\nu) ) ( \epsilon_{m'-i} \epsilon_{m'-i-l} - \sigma_{m'-l}^2 \eins_{\{l=0\}} ),  \\
	O_{n',m'}^{(n)}( \nu )  &= - \frac{1}{\sqrt{L_n n'}} \sum_{i=0}^{n'-1} \sum_{k=n'}^\infty 
	\wt{f}_{k-i,i+1}^{(n)}( \nu )\epsilon_{m'+n'-k} \epsilon_{m'+n'-i}.
	\end{align}
	$ P_{n',m'}^{(n)}( \nu ) $ is the projection of $ R_{n',m'}^{(n)}( \nu ) $ onto the subspace spanned by 
	$ \{ \epsilon_r \epsilon_s - \sigma_r^2 \eins_{\{r=s\}} : - \infty < r, s \le m' \} $ and therefore $ \calF_{m'} $--measurable. Hence
	with $ e_{m',i,l} = \epsilon_{m'-i} \epsilon_{m'-i-l} - \sigma_{m'-l}^2 \eins_{\{l=0\}} $
	\begin{align*}
	L_n n' E\left( ( P_{n',m'}^{(n)}( \nu ) )^2 \, | \, \calF_{m'} \right) 
	& = \left( 
	\sum_{i=0}^\infty \sum_{l=0}^\infty 
	( \wt{f}_{l,i+1}^{(n)}( \nu ) - \wt{f}_{l,i+n'+1}^{(n)}(\nu) ) e_{m',i,l} 
	\right)^2 \\
	& \le 
	\left(
	\sum_{i=1}^\infty \sum_{l=0}^\infty | \wt{f}_{l,i+1}^{(n)}( \nu ) - \wt{f}_{l,i+n'+1}^{(n)}(\nu) | 
	| e_{m',i,l} | 
	\right)^2 \\
	& \le 
	\sum_{i=1}^\infty \sum_{l=0}^\infty ( \wt{f}_{l,i+1}^{(n)}( \nu ) - \wt{f}_{l,i+n'+1}^{(n)}(\nu) )^2 e_{m',i,l}^2 \\
	& \le 
	\sup_{k \ge 1} \gamma_k^2 \sup_{1 \le \nu} \sum_{i=0}^\infty \sum_{l=0}^\infty 
	( \wt{f}_{l,i+1}^{(n)}( \nu ) - \wt{f}_{l,i+n'+1}^{(n)}(\nu) )^2,
	\end{align*}
	a.s., such that due to (\ref{Ass1})
	\[
	E \sup_{1 \le \nu} E\left(  ( P_{n',m'}^{(n)}( \nu ) )^2 \, | \, \calF_{m'}  \right) 
	\stackrel{m',n'}{<<} L_n^{-1} (n')^{-\theta}, a.s.
	\]
	$ Q_{n',m'}^{(n)}( \nu ) $ is the projection of $ R_{n',m'}^{(n)}( \nu ) $ onto the subspace spanned by 
	$ \{ \epsilon_r \epsilon_s - \sigma_r^2 \eins_{\{r=s\}} : m' < r, s \le m' + n' \} $ and thus independent from $ \calF_{m'} $, such that $ E\left( ( Q_{n',m'}^{(n)}( \nu ) )^2 \, | \, \calF_{m'} \right) = E ( Q_{n',m'}^{(n)}( \nu ) )^2 
	\stackrel{m',n'}{<<} \sup_{k \ge 1} \gamma_k (L_n n')^{-1} \sum_{i=1}^{n'} \sum_{l=0}^{n'-i} ( \wt{f}_{l,i}^{(n)}( \nu) )^2 \stackrel{m',n'}{<<} (n')^{-\theta} $,  uniformly in $ 1 \le \nu $. Last, by Fatou
	\begin{align*}
	& (L_n n') E \left( ( O_{n',m'}^{(n)}( \nu ) )^2 \, | \, \calF_{m'} \right) \\
	& \qquad \le \lim_{N \to \infty} \sum_{i,i'=0}^{n'-1} \sum_{k,k'=n'}^N \wt{f}_{k-i,i+1}^{(n)}( \nu )
	\wt{f}_{k-i',i'+1}^{(n)}( \nu )
	\epsilon_{m'+n'-k} \epsilon_{m'+n'-k'} E( \epsilon_{m'+n'-i} \epsilon_{m'+n'-i'} ) \\
	& \qquad \le \sup_{k \ge 1} \sigma_k^2 \lim_{N \to \infty}
	\sum_{i=0}^{n'-1} \sum_{k, k'=n'}^N | \wt{f}_{k-i,i+1}^{(n)}( \nu ) \wt{f}_{k'-i,i+1}^{(n)}( \nu ) \epsilon_{m'+n'-k} \epsilon_{m'+n'-k'} | \\
	& \qquad \le \sup_{k \ge 1} \sigma_k^2 \lim_{N \to \infty}
	\sum_{i=0}^{n'-1} \sqrt{ 
		\sum_{k, k'=n'}^N 
		[ \wt{f}_{k-i,i+1}^{(n)}( \nu ) ]^2 
		[ \wt{f}_{k'-i,i+1}^{(n)}( \nu ) ]^2
		\epsilon_{m'+n'-k}^2
		\epsilon_{m'+n'-k'}^2
	} \\
	& \qquad \le  \sup_{k \ge 1} \sigma_k^2
	\lim_{N \to \infty} \sum_{i=0}^{n'-1} \sum_{k=n'}^N [ \wt{f}_{k-i,i+1}^{(n)}( \nu ) ]^2 \epsilon_{m'+n'-k}^2
	\end{align*}  
	a.s., where we estimated the $ \ell_1 $--norm by the $ \ell_2 $--norm. Hence,
	\begin{align*}
	E \sup_{1 \le \nu} E \left( ( O_{n',m'}^{(n)}( \nu ) )^2  \, | \, \calF_{m'} \right) 
	& \le (L_n n')^{-1} \sup_{k \ge 1} ( \sigma_k^2 )^2 \sum_{i=0}^{n'-1} \sum_{l=1}^\infty \sup_{1 \le \nu} [ \wt{f}_{l,i}^{(n)}( \nu ) ]^2 \\
	& \stackrel{m',n'}{\ll}  L_n^{-1} (n')^{-\theta},
	\end{align*}
	by virtue of (\ref{Ass3}). This completes the proof. 
\end{proof}

\begin{proof}[Proof of Theorem~\ref{ThConvCovOp}]
	For a sequence of conditional covariance operators $ C_{n}( \cdot | \calA ) = E[ (\cdot, X_n) X_n \, | \, \calA ] $ with $ X_n = (X_{nj} )_j $, $ E(X_n) = 0$ , $n \ge 1 $, say, we have convergence in the operator semi-norm, defined as $ \| T \| = \sup_{f: \| f \| =1} | (f,Tf) | $ for an operator $T$ acting on $ \ell_2 $, to some unconditional covariance operator $ C(\cdot) = E[ (\cdot, Z) Z ] $, $ Z = ( Z_j )_j $, $ E(Z) = 0 $, in expectation, if 
	\[
	E \sup_{f \in \ell_2 : \| f \|=1} | (f, C_n(f|\calA) - C(f)) | = 
	E \sup_{f \in \ell_2 : \| f \|=1} \left| \sum_{j,k=1}^\infty  f_j f_k [ E(X_{nj} X_{nk}|\calA) - E(Z_jZ_k) ] \right| 
	\]
	converges to $0 $, as $ n \to \infty $. Define the $ \ell_2 $--valued random elements
	\[
	M_{n',m'}^{(n)} = \left( M_{n',m'}^{(n)}( \nu ) \right)_{\nu=1}^\infty, \qquad 
	T_{n',m'}^{(n)} = \left( T_{n',m'}^{(n)}( \nu ) \right)_{\nu=1}^\infty,
	\]
	where $ T_{n',m'}^{(n)}( \nu ) = 0 $ and $ M_{n',m'}^{(n)}( \nu ) = T_{n',m'}^{(n)}( \nu ) $, for $ \nu > L_n $. 
	Recall that
	\[
	\calC_{n',m'}^{(n)}(f | \calF_m ) = E[ (f, T_{n',m'}^{(n)} ) T_{n',m'}^{(n)} | \calF_{m'} ], 
	\qquad f \in \ell_2,
	\]
	and let
	\[
	\overline{\calC}_{n',m'}^{(n)}(f | \calF_m ) = E[ (f, M_{n',m'}^{(n)} ) M_{n',m'}^{(n)} | \calF_{m'} ], 
	\qquad f \in \ell_2,
	\]
	be the conditional covariance operator associated to the martingale approximations. Obviously,
	\[
	\sup_{n \ge 1}  E \| \calC_{n',m'}^{(n)}( \cdot | \calF_{m'} ) - \calC^{(n)}( \cdot ) \| \le
	\Xi_{n',m'} + \Psi_{n',m'}
	\]
	where
	\begin{align*}
	\Xi_{n',m'} & = \sup_{n \ge 1} E \| \calC_{n',m'}^{(n)}( \cdot | \calF_{m'} ) - \overline{\calC}^{(n)}_{n',m'}( \cdot | \calF_{m'} ) \|, \\
	\Psi_{n',m'} & = \sup_{n \ge 1} E \| \overline{\calC}^{(n)}_{n',m'}( \cdot | \calF_{m'} ) - \calC^{(n)}( \cdot ) \|.
	\end{align*}
	We shall estimate both terms separately. To simplify notation, let
	\begin{align*}
	C_{Tn}( \nu, \mu ) & = E\left( T_{n',m'}^{(n)}( \nu ) T_{n',m'}^{(n)}( \mu ) \, | \, \calF_{m'} \right), \\
	C_{Mn}( \nu, \mu ) & = E \left(  M_{n',m'}^{(n)}( \nu ) M_{n',m'}^{(n)}( \mu ) \, | \, \calF_{m'} \right), 
	\end{align*}
	for $ 1 \le \nu, \mu \le L_n $. To estimate $ \Xi_{n',m'} $, we shall show that $ | C_{Tn}( \nu, \mu ) - C_{Mn}( \nu, \mu ) | $ is $ O( L_n^{-1} (n')^{-\theta/2} ) $, uniformly in $ n ,\nu, \mu $. By an application of the Cauchy-Schwarz inequality, we have
	\begin{align*}
	& E \sup_{1 \le \nu, \mu} | C_{Tn}( \nu, \mu ) - C_{Mn}(\nu, \mu) | \\
	& = E \sup_{1 \le \nu, \mu}  \left| E \left(  T_{n',m'}^{(n)}( \nu ) T_{n',m'}^{(n)}( \mu ) \, | \, \calF_{m'} \right)
	-
	E\left(  M_{n',m'}^{(n)}( \nu ) M_{n',m'}^{(n)}( \mu ) \, | \, \calF_{m'}  \right)
	\right| \\
	& \le  E \sup_{1 \le \nu, \mu}  
	E\left(  | T_{n',m'}^{(n)}( \nu ) - M_{n',m'}^{(n)}(\nu) | | T_{n',m'}^{(n)}( \mu ) | \, \biggl| \, \calF_{m'} \right) \\
	& \quad 
	+ 
	E \sup_{1 \le \nu, \mu}  E\left( 
	| M_{n',m'}^{(n)}( \nu ) | |  T_{n',m'}^{(n)}( \mu )  - M_{n',m'}^{(n)}( \mu ) | \, \biggl| \, \calF_{m'} \right),
	\end{align*}
	where 
	\begin{align*}
	&  E \sup_{1 \le \nu, \mu}  E\left(  |  T_{n',m'}^{(n)}( \nu ) - M_{n',m'}^{(n)}(\nu) |  \bigl| T_{n',m'}^{(n)}( \mu ) |   \, | \, \calF_{m'} \right) \\
	& \qquad \le  E \sup_{1 \le \nu}  \sqrt{ E \left( ( T_{n',m'}^{(n)}( \nu ) - M_{n',m'}^{(n)}(\nu) )^2   \, | \, \calF_{m'} \right) }
	\sqrt{  \sup_{1 \le \nu} E | T_{n',m'}^{(n)}( \nu ) |^2  } \\
	& \qquad \stackrel{n',m'}{\ll} L_n^{-1} (n')^{- \theta/2},
	\end{align*}
	since $  T_{n',m'}^{(n)}( \mu ) $ is independent from $ \calF_{m'} $ and the decomposition
	$ T_{n',m'}^{(n)}( \mu ) = M_{n',m'}^{(n)}( \mu ) + ( T_{n',m'}^{(n)}( \mu ) - M_{n',m'}^{(n)}( \mu )) $ leads to
	$ \sup_{n \ge 1} \sup_{1 \le \nu} E | T_{n',m'}^{(n)}( \nu ) |^2 < \infty $, by virtue of (\ref{CovUncondBounded}) and
	Lemma~\ref{Lemma_Martingale_Approx}. Further,
	\begin{align*}
	& E\left(  | M_{n',m'}^{(n)}( \nu ) | |  T_{n',m'}^{(n)}( \mu )  - M_{n',m'}^{(n)}( \mu ) |    \, \bigl | \, \calF_{m'} \right) \\
	& \qquad \le \sqrt{ E \left( ( T_{n',m'}^{(n)}( \mu ) - M_{n',m'}^{(n)}(\mu)  )^2   \, | \, \calF_{m'} \right) }
	\sqrt{ E \left( | M_{n',m'}^{(n)}( \nu ) |^2 \, | \, \calF_{m'} \right) } \\
	& \qquad \stackrel{n',m'}{\ll} L_n^{-1} (n')^{- \theta/2},
	\end{align*}
	by (\ref{CovCondBounded}) and Lemma~\ref{Lemma_Martingale_Approx}.
	uniformly in $ \nu, \mu = 1, \dots, L_n $. Consequently,
	\[
	\sup_{n \ge 1} L_n  E \sup_{1 \le \nu, \mu} | C_{Tn}( \nu, \mu ) - C_{Mn}(\nu, \mu) | \stackrel{n',m'}{\ll}  (n')^{- \theta/2}
	\]
	Hence, using the inequality $ \sum_{\nu=1}^{L_n} | f_\nu | \le L_n^{1/2} \left( \sum_{\nu=1}^{L_n} f_\nu^2 \right)^{1/2} $, we obtain
	\begin{align*}
	\Xi_{n',m'} & \le 
	\sup_{n \ge 1}  E \| \calC_{n',m'}^{(n)}( \cdot | \calF_{m'} )
	- \overline{\calC}_{n',m'}^{(n)}( \cdot | \calF_{m'} ) \|  \\
	& =  \sup_{n \ge 1} E \sup_{f : \| f \|_{\ell_2} = 1}
	\left|
	\sum_{\nu=1}^{L_n} \sum_{\mu=1}^{L_n}  f_\nu  f_\mu 
	(C_{\nu \mu}^T - C_{\nu \mu}^M )
	\right| \\
	& \le 
	\sup_{n \ge 1} E  \sup_{1 \le \nu, \mu} | C^T_{\nu \mu} - C^M_{\nu\mu} | \sup_{f : \| f \|_{\ell_2} = 1} 
	\sum_{\nu=1}^{L_n} \sum_{\mu=1}^{L_n} | f_\nu f_\mu | \\
	& \stackrel{n',m'}{\ll} L_{n}^{-1} (n')^{-\theta/2} \left( \sum_{\nu=1}^{L_n} | f_\nu | \right)^2 
	\stackrel{n',m'}{\ll} (n')^{-\theta/2},
	\end{align*}
	By Lemma~\ref{HLemmaCov}, see (\ref{ConvCovCondF_m}), and the scaling of the martingale approximations, $ M_{n',m'}^{(n)}( \nu ) $, $ 1 \le \nu \le L_n $, by the factor $ (L_n n' )^{-1/2} $,
	\[
	\sup_{n \ge 1} L_n \max_{1 \le \nu, \mu \le L_n} \| E[ M_{n',m'}^{(n)}( \nu ) M_{n',m'}^{(n)}( \mu ) | \calF_{m'} ]
	- L_n^{-1} \beta_n^2( \nu, \mu ) \|_{L_1} \stackrel{n',m'}{\ll} (n')^{-\theta}.
	\]
	Therefore 
	\begin{align*}
	\Psi_{n',m'} & = 
	\sup_{n \ge 1} E \| \overline{\calC}_{n',m'}^{(n)}( \cdot | \calF_m )
	- \calC^{(n)}( \cdot ) \| \\
	& \le  \sup_{n \ge 1} E \sup_{\| f \|_{\ell_2} = 1} \left|
	\sum_{\nu=1}^{L_n} \sum_{\mu=1}^{L_n} 
	f_\nu f_\mu ( C_{\nu\mu}^M - L_n^{-1} \beta_n^2( \nu, \mu )) 
	\right| \\
	& \le \sup_{n \ge 1}  E \sup_{\| f \|_{\ell_2}=1}  \sup_{1 \le \nu, \mu} | C_{\nu\mu}^M - L_n^{-1} \beta_n^2( \nu, \mu ) |  
	\left( \sum_{\nu=1}^{L_n} | f_\nu | \right)^2  \\
	& \stackrel{n',m'}{\ll}  \sup_{n \ge 1}  L_n^{-1} (n')^{-\theta} \sup_{ \| f \|_{\ell_2} = 1 } L_n \sum_{\nu=1}^{L_n} f_\nu^2  
	\stackrel{n',m'}{\ll} (n')^{-\theta}.
	\end{align*}
\end{proof}


\begin{proof}[Proof of Theorem~\ref{Th_Many_QFs}]
	By virtue of Lemma~\ref{LemmaRepr}, Equation (\ref{ReprSum}), we have the representations
	\begin{align*}
	D_{nt} =  \sum_{k \le t}  \left( \xi_k^{(n)}( j )  \right)_{j=1}^{L_n}, \quad
	\calD_n(t) 
	=  \frac{1}{\sqrt{n L_n}} \sum_{k \le \trunc{nt}}  \left( \xi_k^{(n)}( j )  \right)_{j=1}^{L_n},
	\end{align*}
	and therefore we check conditions (I) -- (III) of \cite{Philipp1986} discussed above
	for $ \zeta_k = L_n^{-1/2} \xi_k^{(n)} $, cf. (\ref{DefDLN}). The summands  can be seen as attaining values in the Euclidean space $ \R^{L_n} $ of finite (but increasing in $n$) dimension $ L_n $  or as random elements taking values in the infinite dimensional Hilbert space $ \ell_2 $.
	
	To show (I) observe that by the $C_r$--inequality, for each $ 1 \le j \le L_n $,
	\begin{align*}
	E | \xi_k^{(n)}(j) |^{2+\delta}
	& \le E ( | Y_{nk}( \vecv_n^{(j)} ) Y_{nk}( \vecw_n^{(j)} ) | + E | Y_{nk}( \vecv_n^{(j)} ) Y_{nk}( \vecw_n^{(j)} ) | )^{2+\delta} \\
	& \le 2^{3+\delta} E | Y_{nk}( \vecv_n^{(j)} ) Y_{nk}( \vecw_n^{(j)} ) |^{2+\delta},
	\end{align*}
	such that
	\begin{equation}
	\label{EstimateMomentsY}
	 E | \xi_k^{(n)}(j) |^{2+\delta} \le 2^{3+\delta} \sqrt{ E | Y_{nk}( \vecv_n^{(j)} ) |^{4+2\delta} } \sqrt{ E | Y_{nk}( \vecw_n^{(j)} ) |^{4+2\delta} }.
	\end{equation}
	Repeating the arguments of \cite[p.~343]{Kouritzin1995}, we obtain for $ \delta' \in (0,2) $ with $ \chi = \delta'/2 $
	\begin{align*}
	E | Y_{nk}( \vecv_n^{(j)} ) |^{4+\delta'}
	& \le \sup_{k' \ge 0} E | \epsilon_{k'} |^{4+\delta'} 
	\sum_{l=0}^\infty | c_{n\ell}^{(v_j)} |^{2(2+\chi)} \\
	& \quad + \sup_{k' \ge 0} E( \epsilon_{k'}^2 )
	\biggl\{
	\sup_{k' \ge 1} E( \epsilon_{k'}^2 )
	\biggr\}^{1+\chi} 
	\sum_{\ell=0}^\infty | c_{n\ell}^{(v_j)} |^2
	\biggl\{ \sum_{\ell'=0}^\infty | c_{n\ell}^{(v_j)} |^2  \biggr\}^{1+\chi},
	\end{align*} 
	uniformly in $ k \ge 1 $.
	But  $ \sup_{1 \le j} | c_{n\ell}^{(v_j)} | \le \| \vecv_n^{(j)} \|_{\ell_1} \sup_{1 \le \nu} | c_{n\ell}^{(\nu)} | $
	and   $ \sup_{1 \le \nu} | c_{n\ell}^{(\nu)} | \le (l \vee 1)^{-3/4-\theta/2} $,  due to Assumption (A), imply
	$ \sup_{1 \le j} \sum_{\ell=0}^\infty | c_{n\ell}^{(v_j)} |^2 < \infty $ and, in turn, 
	$ \sum_{l=0}^\infty | c_{n\ell}^{(v_j)} |^{2(2+\chi)} < \infty $. Noting that the above bounds hold uniformly in $ k $ and $n$, we obtain
	\[
	  \sup_{n \ge 1}  \sup_{k \ge 1} \max_{1 \le j \le L_n} E | \xi_k^{(n)}(j) |^{2+ \delta} < \infty.
	\]
	By virtue of Jensen's inequality, we may now conclude that
	\begin{align*}
	  E \| L_n^{-1/2} \xi_k^{(n)} \|_{\ell_2}^{2+\delta}
	    & = E \left( \frac{1}{L_n} \sum_{j=1}^{L_n} [ \xi_k^{(n)}( j ) ]^2 \right)^{1+\delta/2} 
	     \le L_n^{-1} \sum_{j=1}^{L_n} E | \xi_k^{(n)}(j) |^{2+\delta} 
	     < \infty,
	\end{align*}
	which establishes (I).  
	
	Introduce the partial sums
	\begin{equation}
	\label{Snm}
	S_{n',m'}^{(n)} = \frac{1}{\sqrt{n' L_n}} \sum_{k=m'+1}^{m'+n'} \xi_k^{(n)}, \qquad n', m' \ge 1.
	\end{equation}
	Condition (II) can be shown as follows. 
	Denote the coordinates of $ S^{(n)}_{n',m'} $ by  $  S^{(n)}_{n',m'}(j) $ and notice that 
	they are given by $  S^{(n)}_{n',m'}(j) = \sum_{k=m'+1}^{m'+n'} L_n^{-1/2} \xi_k^{(n)}(j) $. Denote the
	corresponding martingale approximations by $  M^{(n)}_{n',m'} $ and $ M^{(n)}_{n',m'}(j) $, respectively,
	and let $ R^{(n)}_{n',m'} = S^{(n)}_{n',m'}  - M^{(n)}_{n',m'}  $ be the remainder 
	with coordinates $ R^{(n)}_{n',m'}(j) $, $ 1 \le j \le L_n $, cf. the preparations above. Clearly, the martingale property
	implies $ E( S^{(n)}_{n',m'}(j) | \calF_{m'} ) = E( R^{(n)}_{n',m'}(j) | \calF_{m'} ) $, $ 1 \le j \le L_n $.
	Lemma~\ref{Lemma_Martingale_Approx} asserts that
	\[
	\sup_{n \ge 1} L_n E \left[ \sup_{1 \le j}  E\left( ( R_{n',m'}^{(n)}(j) )^2 \, | \, \calF_{m'} \right) \right]
	\stackrel{n',m'}{\ll} (n')^{-\theta},
	\]
	such that two applications of Jensen's inequality lead to 
	\begin{align*}
	\sup_{n \ge 1} E \left| E ( S_{n',m'}^{(n)} \, | \, \calF_{m'} ) \right|_{\ell_2}
	& \le \sup_{n \ge 1}  \sqrt{  \sum_{j=1}^{L_n} E \left[   E( R_{n',m'}^{(n)}(j) \, | \, \calF_{m'} )  \right]^2 } \\
	& \le \sup_{n \ge 1}  \sqrt{  \sum_{j=1}^{L_n} E \left[ E \left( (R_{n',m'}^{(n)}(j))^2 \, | \, \calF_{m'} \right) \right] }\\
	& \le \sup_{n \ge 1} \sqrt{  L_n E \left[ \sup_{1 \le j} E \left( (R_{n',m'}^{(n)}(j))^2 \, | \, \calF_{m'} \right) \right] }\\
	& \stackrel{n',m'}{\ll} (n')^{-\theta/2},
	\end{align*}  
	which shows (II).   Condition (III) follows from Theorem~\ref{ThConvCovOp}. 
	
	Consequently, we may conclude that we may redefine all processes on a rich enough probability space where a Brownian motion $ B_n(t) = ( B_n(t)_j )_j $ with covariance operator $ \calC^{(n)} $, i.e. with covariances $ E ( B_n(t)_j B_n(t)_k ) = L_n^{-1} \beta_n^2(j,k) $, exists, such that for constants $ \lambda_n > 0 $ and $ C_n < \infty $
	\[
	\| L_n^{-1/2} D_{n,t}( \vecv_n, \vecw_n ) - B_n( t ) \|_{\R^{L_n}} 
	\le C_n t^{1/2-\lambda_n}, \qquad \text{for all $t\ge 0$},
	\]
	a.s.. Therefore
	\[
	\sup_{t \in [0,1]} \| \calD_n(t) - \overline{B}_n( \trunc{nt}/n ) \|_{\R^{L_n}} \le C_n n^{-\lambda_n}, 
	\] 
	a.s., for the $ [0,1] $--version $ \overline{B}_n $ of $B_n$, which implies assertions (i) and (ii). 
	
	
	To show (iii) recall that the vector 1--norm of $ \R^{L_n} $ can be bounded by $ L_n^{-1/2} \| \cdot  \|_{\R_n^{L_n}} $, such that
	\[
	\sum_{j=1}^{L_n} | \calD_{nj}(t) - \overline{B}_n\left( \trunc{nt}/n \right)_j |
	\le L_n^{1/2} \| \calD_n(t) - \overline{B}_n( \trunc{nt}/n ) \|_{\ell_2} = o(L_n^{1/2}),
	\]
	as $ n \to \infty $, a.s. Further, using $ |x_j| 
	\le \sqrt{ \sum_j x_j^2 } $, we have
	\[
	\sup_{t \in [0,1]} | \calD_n(t; \vecv_n^{(j)}, \vecw_n^{(j)} ) - \overline{B}_n( \trunc{nt}/n )_j | \le \sup_{t \in [0,1]} \| \calD_n(t) - \overline{B}_n( \trunc{nt}/n ) \|_{\ell_2}
	= o(1),
	\]
	as $ n \to \infty $, a.s.,  for $ j = 1, \dots, L_n $.
		
	It remains to prove (iv).
	We may argue as in \cite{StelandvonSachs2016} to obtain
	\begin{align*}
	\vecv_n^{(\rho)}{}' ( \wh{\bfSigma}_{nmk} - \bfSigma_{nmk} ) \vecw_m^{(\sigma)}
	&=
	\sum_{i \le k} [ ( \vecv_n^{(\rho)}{}' \vecY_{ni} )( \vecw_m^{(\sigma)}{}' \vecY_{mi} ) - 
	E( ( \vecv_n^{(\rho)}{}' \vecY_{ni} )( \vecw_m^{(\sigma)}{}' \vecY_{mi} )  ) ] \\
	&= \sum_{i \le k} [ Y_{ni}( \vecv_n^{(\rho)} ){}' Y_{mi}( \vecw_m^{(\sigma)} ) - 
	E( Y_{ni}( \vecv_n^{(\rho)} ){}' Y_{mi}( \vecw_m^{(\sigma)} ) ) ]
	\end{align*}
	where 
	\[ 
	Y_{ni}( \vecv_n^{(\rho)} )  = \sum_{j=0}^\infty \sum_{\nu=1}^{d_n} v_{n\nu}^{(\rho)} c_{nj}^{(\nu)} \epsilon_{i-j} \quad \text{and} \quad
	Y_{mi}( \vecw_m^{(\sigma)} ) = \sum_{j=0}^{\infty} \sum_{\mu=1}^{d_n} w_{m\mu}^{(\sigma)} c_{mj}^{(\mu)} \epsilon_{i-j},
	\]
	for $ \rho = 1, \dots, L_n $ and  $ \sigma = 1, \dots, L_m $, $n, m \ge 1 $. Therefore, for $ k \ge 1 $, we obtain the representation
	\begin{align*}
	& D_k(  \{ (\vecv_n^{(\rho)}, \vecw_n^{(\rho)} ) \} ) \\
	& \qquad = \sum_{i \le k} \sum_{n,m} \sum_{\rho,\sigma} \lambda_n \lambda_m \mu_\rho \mu_\sigma
	[ \vecY_{ni}( \vecv_n^{(\rho)} ){}' \vecY_{mi}( \vecw_m^{(\sigma)} ) -
	E( \vecY_{ni}( \vecv_n^{(\rho)} ){}' \vecY_{mi}( \vecw_m^{(\sigma)} )  )  ] \\
	& \qquad = \sum_{i \le k} [Y_i( \{ c_j \} ) Y_i( \{ d_j \} ) - E( Y_i( \{ c_j \} ) Y_i( \{ d_j \} )  ) ]
	\end{align*}
	for the linear processes $ Y_i( \{ c_j \} ) = \sum_{j=0}^\infty c_j \epsilon_{i-j} $ and
	$ Y_i( \{ d_j \} ) = \sum_{j=0}^\infty d_j \epsilon_{i-j} $ with coefficients 
	\begin{align*}
	c_j & = \sum_{n=1}^\infty \lambda_n \sum_{\rho = 1}^{L_n} \mu_\rho \sum_{\nu=1}^{d_n} v_{n\nu}^{(\rho)} c_{nj}^{(\nu)}, \qquad j \ge 0 , \\
	d_j & = \sum_{n=1}^\infty \lambda_n \sum_{\rho = 1}^{L_n} \mu_\rho \sum_{\nu=1}^{d_n} w_{n\nu}^{(\rho)} c_{nj}^{(\nu)}, \qquad j \ge 0.
	\end{align*}
	Hence the result follows from \cite{Kouritzin1995}.
\end{proof}

\begin{proof}[Proof of Remark~\ref{RemarkBBConstr}]
	By Theorem~\ref{Th_Basic},  we may and will assume that, on the same probability space, 
	\[
	| \calD_n(t; \vecv_n^{(j)}, \vecw_n^{(j)} ) - \alpha_n(\vecv_n^{(j)}, \vecw_n^{(j)} ) W_n( \trunc{nt}/{n}; \vecv_n^{(j)}, \vecw_n^{(j)} ) | = o(1),
	\]
	as $ n \to \infty $, a.s., for $L_n$ standard Brownian motions $ W_n( \trunc{nt}/{n}; \vecv_n^{(j)}, \vecw_n^{(j)} ) $, by virtue of Theorem~\ref{Th_Basic}. But then, since $ \calD_{nj}(t) = L_n^{-1/2} \calD_n( t; \vecv_n^{(j)}, \vecw_n^{(j)} ) $,
	\begin{align*}
	&  
	\sup_{t \in [0,1]} \left|  \overline{B}_n( \trunc{nt}/n )_j  - L_n^{-1/2} \alpha_n(\vecv_n^{(j)}, \vecw_n^{(j)} ) W_n( \trunc{nt}/{n}; \vecv_n^{(j)}, \vecw_n^{(j)} ) \right| \\
	&    
	\le \sup_{t \in [0,1]}   | \overline{B}_n( \trunc{nt}/n )_j  - \calD_{nj}(t) |  \\
	& + 
	\sup_{t \in [0,1]}   L_n^{-1/2} |  \calD_n( t; \vecv_n^{(j)}, \vecw_n^{(j)} ) - \alpha_n(\vecv_n^{(j)}, \vecw_n^{(j)} ) W_n( \trunc{nt}/{n}; \vecv_n^{(j)}, \vecw_n^{(j)} ) | \\
	& = o(1),
	\end{align*}   
	as $ n \to \infty $, a.s., for each $ j = 1, \dots, L_n $, which verifies the remark. 
\end{proof}

\begin{proof}[Proof of Theorem~\ref{WeakApprox_ManyQFs}]
	Observe that the conditions (I)-(III) of \cite[Theorem~1]{Philipp1986} hold in the Hilbert space $ \ell_2 $ as well, since for any $ \vecx \in \R ^{d_n} $ the Euclidean vector norm coincides with the $ \ell_2 $--norm. Therefore, we obtain the a.s. strong approximation
	\[
	\left\| \sum_{i=1}^k L_n^{-1/2} \xi_i^{(n)} - B_n(k) \right\|_{\ell_2} = \epsilon_{nk} \sqrt{ k \log \log k },
	\]
	as $ k \to \infty $, for sequences $ \varepsilon_{nk} = o(1) $,  $ k \to \infty $, a.s., $n \ge 1 $. Put $ N = \lceil n \log \log n \rceil $. Let $ \eta_n \downarrow 0 $ be given. Then for each $ n \in \N $ we may find $ \delta_n \in \N $ such that
	$ P( \max_{\delta_n \le k'} \epsilon_{nk'} > \varepsilon ) \le \eta_n $. Hence $ \max_{\delta_n \le k'} \epsilon_{nk'} = o_P(1) $, as $ n \to \infty $.
	Now we may  conclude that for  $ \delta_n \le k \le n $ 
	\[
	\left| \sum_{i=1}^k L_n^{-1/2} \xi_i^{(n)} - B_n(k) \right| \le \max_{\delta_n \le k' } \varepsilon_{nk'} \sqrt{N},
	\]
	such that
	\[
	\max_{\delta_n \le k \le n} \frac{1}{\sqrt{N}} \left|  \sum_{i=1}^k L_n^{-1/2} \xi_i^{(n)} - B_n(k)  \right| = o_P(1),
	\]
	as $ n \to \infty $, which verifies (\ref{StrongApprox_HSpace 1})--(\ref{StrongApprox_HSpace 3}). 	
\end{proof}

\section{Asymptotics for the trace norm}
\label{Sec Asymptotics Trace}

The trace plays an important role in multivariate analysis and also arises when studying shrinkage estimation. Before providing the large sample approximation by a Brownian motion, we shall briefly review its relation to several matrix norms. 

\subsection{The trace and related matrix norms}

There are various matrix norms that can be used to measure the size of (covariance) matrices. Here we shall use the trace norm defined as the
$ \ell_1 $-norm of the eigenvalues $ \lambda_i( \matA ) $ of a $d_n$-dimensional matrix $ \matA $, 
\[
 \| \matA \|_{tr} = \sum_i | \lambda_i( \matA ) |.
\]
Also notice that the trace norm is a linear mapping on the subspace of non-negative definite matrices and satisfies $ \| \matA \|_{tr} = \text{tr}( \matA ) $ for any covariance matrix $ \matA $. It induces the Frobenius norm via  $\| \matA \|_F^2 = \text{tr}(\matA \matA' ) $. Further, it is worth mentioning that the trace norm is also related to the Frobenius norm via the fact
\[
  \| \matA^{1/2} \|_F^2 = \sum_i \lambda_i( \matA ) = \| \matA \|_{tr}.
\]
In this way, our results formulated in terms of (scaled) trace norms can be interpreted in terms of (scaled) squared Frobenius norms of square roots, too.

There is a third interesting direct link to another family of norms, namely the Schatten-p norms $ \| \matA \|_{S,p} $, $ p \ge 0 $, of a $n \times m$ matrix $ \matA $ of rank $r$, which is defined as the $ \ell_p $-norm of its (non-negative) singular values
$ \sigma_1( \matA ) \ge \cdots \ge \sigma_r(\matA) > \sigma_{r+1}( \matA ) = \cdots = \sigma_n(\matA) = 0 $, i.e. of the eigenvalues of $ | \matA | = (\matA \matA')^{1/2} $, i.e.
\[
  \| \matA \|_{S,p}^p = \sum_i \sigma_i( \matA )^p.
\]
The Schatten-1 norm $ \| \matA \|_{S,1} $ is also called nuclear norm.
Since $ \wh{\bfSigma}_n = \matA \matA' $, if $ \matA = \calY_n / \sqrt{n} $, such that $ \lambda_i( \wh{\bfSigma}_n ) = \sigma_i( \calY_n / \sqrt{n} )^2 $,  we have the identity
\[
  \| \wh{\bfSigma}_n \|_{tr} = \sum_i \lambda_i( \wh{\bfSigma}_n ) = \| \calY_n / \sqrt{n} \|_{S,2}^2.
\]
between the trace norm of the sample covariance matrix and the Schatten-$2$ norm of the scaled data matrix.

For a sequence $ \{ \matA_n \} $ of matrices of growing dimension $ d_n \times d_n $ it makes sense to attach a scalar weight depending on the dimension to a given norm, such that {\em simple} matrices such as the identity matrix receive bounded norms. Having in mind that the squared Frobenius norm of $ \matA_n $ is the trace of $ \matA_n \matA_n' $, it is natural to attach a scalar weight $ f(d_n) $ to the trace operator leading to the scalar weight $ f(d_n)^{1/2} $ for the Frobenius norm. As proposed by
\cite{LedoitWolf2004}, one may select $ f(d_n) $ such that 
$ \text{tr}( \matA^* ) f( d_n ) = 1 $ for some {\em simple} benchmark matrix $ \matA^* $ such as the $d_n$--dimensional identity matrix $ \matid_n $. Since $ \operatorname{tr}( \matid_{d_n} ) = d_n $, we choose $ f(d_n) = d_n^{-1} $ and therefore define the {\em scaled trace operator} by
\[
  \operatorname{tr}^*( \matA) = d_n^{-1} \sum_{i=1}^{d_n} \lambda_i( \matA )
\]
for a square matrix $ \matA = ( a_{ij} )_{i,j} $ of dimension $ d_n \times d_n $. The scaled trace operator induces the {\em scaled trace norm} 
\[
  \| \matA \|_{tr}^* = d_n^{-1} \| \matA \|_{tr}
\]
for a square matrix $ \matA $, which is given by  $ \| \matA \|_{tr}^* = d_n^{-1} \operatorname{tr}( \matA ) $ for a covariance matrix and averages the (modulus) of the eigenvalues, and the {\em scaled Frobenius matrix norm}  given by
\begin{equation}
\label{DefScaledFrobenius}
  \| \matA \|_F^{*2} = \operatorname{tr}^*( \matA \matA' ) = d_n^{-1} \| \matA \|_F^2.
\end{equation}

\subsection{Trace asymptotics}

Let us now turn to the trace asymptotics. If the dimension is fixed, it is well known that the eigenvalues of a sample covariance matrix, and thus their sum as well, have convergence rate $ O_P(n^{-1/2}) $ and are asymptotically normal,  see \cite{KolloNeudecker1993} and \cite{KolloNeudecker1994}. For the high-dimensional case, the situation is more involved. The sample covariance
matrix is not consistent w.r.t. to the Frobenius norm, even in the presence of
a dimension reducing factor model, see Remark~\ref{Remark_FactorModel}.

The following result provides a large sample normal approximation for the scaled trace norm of $ \wh{\bfSigma}_n $ for arbitrarily growing dimension $ d_n $ when properly normalized. The result also shows that the trace norm has convergence rate
\[
  \bigl| \|  \wh{\bfSigma}_n \|_{tr} - \| \bfSigma_n \|_{tr}  \bigr| = O_P( n^{-1/2} d_n ),
\]
as $ n \to \infty $. 

Introduce for $ t \in [0,1] $
\begin{equation}
\label{DefSigmaHat_t}
  \wh{\bfSigma}_n(t) = \frac{1}{n} \sum_{i=1}^{\trunc{nt}} \vecY_{ni} \vecY_{ni}', \qquad
  \bfSigma_n(t) = E( \wh{\bfSigma}_n(t)  ),
\end{equation}
and notice that
\[
  \wh{\bfSigma}_n = \wh{\bfSigma}_n(1) \qquad \text{and} \qquad \bfSigma_n = \bfSigma_n(1).
\]
We are interested in studying the scaled trace norm process
\begin{equation}
\label{ScaledTraceNormProces}
  \mathcal{T}_n(t) = \sqrt{n} \left( \| \wh{\bfSigma}_n(t) \|_{tr}^* - \| \bfSigma_n(t) \|_{tr}^* \right),
  \qquad t \in [0,1].
\end{equation}

\begin{theorem}
\label{Th_Trace} 
Let $ \vecY_{ni} $, $ i = 1, \dots, n $, be a vector time series following
model (\ref{ModelLinearProcess}) and satisfying Assumption (A). If (\ref{CrucialCond}) holds, then under the construction of
Theorem~\ref{Th_Many_QFs},
\[
  \sup_{t \in [0,1]} \left| \mathcal{T}_n(t) - d_n^{-1/2} \sum_{j=1}^{d_n} \overline{B}_{n}( \trunc{nt}/n )_j \right| = o(1),
\]
as $ n \to \infty $, a.s. Here $ \overline{B}_n $ denotes the $[0,1]$--version of the Brownian motion $B_n$ arising in Theorem~\ref{Th_Many_QFs}, when choosing the $ d_n $ pairs $ (\vecv_n^{(j)}, \vecw_n^{(j)}) = (\vece_j, \vece_j) $, $ j = 1, \dots, d_n $, where $ \vece_j $ denotes the $j$th unit vector, and satisfies properties (ii) and (iii) of Theorem~\ref{Th_Many_QFs}.
\end{theorem}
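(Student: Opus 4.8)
The plan is to reduce the scaled trace norm process entirely to an aggregation of the bilinear--form processes of Theorem~\ref{Th_Many_QFs} evaluated at the unit vectors, so that the assertion becomes a direct corollary of its $\ell_1$--norm conclusion (iii). The point is that the trace is a linear functional of the diagonal entries, and each diagonal entry is the bilinear form $\vece_j'(\cdot)\vece_j$, so no eigenvalue analysis is actually needed.

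First I would exploit non--negative definiteness. Both $\wh{\bfSigma}_n(t)$ and $\bfSigma_n(t) = E(\wh{\bfSigma}_n(t))$ are covariance matrices, hence positive semidefinite, so all their eigenvalues are non--negative and $\| \cdot \|_{tr}$ coincides with the ordinary trace, i.e.\ the sum of the diagonal entries. Writing the diagonal entries as bilinear forms with the unit vectors $\vece_j$, and recalling that $\wh{\bfSigma}_n(t) = n^{-1}\wh{\bfSigma}_{n,\trunc{nt}}$ with $\wh{\bfSigma}_{n,\trunc{nt}}$ the unscaled partial sum of (\ref{DefPSum1}), I obtain
\[
  \mathcal{T}_n(t) = \sqrt{n}\, d_n^{-1}\left( \operatorname{tr}\wh{\bfSigma}_n(t) - \operatorname{tr}\bfSigma_n(t) \right) = d_n^{-1} \sum_{j=1}^{d_n} \calD_n(t; \vece_j, \vece_j),
\]
since the scalar factors $\sqrt{n}$, $d_n^{-1}$ and $n^{-1}$ combine with the $n^{-1/2}$ inside $\calD_n$ (cf.\ (\ref{DefCadlagDNK})) to reproduce exactly the centered, $\sqrt{n}$--scaled partial sum of $(Y_{ni}^{(j)})^2 - E(Y_{ni}^{(j)})^2$.

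Next I would invoke Theorem~\ref{Th_Many_QFs} with the choice $L_n = d_n$ and the $d_n$ pairs $(\vecv_n^{(j)}, \vecw_n^{(j)}) = (\vece_j, \vece_j)$. The hypothesis of uniformly bounded $\ell_1$--norms is trivially met, since $\| \vece_j \|_{\ell_1} = 1$ for every $j$ and $n$. By the definition (\ref{DefDNSJ}), $\calD_{nj}(t) = L_n^{-1/2}\calD_n(t;\vece_j,\vece_j) = d_n^{-1/2}\calD_n(t;\vece_j,\vece_j)$, so the identity of the previous step rewrites as $\mathcal{T}_n(t) = d_n^{-1/2}\sum_{j=1}^{d_n} \calD_{nj}(t)$. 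Subtracting $d_n^{-1/2}\sum_{j=1}^{d_n}\overline{B}_n(\trunc{nt}/n)_j$ and applying the triangle inequality then gives
\[
  \sup_{t \in [0,1]} \left| \mathcal{T}_n(t) - d_n^{-1/2}\sum_{j=1}^{d_n}\overline{B}_n(\trunc{nt}/n)_j \right|
  \le \sup_{t\in[0,1]} \frac{1}{\sqrt{d_n}}\sum_{j=1}^{d_n} \left| \calD_{nj}(t) - \overline{B}_n(\trunc{nt}/n)_j \right|,
\]
and the right--hand side is precisely the quantity shown to be $o(1)$ a.s.\ in conclusion (iii) of Theorem~\ref{Th_Many_QFs} (its $\ell_1$--norm statement), under the standing condition (\ref{CrucialCond}). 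This closes the argument.

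I do not expect a genuine analytic obstacle here: all the probabilistic content is already contained in Theorem~\ref{Th_Many_QFs}, and the construction of the $B_n$ is inherited from there with the unit--vector pairs. The only points requiring care are bookkeeping ones --- verifying that the three scalar factors collapse to the correct $d_n^{-1/2}$ normalization, and recognizing that it is exactly the $\ell_1$--aggregation in (iii), rather than the $\ell_2$ statement (ii), that matches the summation defining the trace. As a byproduct, evaluating at $t=1$ and noting that $d_n^{-1/2}\sum_{j=1}^{d_n} \overline{B}_n(1)_j$ is centered Gaussian with variance $d_n^{-2}\sum_{j,k=1}^{d_n}\beta_n^2(j,k) = O(1)$ (the $\beta_n^2(j,k)$ being uniformly bounded for $\ell_1$--bounded vectors) yields the stated rate $\bigl|\, \| \wh{\bfSigma}_n \|_{tr} - \| \bfSigma_n \|_{tr} \,\bigr| = O_P( n^{-1/2} d_n )$.
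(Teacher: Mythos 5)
Your proof is correct and follows essentially the same route as the paper's: reduce the scaled trace-norm process to $ d_n^{-1/2}\sum_{j=1}^{d_n}\calD_{nj}(t) $ via the diagonal bilinear forms $ \vece_j'(\cdot)\vece_j $, apply Theorem~\ref{Th_Many_QFs} with $ L_n = d_n $ and the pairs $ (\vece_j,\vece_j) $, and conclude by the triangle inequality together with the $ \ell_1 $--aggregation bound. You even cite the correct conclusion (iii) for the final step, whereas the paper's proof refers to (iv) --- an apparent typo, since the bound used there is precisely the $ \ell_1 $--norm statement of (iii).
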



Suppose that, in addition to the assumptions of Theorem~\ref{Th_Trace}, $ \vecY_{n1}, \dots, \vecY_{nn} $ is strictly stationary.
Since the weighting vectors used in Theorem~\ref{Th_Trace} are the first $d_n$ unit vectors, the  covariance of $ \overline{B}_n(1)_i $ and $ \overline{B}_n(1)_j $, which is associated to the asymptotic covariance of $ \calD_{ni}(1)_i $ and $ \calD_{nj}(1)_j $, is given by 
$ d_n^{-1} \beta_n^2(i,j) $ where  $ \beta_n^2(i,j) = \beta_n^2( \vece_i, \vece_i, \vece_j, \vece_j ) $, $ i, j = 1, \dots, d_n $,
cf. (\ref{BetaNuMu}).
We have the asymptotic representations
\begin{align*}
  \beta_n^2(i, j)
  & =  \Cov\left( \sqrt{n} \vece_i'( \wh{\bfSigma}_n - \bfSigma_n) \vece_i, \sqrt{n} \vece_j'( \wh{\bfSigma}_n - \bfSigma_n) \vece_j \right) + o(1) \\
  & = \Cov\left( \frac{1}{\sqrt{n}} \sum_{k=1}^n [ ( Y_{nk}^{(i)} )^2 - \sigma^2 ], 
  \frac{1}{\sqrt{n}} \sum_{k=1}^n [ ( Y_{nk}^{(j)} )^2 - \sigma^2 ] \right)  + o(1) \\
  & = \frac{1}{n} \sum_{k,k'=1}^n E \left( ( Y_{nk}^{(i)} )^2 - \sigma^2 \right)\left( ( Y_{nk'}^{(j)} )^2 - \sigma^2 \right) + o(1).
  \end{align*}
Therefore, up to negligible terms, we may express $ \beta_n^2(i,j) $ as a  long--run variance parameter,
\begin{equation}
\label{DefLRVParameters}
  \beta_n^2( i, j )  = 
    \gamma_n^{(i,j)}( 0 ) + 2 \sum_{\tau=1}^{n-1} \frac{n-\tau}{n} \gamma_n^{(i,j)}( \tau ) + o(1),
\end{equation}
where
\begin{equation}
\label{DefCrossCovBeta}
  \gamma_n^{(i,j)}( \tau ) = \Cov\left( ( Y_{n0}^{(i)} )^2, ( Y_{n\tau}^{(j)} )^2 \right), \qquad \tau = 0, \dots, n-1,
\end{equation}
is the lag $ \tau $ cross--covariance of the series $ \{ ( Y_{nk}^{(i)} )^2 : k \ge 0 \} $ and $ \{ ( Y_{nk}^{(j)} )^2 : k \ge 0 \} $, $ i, j = 1, \dots, d_n $. Those cross-covariances can be estimated by
\[
  \wh{\gamma}_n^{(i,j)}( \tau ) = \frac{1}{n} \sum_{k=1}^{n-\tau}
  [ Y_k^2( \vece_i ) - \wh{\mu}_n(i) ]
  [ Y_{k+\tau}^2( \vece_j ) - \wh{\mu}_n(j)  ]
\]
with $ \wh{\mu}_n(i) = n^{-1} \sum_{k=1}^n Y_k^2( \vece_i ) $, where $ Y_k(\vece_i ) = \vece_i' \vecY_{nk} $, for $ k = 1, \dots, n $, $ i = 1, \dots, d_n $, $ n \ge 1 $.
The associated estimator for $ \beta_n^2(i,j) $ is then given by
\[
\wh\beta_{n}^2(i,j) = \wh\bfgamma_{n}^{(i,j)}(0) + 2 \sum_{\tau=1}^{m} w_{m\tau} \wh\bfgamma_{n}^{(i, j)}(\tau)\ ,
\]
where $m=m_n$ is a sequence of lag truncation constants and $\{w_{mh}\}$ a sequence of window weights typically defined by a kernel function $w$ (e.g. a Bartlett kernel) via $w_{m\tau}= w(\tau/b_m)$, for some bandwidth parameter $b=b_m$.

By Theorem~\ref{Th_Trace}, 
\[
  \Var( \calT_n(1) ) = \sigma_{tr,n}^2 + o(1), 
\]
a.s., with
\begin{equation}
\label{DefSigmaTrace}
\sigma_{tr,n}^2 =  \frac{1}{d_n} \sum_{j,k=1}^{d_n} \Cov( \overline{B}_n(1)_j, \overline{B}_n(1)_k ),
\end{equation}
and, using the canonical estimator
\begin{equation}
\label{DefEstimatorSigmaTrace}
  \wh{\sigma}_{tr,n}^2 = \frac{1}{d_n^2} \sum_{j,k = 1}^{d_n} \wh{\beta}_n^2(i,j),
\end{equation}
an asymptotic confidence interval with nominal coverage probability $ 1- \alpha $, $ \alpha \in (0,1) $, for $ \| \bfSigma_n \|_{tr}^* $ is given by
\[
  \| \wh{\bfSigma}_n \|_{tr}^* \pm \Phi^{-1}(1-\alpha/2) \wh{\sigma}_{tr,n} / \sqrt{n}.
\]

\begin{lemma} 
	\label{ConsistencyTraceVar}
	Assume $ w_{m\tau} \to 1 $ as $ m \to \infty $, for all $ \tau \in \Z $, and $ 0 \le w_{m\tau} \le W < \infty $, for some constant $W$, for all $ m \ge 1 $, $ \tau \in \Z $. Further,  suppose that $ c_{nj}^{(\nu)} = c_j^{(\nu)} $, $ \nu \in \N $, satisfy the decay condition
\[
  \sup_{1 \le \nu} | c_j^{(\nu)} | \ll (j \vee 1)^{-(1+\delta)}
\]
for some $ \delta > 0 $, and $ \epsilon_k  $ are i.i.d. with $ E( \epsilon_1^8 ) < \infty $. If $ m = m_n \to \infty $ with $ m^2/n = o(1) $, as $ n \to \infty $, then
\[
  \lim_{n \to \infty} E | \wh{\sigma}_{n,tr}^2 - \sigma_{n,tr}^2 | = 0.
\]
\end{lemma}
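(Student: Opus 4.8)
The plan is to eliminate the dimension $d_n$ at the outset and reduce the assertion to a uniform, one-dimensional spectral-consistency statement. Using the covariance formula (\ref{CovariancesBB}) with $s=t=1$ and $L_n=d_n$ we have $\Cov(\overline B_n(1)_j,\overline B_n(1)_k)=d_n^{-1}\beta_n^2(j,k)$, so both $\sigma_{n,tr}^2$ in (\ref{DefSigmaTrace}) and its estimator (\ref{DefEstimatorSigmaTrace}) are averages over the $d_n^2$ coordinate pairs of $d_n^{-2}\beta_n^2(i,j)$ and $d_n^{-2}\wh\beta_n^2(i,j)$, respectively. Hence the triangle inequality gives
\[
  E\bigl| \wh\sigma_{n,tr}^2 - \sigma_{n,tr}^2 \bigr| \le \frac{1}{d_n^2}\sum_{i,j=1}^{d_n} E\bigl| \wh\beta_n^2(i,j) - \beta_n^2(i,j) \bigr| \le \sup_{i,j\ge 1} E\bigl| \wh\beta_n^2(i,j) - \beta_n^2(i,j) \bigr|,
\]
and it suffices to prove that the right-hand side tends to $0$. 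The virtue of this reduction is that $d_n$ disappears completely, and we are left with proving $L_1$-consistency of a single lag-window long-run variance estimator, uniformly in $(i,j)$.

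The structural input driving the uniformity is a summability estimate I would establish first. Under the decay condition $\sup_{\nu}|c_j^{(\nu)}|\ll (j\vee 1)^{-(1+\delta)}$, each coordinate $Y_k^{(\nu)}$ is a short-memory linear process with absolutely summable coefficients, uniformly in $\nu$; since $c_{nj}^{(\nu)}=c_j^{(\nu)}$ does not depend on $n$ and the sequence is strictly stationary, the cross-covariances $\gamma^{(i,j)}(\tau)$ of (\ref{DefCrossCovBeta}) are absolutely summable in $\tau$ uniformly in $(i,j)$, and with i.i.d.\ errors satisfying $E(\epsilon_1^8)<\infty$ the same holds for the relevant fourth-order cumulants of the squared processes. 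In particular $\beta_n^2(i,j)$ in (\ref{DefLRVParameters}) converges to $\gamma^{(i,j)}(0)+2\sum_{\tau\ge 1}\gamma^{(i,j)}(\tau)$, uniformly in $(i,j)$. For the bias I would split $\wh\beta_n^2(i,j)-\beta_n^2(i,j)$ into a deterministic and a stochastic part. The mean-corrected sample cross-covariance satisfies $E\wh\gamma_n^{(i,j)}(\tau)=\tfrac{n-\tau}{n}\gamma^{(i,j)}(\tau)+O(n^{-1})$ uniformly, the $O(n^{-1})$ coming from centering at $\wh\mu_n$; inserting the window weights, using $0\le w_{m\tau}\le W$, $w_{m\tau}\to 1$ as $m\to\infty$, and the uniform summability of $\{\gamma^{(i,j)}(\tau)\}$, dominated convergence shows that the deterministic part tends to $0$ uniformly in $(i,j)$.

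For the stochastic part I would use $E|\wh\beta_n^2(i,j)-E\wh\beta_n^2(i,j)|\le \sqrt{\Var(\wh\beta_n^2(i,j))}$ and the crude bound
\[
  \Var\bigl(\wh\beta_n^2(i,j)\bigr)\le 4\sum_{\tau,\tau'=0}^{m} w_{m\tau}w_{m\tau'}\bigl|\Cov\bigl(\wh\gamma_n^{(i,j)}(\tau),\wh\gamma_n^{(i,j)}(\tau')\bigr)\bigr|.
\]
Each covariance of sample cross-covariances is $O(n^{-1})$ uniformly in $(i,j,\tau,\tau')$, because $\Cov(\wh\gamma_n^{(i,j)}(\tau),\wh\gamma_n^{(i,j)}(\tau'))=n^{-2}\sum_{k,k'}\Cov(\cdot,\cdot)$ with summands that are summable in $k-k'$ by the previous paragraph, so the double time sum collapses to a factor $O(n)$. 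Since the weights are bounded by $W$ and the double sum has $(m+1)^2$ terms, this yields $\Var(\wh\beta_n^2(i,j))\ll m^2/n$ uniformly, which vanishes precisely under the hypothesis $m^2/n=o(1)$; this crude counting is exactly why the stated bandwidth condition is of order $m^2/n$ rather than $m/n$.

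The main obstacle is the uniform $O(n^{-1})$ control of $\Cov(\wh\gamma_n^{(i,j)}(\tau),\wh\gamma_n^{(i,j)}(\tau'))$: this is a fourth-order quantity in the squared processes, hence an eighth-order moment object in the underlying innovations, and it must be bounded simultaneously for all coordinate pairs $(i,j)$ and all lags $0\le\tau,\tau'\le m$. The key step is to expand the products into their innovation representation and to exploit the uniform coefficient decay $\sup_\nu|c_j^{(\nu)}|\ll (j\vee 1)^{-(1+\delta)}$ to show that the resulting cumulant series is absolutely summable in the time separation $|k-k'|$, uniformly in $(i,j,\tau,\tau')$, which is where both the eighth-moment assumption and the decay rate are consumed. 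Once this uniform cumulant bound is in hand, the reduction, the bias analysis and the $(m+1)^2$-term counting are routine, and combining the vanishing bias with the $O(\sqrt{m^2/n})$ stochastic bound in the displayed reduction completes the proof.
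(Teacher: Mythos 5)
Your proposal is correct, but it takes a different route from the paper in one important respect: the paper's own proof is a one-line citation. It observes that the coordinates of the approximating Brownian motion have covariances $d_n^{-1}\beta_n^2(i,j)$ and then invokes Theorem~4.4 of \cite{StelandvonSachs2016}, which already contains the uniform $L_1$-consistency of the lag-window estimators $\wh\beta_n^2(i,j)$ under exactly the hypotheses of the lemma (coefficients independent of $n$ with decay $(j\vee1)^{-(1+\delta)}$, i.i.d.\ innovations with eighth moments, $m^2/n\to0$). What you have done is reconstruct, from scratch, essentially the content of that cited theorem: your opening reduction $E|\wh\sigma_{tr,n}^2-\sigma_{tr,n}^2|\le\sup_{i,j}E|\wh\beta_n^2(i,j)-\beta_n^2(i,j)|$ is the same dimension-elimination step the paper performs implicitly via the covariance identification, and your bias/variance analysis (dominated convergence for the bias using uniform summability of $\gamma^{(i,j)}(\tau)$; the crude $(m+1)^2$-term counting giving $\Var(\wh\beta_n^2(i,j))\ll m^2/n$ from uniform $O(n^{-1})$ covariances of sample cross-covariances) is the standard machinery behind such consistency results, and it correctly explains why the bandwidth condition appears as $m^2/n=o(1)$. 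The trade-off is clear: the paper buys brevity at the cost of self-containedness, while your argument makes the lemma independent of the external reference and makes visible exactly where each hypothesis is consumed. Be aware, though, that the step you label as the main obstacle --- the uniform absolute summability of the eighth-order cumulant sums controlling $\Cov(\wh\gamma_n^{(i,j)}(\tau),\wh\gamma_n^{(i,j)}(\tau'))$ uniformly in $(i,j,\tau,\tau')$ --- is precisely the technical content that the paper's citation carries; in your write-up it is sketched rather than executed, so for a complete self-contained proof that expansion into innovation cumulants (including the correction terms from centering at $\wh\mu_n$) would still have to be written out.
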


\begin{remark}
	\label{Remark_FactorModel} It is worth comparing our result with the following result obtained  by \cite{FanFanLv2008} for a dimension-reducing factor model: Suppose that the generic random vector $ \vecY_n = (Y_1, \dots, Y_{d_n})' $ satisfies a factor model
	\[
	\vecY_n = \matB_n \vecf + \boldsymbol{\epsilon}
	\] 
	with $ K = K(d_n) \le d_n $ observable factors $ \vecf = (f_1, \dots, f_K)' $,
	errors $ \boldsymbol{\epsilon} = ( \epsilon_1, \dots, \epsilon_{d_n})' $ and a $ d_n \times K $ factor loading matrix $ \matB_n $. Then 
	the sample covariance matrix of an i.i.d. sample $(\vecY_1, \vecf_1), \dots, (\vecY_n, \vecf_n) $
	has the convergence rate $ O_P( n^{-1/2} d_n K ) $,
	\[
	\| \wh{\bfSigma}_n - \bfSigma_n \|_F = O_P( n^{-1/2} d_n K ),
	\]
	if $ E \| \vecY_n \|_F^2 $, $ \max_i E( f_i^4 ) $ and $ \max_i E( \epsilon_i^4 ) $ are bounded,
	see \cite[Theorem~1]{FanFanLv2008}. This means, compared to the rate for fixed dimension, the Frobenius norm is inflated by the factor $d_n K$.  
\end{remark}

\subsection{Proofs}

\begin{proof}[Proof of Theorem~\ref{Th_Trace}]
	Clearly, $ \wh{\bfSigma}_k(1) $ is p.s.d. for all $ k \in \N $ and thus $ \wh{\bfSigma}_n(t) = \frac{ \trunc{nt} }{n} \wh{\bfSigma}_{\trunc{nt}} $ as well. The fact that $ \text{tr}(\matA) = \sum_i \vece_i' \matA \vece_i $ leads to
	\begin{align*}
	\| \wh{\bfSigma}_n(t) \|_{tr} - \| \bfSigma_n(t) \|_{tr} 
	& = \text{tr}( \wh{\bfSigma}_n(t) ) - \text{tr}( \bfSigma_n(t) ) 
	= \sum_{j=1}^{d_n} \vece_j'( \wh{\bfSigma}_n(t) - \bfSigma_n(t) ) \vece_j.
	\end{align*}
	Let $ \calD_n = ( \calD_{nj} )_{j=1}^{d_n} $ with $ \calD_{nj}(t) = d_n^{-1/2} \calD_n(t; \vece_j, \vece_j) $ for $ j = 1, \dots, d_n $. We shall apply Theorem~\ref{Th_Many_QFs} with $ L_n = d_n $. Therefore, when redefining all processes on a new probability space together with a $d_n$-dimensional Brownian motion $ \overline{B}_n $ with covariances as described in Theorem~\ref{Th_Many_QFs}, we may argue as follows.
	Since $ \| \cdot \|_{tr}^*  = d_n^{-1} \text{tr}( \cdot ) $, we have
	\[
	\mathcal{T}_n(t) = \frac{1}{\sqrt{d_n}} \sum_{j=1}^{d_n} \calD_{nj}(t).
	\]
	Now we  can conclude that the process
	\[
	\calE_n(t) = \sqrt{n}( \| \wh{\bfSigma}_n(t) \|_{tr}^* - \| \bfSigma_n(t) \|_{tr}^* )
	- \sum_{j=1}^{d_n} d_n^{-1/2} \overline{B}_n( \trunc{nt}/n )_j
	\]
	satisfies
	\begin{align*}
	| \calE_n(t) |
	& = \frac{1}{\sqrt{d_n}} \left| \sum_{j=1}^{d_n} \left( \calD_{nj}(t) - \overline{B}_n( \trunc{nt}/n )_j \right) \right| \\
	& \le \frac{1}{\sqrt{d_n}} \sum_{i=1}^{d_n} \left|  \calD_{nj}(t) - \overline{B}_n( \trunc{nt}/n )_j \right| \\
	& = o(1),
	\end{align*}
	as $n \to \infty $, a.s., by Theorem~\ref{Th_Many_QFs} (iv).
\end{proof}

\begin{proof}[Proof of Lemma~\ref{ConsistencyTraceVar}]
The proof follows easily from \cite[Theorem~4.4]{StelandvonSachs2016} by noting that 
the covariances of the coordinates of the Brownian motion are given by $ d_n^{-1} \beta_n(i,j) $,
for $ 1 \le i, j \le d_n $.
\end{proof}

\section{Shrinkage estimation}
\label{Sec Asymptotics Shrinkage}

Shrinkage is a well-established approach to regularize the sample variance--covariance matrix and we shall review in Section~\ref{Subsec_Shrinkage} the results obtained for high--dimensional settings. When shrinking towards the identity matrix in terms of a convex combination with the sample variance--covariance matrix, the optimal weight depends on the trace of the true variance--covariance matrix, which can be estimated canonically by the trace of the sample variance--covariance matrix. As a consequence, we can apply the results obtained in the previous section to obtain large sample approximations for shrinkage variance--covariance matrix estimators. Recall that the approximations deal with the norm of the difference between partial sums and a Brownian motion, both attaining values in a vector space. In order to compare covariance matrices, we shall work with the following pseudometric: Define
\begin{equation}
\label{PseudoMetric}
  \Delta_n( \matA_n, \matB_n; \vecv_n, \vecw_n ) = | \vecv_n'( \matA_n - \matB_n ) \vecw_n |,
\end{equation}
for (sequences of) matrices $ \matA_n $ and $ \matB_n $ of dimension $ d_n \times d_n $. Indeed, for
fixed $ \vecv_n, \vecw_n $ the mapping $ ( \matA_n, \matB_n ) \mapsto \Delta_n( \matA_n , \matB_n ) = \Delta_n( \matA_n, \matB_n; \vecv_n, \vecw_n ) $ is symmetric, non-negative, semidefinite (i.e. $ \matA_n = \matB_n $ implies $ \Delta_n( \matA_n, \matB_n ) = 0 $) and satisfies the triangle inequality. Hence, (\ref{PseudoMetric}) defines a pseudometric on the space of $ (d_n \times d_n ) $--dimensional matrices, for each $n$.

We establish three main results: For {\em regular} weighting vectors $ \vecv_n, \vecw_n $ that are bounded away from orthogonality we establish a large sample approximation, which holds uniformly in the shrinkage weight and therefore also when using the common estimator for the optimal weight.  Further, we compare the shrinkage estimator using the estimated optimal weight with an oracle estimator using the unknown optimal weight. In both cases, it turns out that the convergence rate of the estimated optimal shrinkage weight carries over to the shrinkage covariance estimator.

Lastly, we study the case of orthogonal and {\em nearly orthogonal} vectors. The latter case  is of particular  interest, since then one may place more unit vectors on the unit sphere corresponding to overcomplete bases as studied in areas such as dictionary learning.

\subsection{Shrinkage of covariance matrix estimators}
\label{Subsec_Shrinkage}

The results of the previous chapters show that, under general conditions, inference relying on $ \ell_1 $-bounded inner products of high-dimensional series can be based on the sample covariance matrix, even if $ d_n > n $ such that $ \wh{\bfSigma}_n $ is singular. However, from a statistical point of view, the use of this classical estimator is not recommended in such situations of high dimensionality: important criteria such as its mean-squared error or its condition number (defined to be the ratio of the largest to the smallest eigenvalue) deteriorate, and it is advisable to regularise $ \wh{\bfSigma}_n $  in order to improve its performance, both asymptotically and for finite sample sizes, with respect to these criteria. Obviously, a particular interest lies in maximum-likelihood based approaches using an invertible estimator of  $ \bfSigma_n $ (as, e.g. in the semi-parametric approach of \cite{FiecasFrankeSachs2014} on shrinkage estimation in multivariate hidden Markov models). One well-established possibility to  regularise $ \wh{\bfSigma}_n $ without needing to impose any structural assumptions on  $\bfSigma_n $, in particular avoiding sparsity, is the following approach of shrinkage: (\cite{LedoitWolf2004}, \cite{Sancetta2008}) 
consider a shrinkage estimator defined by a linear (or convex) combination of  $ \wh{\bfSigma}_n $ with a well-conditioned "target" matrix ${\bf T}_n$,
\begin{equation}\label{eq:shrinkage_convec_comb}
\bfSigma_n^s =  \bfSigma_n^s(W_n) = (1-W_n) \wh{\bfSigma}_n\ +\ W_n\ {\bf T}_n\ ,
\end{equation}
where $W_n$ are the "shrinkage weights" of this convex combination, to be chosen in an optimal way to minimise the mean-square error between $ \bfSigma_n^s $ and $ \bfSigma_n $ (see below).
The role of the target ${\bf T}_n$ is, similar to ridge regression, to reduce a potentially large condition number of the high-dimensional variance-covariance matrix $ \bfSigma_n $, by adding a highly regular ("well conditioned") matrix. A popular choice for the target is to take a multiple of the $d_n-$dimensional identity matrix $\matid_n $, i.e.
\begin{equation}\label{eq:def_target}
{\bf T}_n = \mu_n\ \matid_n\ ,
\end{equation}
with $\mu_n = \frac{1}{d_n} \trace{\bfSigma}_n$, in order to respect the scale of both matrices in the convex combination (\ref{eq:shrinkage_convec_comb}). This choice of the target 
reduces the dispersion of the eigenvalues of $ \bfSigma_n $ around its "grand mean"  $\mu_n = \frac{1}{d_n} \trace{\bfSigma}_n$, as large eigenvalues are pulled down towards $\mu_n$ and small eigenvalues are lifted up to $\mu_n$ (and in particular lifted up away from zero). Although a bias is introduced in estimating $ \bfSigma_n $ by $  \bfSigma_n^s $ compared to $\wh{\bfSigma}_n\ $, the gain in variance reduction, in particular in high-dimensions, helps to considerably reduce the mean-square error in estimating $ \bfSigma_n $.

In order to develop the correct asymptotic framework of the behaviour of large covariance matrices, the authors of \cite{LedoitWolf2004} propose to use the {\em scaled Frobenius norm} given by (\ref{DefScaledFrobenius}) to measure the distance between two matrices of asymptotically growing dimension $d_n$, to be used also and in particular to define the mean-square error between $ \bfSigma_n^s $ and $ \bfSigma_n $  to become the expected normalised Frobenius loss $E[ \| \bfSigma_n^s - \bfSigma_n\|_F^{*2} ] $. 

Furthermore, with this scaling, 
\[
{\mu}_n = \frac{1}{d_n} \trace( {{\bfSigma}}_n) \ 
\]
is the appropriate choice of the factor in front of the identity matrix $ \matid_n $ in the definition of the target ${\bf T}_n $ in equation~(\ref{eq:def_target}).

In practice $\mu_n$ needs to be estimated from the trace of $ \wh{\bfSigma}_n $, i.e. by
\[
\wh{\mu}_n = \frac{1}{d_n} \trace({\wh{\bfSigma}}_n)\ .
\]
Similarly, the  theoretical shrinkage weight $0 \leq W_n \leq 1$ need to be replaced by its sample analog $\wh{W}_n$. Thus, the fully data-driven expression for the shrinkage estimator of $\bfSigma$ writes as follows:
\[
\bfSigma_n^s(\wh{W}_n)  = (1-\wh{W}_n) \wh{\bfSigma}_n\ +\ \wh{W}_n \wh{\mu}_n \matid_n,
\]
which shrinks the sample covariance matrix towards the (estimated) shrinkage target $ \wh{\mu}_n \matid_n $.
It remains to optimally choose the shrinkage weights $W_n$ (and its data-driven analogue  $\wh{W}_n$) with the purpose of balancing between a good fit and good regularisation.
For this a prominent possibility is indeed to choose the shrinkage weights $W_n$ such that the mean-squared error (MSE) between $ \bfSigma_n^s$ and $\bfSigma_n$,  is minimised:
\[
W_n^* = \text{argmin}_{W_n \in [0,1]} E[\| \bfSigma_n^s(W_n) - \bfSigma_n\|_F^{*2}] \ ,
\]
which leads to the MSE-optimally shrunken matrix $\bfSigma_n^* =  \bfSigma_n^s (W_n^*) $.
A closed form solution (\cite{LedoitWolf2004} or \cite{Sancetta2008}, Proposition 1) can be derived as
\[
W^*_n = \frac{ E[\| \wh\bfSigma_n - \bfSigma_n\|_F^{*2}]}{ E[\| \mu_n \matid_n - \wh\bfSigma_n\|_F^{*2}]}
\]
This choice leads to the interesting property that
\[
E[\| \bfSigma_n^* - \bfSigma_n \|_F^{*2} <  E[\| \wh{\bfSigma}_n - \bfSigma_n \|_F^{*2} \ ,
\]
showing the actual relative gain of the shrunken estimator compared to the classical unshrunken sample covariance, in terms of the mean-squared error.
Moreover, it can be shown that this property continues to hold even if one replaces the in practice yet unknown optimal weights $W_n^*$ by an estimator $  \wh{W}^*_n $ which is constructed by replacing the population quantities in numerator and denominator of  $W_n^*$ by sample analogs. Whereas the denominator can be essentially estimated by  $\| \wh\mu_n \matid_n - \wh\bfSigma_n\|_F^{*2}$, it is slightly less straightforward to estimate the numerator $ E[\| \wh\bfSigma_n - \bfSigma_n\|^2_F$: one possibility suggested by {\cite{Sancetta2008}, and further developed by {\cite{StelandvonSachs2016} for our set-up, is based on the estimation of the 
		long-run variance $\alpha_n$ of $\wh\bfSigma_n$. 
		
Note that
		\[
		E \| \wh{\bfSigma}_n - \bfSigma_n \|_{F}^{*2} =
		\frac{1}{n d_n}
		\sum_{i, j  = 1}^{d_n} \Var( \sqrt{n} \wh{\bfSigma}_n(i,j) ), 
		\quad 
		\wh{\bfSigma}_n(i,j) = \frac{1}{n} \sum_{k=1}^n 
		[ Y_{nk}^{(i)} Y_{nk}^{(j)} - E( Y_{nk}^{(i)} Y_{nk}^{(j)}  )  ].
		\]
		where, under stationarity, for $ 1 \le i, j \le d_n $,
		\[   
		\sigma_n^2(i,j) = \Var( \sqrt{n} \wh{\bfSigma}_n(i,j) ) = \Gamma_n^{(i,j)}(0) + 2 \sum_{\tau=1}^{n-1} (n-\tau) \Gamma_n^{(i,j)}( \tau )
		\] 
		with $ \Gamma_n^{(i,j)}( \tau ) = \Cov( Y_{nk}^{(i)} Y_{nk}^{(j)}, Y_{n,k+\tau}^{(i)} Y_{n,k+\tau}^{(j)} ) $, $ \tau = 0, \dots, n-1 $, $ n \ge 1 $.
        A consistent estimator of the optimal weights $ W_n^*$ can now be obtained  as follows. Let
		\[
		\wh\Gamma_{n}^{(i,j)}(\tau) = \frac{1}{n} \sum_{t=1}^{n-\tau} [Y_{nt}^{(i)} Y_{nt}^{(j)} - \wh{\kappa}_{n}(i,j)]\  [Y_{n,t+\tau}^{(i)} Y_{n,t+\tau}^{(j)} - \wh{\kappa}_{n}(i,j)]\ ,
		\]
		where $ \wh{\kappa}_n(i, j) =  \frac{1}{n} \sum_{t=1}^{n} Y_{nt}^{(i)}Y_{nt}^{(j)} $, $ 1 \le i, j \le d_n $. Then,
		similar as in the previous section, the long-run variances $ \sigma_n^2(i,j) $ can be estimated by
		\[
		\wh\sigma_{n}^2(i,j) = \wh\Gamma_{n}^{(i,j)}(0) + 2 \sum_{\tau=1}^{m} w_{m\tau} \wh\Gamma_{n}^{(i, j)}(\tau)\ ,
		\]
		$ 1 \le i, j \le d_n $.
		Consistency of a more general version has been shown in \cite[Equation (4.22)]{StelandvonSachs2016}, under similar assumptions as stated in Lemma~\ref{ConsistencyTraceVar}, for $d_n \to \infty$, as $n\to\infty$. 
		We are led to the estimator	
		\[
		\wh{W}_n^* = \frac{(nd_n)^{-1} \sum_{1\leq i,j\leq d_n} \wh\sigma_n^2(i,j) }{\| \wh\mu_n \matid_n - \wh\bfSigma_n\|_F^{*2}}
		\]
		for $ W_n^* $ also studied in depth in \cite{Sancetta2008}.
		A rate of consistency in an asymptotic framework with growing dimensionality $d_n$ can be achieved again following \cite{Sancetta2008} for the specific shrinkage target $\mu_n \matid_n$, also considered in \cite{FiecasFrankeSachs2014}. Let $ 0 < \gamma \leq 2$ be such that $ d_n^{2-\gamma} / n \to 0 $ (i.e. the larger $\gamma$, the faster is $d_n$ allowed to grow with $n$), and that, as $n \to \infty$,
		\[
		d_n^{1-\gamma} \| \mu_n \matid_n - \bfSigma_n \|_F^{*2} \to c > 0\ .
		\]
		Recalling that  $ \mu_n = d_n^{-1} \trace( {\bfSigma_n} ), $ we observe that $ \gamma $ measures the closeness of the target to the true covariance matrix $\bfSigma_n$. Then \cite{Sancetta2008} (Theorem 1) and (\cite{FiecasFrankeSachs2014} (Theorem 1) show that 
		\begin{equation}
		\label{RvSRate}
		\frac{n}{d_n^{2-\gamma}} \ | \wh{W}_n^* - W_n^* | = o_P(1)\ .
		\end{equation}
		
		In order to apply the results of the previous sections onto the fully data-driven shrinkage estimator $\bfSigma_n^*(\wh{W}_n^*)$, one needs to study the convergence of the estimated shrinkage weight normalised by $n^{1/2}$, as will become clear from the proof of Theorem~\ref{Th_Shrinkage1} to be stated below.
		
		We already observe here that (\ref{RvSRate}) implies
		\begin{equation} 
		\label{CondShr}
		n^{1/2} | \wh{W}_n^* - W_n^* | = o_P(1), 
		\end{equation}
		if $ d^{4-2\gamma} / n = O(1) $. Thus, for $ \gamma $ close to $2$, the dimension
		$d_n $ may even grow faster than $ n $.

\subsection{Asymptotics for regular projections}

Our interest is now in deriving the asymptotics for bilinear forms based on the
shrinkage estimator of the covariance matrix. We can and will assume that the
uniformly $ \ell_1 $--bounded weighting vectors $ \vecv_n $ and $ \vecw_n $
are $ \ell_2 $--normed. It turns out that, due to the shrinkage target, the inner 
product $ \vecv_n'\vecw_n $, i.e. the angle between the vectors $  \vecv_n $ and $ \vecw_n  $, appears in the approximating
Brownian functional. The inner product is bounded but may converge to $0$, as $ n $ tends to $ \infty $. The latter case requires special treatment and will be studied separately. We shall call a pair $ ( \vecv_n, \vecw_n ) $ of projections {\em regular}, if it has uniformly bounded $ \ell_1 $--norm and satisfies
\begin{equation}
\label{RegularProj}
  \vecv_n'\vecw_n  \ge \underline{c} > 0, \qquad \text{for all $n$},
\end{equation}
for some constant $ \underline{c} $, i.e., if it is, in addition, bounded away from orthogonality. 

Let $ 0 < W \le 1 $ be an arbitrary shrinkage weight and consider the associated
shrinkage estimator
\[
  \wh{\bfSigma}_n^s(W) = (1-W) \wh{\bfSigma}_n + W \wh{\mu}_n \matid_n.
\]
Notice that $ \wh{\bfSigma}_n^s(W) $ estimates the unobservable shrunken variance matrix
\[
  \bfSigma_{n0}^s(W) = (1-W) \bfSigma_n + W \text{tr}(\bfSigma_n) d_n^{-1} \matid_n.
\]
Define for $ 0 < W \le 1 $
\begin{equation}
\label{DefAn}
\calA_n(W) = \sqrt{n}  \vecv_n' ( \wh{\bfSigma}_n^s(W) - \bfSigma_{n0}^s(W) ) \vecw_n.
\end{equation}

We shall apply the trace asymptotics obtained in Theorem~\ref{Th_Trace}, i.e. 
\[
  \sqrt{n} ( \| \wh{\bfSigma}_n(t) \|_{tr}^* - \| \bfSigma_n \|_{tr}^*(t) ) = d_n^{-1/2} \sum_{j=1}^{d_n} \overline{B}_n( \trunc{nt}/n )_j 
   + o(1),
\]
as $ n \to \infty $, a.s. The variance of the approximating linear functional of the $d_n$--dimensional Brownian motion is given by
\[
  \sigma_{tr,n}^2 = \frac{1}{d_n} \sum_{i,j=1}^{d_n} \Cov( \overline{B}_n(1)_i, \overline{B}_n(1)_j ) =
  \frac{1}{d_n^2} \sum_{i, j = 1}^{d_n} \beta_n^2(i, j),
\]
where $ \beta_n^2(i, j) $ are long--run--variance parameters, see (\ref{DefLRVParameters}).
Since, typically, long--run--variance parameters have positive limits, it is natural to assume that
\begin{equation}
\label{ConditionShrinkageAsymp}
  \inf_{n \ge 1} \sigma_{tr,n}^2 \ge \underline{\sigma}_{tr}^2 > 0.
\end{equation}

\begin{theorem}
\label{Th_Shrinkage1}
Let $ ( \vecv_n, \vecw_n ) $ be a regular pair of projections. 
 Under the assumptions of  Theorem~\ref{Th_Trace} and condition  (\ref{ConditionShrinkageAsymp}) there exists on a new probability space, which carries
 an equivalent version of the vector time series, a Brownian motion
 $ \overline{B}_n(t) = ( \overline{B}_n(t)_j )_{j=0}^{d_n} $ on $ [0,1] $, such that
 \begin{equation}
 \label{UniformApproxShr}
   \sup_{0 < W \le 1} | \calA_n(W) - \calB_n(W) | = o(1),
 \end{equation}
as  $ n \to \infty $, a.s.,  where
\begin{equation}
\label{DefApproxFunctional}
  \calB_n(W) = (1-W)  \overline{B}_n( 1 )_0
  + W \vecv_n' \vecw_n d_n^{-1/2} \sum_{j=1}^{d_n} \overline{B}_n(1)_j,
\end{equation}
The covariance structure of $ \overline{B}_n(t) $ is given by
\[
  \Var(  \overline{B}_n(1)_0 ) = (d_n+1)^{-1} \alpha_n^2( \vecv_n, \vecw_n ), \quad
  \Cov( \overline{B}_n(1)_0, \overline{B}_n(1)_i ) = (d_n+1)^{-1} \beta_n^2( \vecv_n, \vecw_n, \vece_i, \vece_i ), 
\]
for $ i = 1, \dots, d_n $, and 
\[
  \Cov( \overline{B}_n(1)_i, \overline{B}_n(1)_j ) = (d_n+1)^{-1} \beta_n^2( i, j ), 
\]
for $ 1 \le i, j \le d_n $. Especially, for any deterministic or random sequence of
 shrinkage weights $ W_n $ we have the large sample approximation for the
 corresponding shrinkage estimator
 \[
   | \sqrt{n} \vecv_n' (\wh{\bfSigma}_n^s( W_n ) - {\bfSigma}_{n0}^s(W_n))\vecw_n - \calB_n( W_n ) | = o(1), 
 \]
as $ n \to \infty $, a.s..
\end{theorem}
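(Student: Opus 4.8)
The plan is to reduce the shrinkage approximation to the two building blocks already established: the single-form CLT of Theorem~\ref{Th_Basic} and the trace approximation of Theorem~\ref{Th_Trace}. Writing $\mu_n = d_n^{-1}\operatorname{tr}(\bfSigma_n)$ and $\wh\mu_n = d_n^{-1}\operatorname{tr}(\wh\bfSigma_n)$, I would first record the exact linear decomposition of (\ref{DefAn}) coming from $\wh\bfSigma_n^s(W) - \bfSigma_{n0}^s(W) = (1-W)(\wh\bfSigma_n - \bfSigma_n) + W(\wh\mu_n - \mu_n)\matid_n$ together with $\vecv_n'\matid_n\vecw_n = \vecv_n'\vecw_n$, namely
\begin{equation}
\label{ShrDecomp}
  \calA_n(W) = (1-W)\,\calD_n(1;\vecv_n,\vecw_n) + W\,(\vecv_n'\vecw_n)\,\calT_n(1),
\end{equation}
where $\calT_n(1) = \sqrt{n}\,(\wh\mu_n - \mu_n) = \sqrt{n}\,(\|\wh\bfSigma_n\|_{tr}^* - \|\bfSigma_n\|_{tr}^*)$ is exactly the scaled trace statistic of Section~\ref{Sec Asymptotics Trace}. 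The whole statement then follows once each summand is replaced by its Gaussian surrogate, uniformly in $W$.

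The key device is to obtain both surrogates from a single Brownian motion, so that their joint (not merely marginal) law is captured. To this end I would apply Theorem~\ref{Th_Many_QFs} to the enlarged family of $L_n = d_n + 1$ pairs: the distinguished pair $(\vecv_n^{(0)}, \vecw_n^{(0)}) = (\vecv_n, \vecw_n)$ together with the $d_n$ unit-vector pairs $(\vecv_n^{(j)}, \vecw_n^{(j)}) = (\vece_j, \vece_j)$, $j = 1, \dots, d_n$. Under the regularity hypotheses these pairs are uniformly $\ell_1$-bounded, so Theorem~\ref{Th_Many_QFs} redefines all processes on one space carrying a Brownian motion $\overline B_n = (\overline B_n(t)_j)_{j=0}^{d_n}$ whose covariances are precisely the $(d_n+1)^{-1}$-scaled parameters displayed in the statement. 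Its coordinate-wise conclusions (ii)--(iii) then deliver, on that space, the approximation of the $0$th form $\calD_n(1;\vecv_n,\vecw_n)$ by the $0$th coordinate of $\overline B_n$, and---by summing coordinates $1,\dots,d_n$ exactly as in the proof of Theorem~\ref{Th_Trace}---the approximation of $\calT_n(1)$ by $d_n^{-1/2}\sum_{j=1}^{d_n}\overline B_n(1)_j$.

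Substituting these two approximations into (\ref{ShrDecomp}) and collecting the $W$-dependence yields $\calB_n(W)$ as in (\ref{DefApproxFunctional}). The crucial point for (\ref{UniformApproxShr}) is that the coordinate errors produced by Theorem~\ref{Th_Many_QFs} are $o(1)$ a.s. and do not depend on $W$; combined with $|1-W|\le 1$, $|W|\le 1$ and $|\vecv_n'\vecw_n|\le 1$ (the vectors being $\ell_2$-normed), the two error contributions are dominated, uniformly over $0 < W \le 1$, by the single-form error plus the trace error. This gives $\sup_{0<W\le1}|\calA_n(W)-\calB_n(W)| = o(1)$ a.s. Since the bound is uniform in $W$, it transfers verbatim to any deterministic or random sequence $W_n \in (0,1]$, which is exactly the final displayed assertion for $\wh\bfSigma_n^s(W_n)$.

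The main obstacle is the bookkeeping of the competing normalizations. The distinguished form sits at the $\sqrt n$ scale with limiting variance $\alpha_n^2(\vecv_n,\vecw_n)$, while $\calT_n(1)$ is a $d_n$-average of $\sqrt n$-scaled diagonal forms with limiting variance $\sigma_{tr,n}^2 = d_n^{-2}\sum_{i,j=1}^{d_n}\beta_n^2(i,j)$; embedding both into the single $L_n = d_n+1$ system rescales every coordinate of $\overline B_n$ by $(d_n+1)^{-1/2}$. The step demanding the most care is therefore to track these factors so that the two surrogates reproduce exactly the coefficients $(1-W)$ and $W\,(\vecv_n'\vecw_n)\,d_n^{-1/2}$ appearing in $\calB_n(W)$, with the trace sum matching up to the factor $\sqrt{(d_n+1)/d_n} = 1 + o(1)$ that must be shown to be absorbed into the negligible error rather than to accumulate. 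Condition (\ref{CrucialCond}), inherited through Theorem~\ref{Th_Trace}, is what makes the underlying strong approximation available in the first place, while (\ref{ConditionShrinkageAsymp}) keeps the limiting trace functional non-degenerate.
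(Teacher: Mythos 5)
Your proposal is correct and follows essentially the same route as the paper's own proof: the identical decomposition $\calA_n(W) = (1-W)\,\calD_n(1;\vecv_n,\vecw_n) + W(\vecv_n'\vecw_n)\,\calT_n(1)$, the identical application of Theorem~\ref{Th_Many_QFs} to the enlarged system of $L_n = d_n+1$ pairs $(\vecv_n,\vecw_n)$ and $(\vece_j,\vece_j)$, $j=1,\dots,d_n$, and the identical uniformity-in-$W$ conclusion via $|1-W|\le 1$, $W\le 1$ and $|\vecv_n'\vecw_n| = O(1)$. The normalization bookkeeping you single out as the delicate step (every coordinate of $\overline{B}_n$ carrying the factor $(d_n+1)^{-1/2}$) is indeed the weak point, but the paper treats it no more carefully than you do: it simply asserts the two coordinate approximations, acknowledges the resulting $O(d_n^{-1})$ variance of the nonparametric part in the discussion following the theorem, and defers the genuine fix to Theorem~\ref{Th_Shrinkage1b}.
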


Notice that
\begin{align*}
\Var( \calB_n(W) ) & = \frac{(1-W)^2}{d_n+1} \alpha_n^2(\vecv_n, \vecw_n) 
+ \frac{W^2(\vecv_n'\vecw_n)^2}{d_n(d_n+1)} \sum_{i,j=1}^{d_n} \beta_n^2(i,j) \\
& \qquad 
+ 2 \frac{(1-W)W \vecv_n'\vecw_n}{\sqrt{d_n(d_n+1)}} \sum_{j=1}^{d_n} \beta_n^2(\vecv_n,\vecw_n, \vece_j, \vece_j) + o(1),
\end{align*}
as $ n \to \infty $.
Hence, under assumption (\ref{RegularProj}), the variance of the approximating Wiener process adressing the nonparametric part of the shrinkage estimator is of the order $ O( d_n^{-1} ) $, whereas the variance of the term approximating the target is of the order $ O(1) $. This is due to the fact that we need a ($d_n+1$)-dimensional Brownian motion (from which $d_n$ coordinates are used to approximate the estimated target). This requires to scale {\em all} coordinates $ (d_n+1)^{-1/2} $, cf. Theorem~\ref{Th_Many_QFs}. 

The following theorem resolves that issue by approximating the shrinkage estimator by two Brownian motions, one in dimension $1$ for the nonparametric part and one in dimension $d_n$ for the target. Those Brownian motions are constructed separately, such that, a priori, nothing can be said about their {\em exact} covariance structure. It turns out, however, that the covariances converge properly. We shall see that for this alternative construction the terms of the resulting variance-covariance decomposition are of the same order.

\begin{theorem}
	\label{Th_Shrinkage1b} Let $ ( \vecv_n, \vecw_n) $ be a regular pair of projections.
	Suppose that the underlying probability space $ (\Omega, \calF, P) $ is rich enough to carry, in addition to the vector time series $ \{ \vecY_{ni} : 1 \le i \le n, n \ge 1 \} $, a uniform random variable $ U_1 $. Then there exist, on  $ (\Omega, \calF, P) $, a univariate Brownian motion $ \{ \overline{B}_n'(t)_0 : t \in [0,1] \} $ with mean zero and
	\[
	  \Cov( \overline{B}_n'(s)_0, \overline{B}_n'(t)_0 ) = \min(s,t) \alpha_n^2( \vecv_n; \vecw_n ),
	\]
	for $ s, t \in [0,1] $, and a mean zero Brownian motion $ \{ ( \overline{B}_n'(t)_j )_{j=1}^{d_n} : t \in [0,1] \} $ in dimension $ d_n $ with covariance function
	\[
	  \Cov( \overline{B}_n'(s)_i, \overline{B}_n'(t)_j ) = \min(s,t) d_n^{-1} \beta_n^2( i, j ),
	\]
	for $ s, t \in [0,1] $ and $ 1 \le i, j \le d_n $, such that
	\[
	  \left|  \sqrt{n} \vecv_n'( \wh{\bfSigma}_n^s( W ) - \bfSigma_{n_0}^s( W ) ) \vecw_n
	  - \calB_n( W )   \right| = o(1),
	\]
	as $ n \to \infty $, a.s., 
	with $ \calB_n'( W ) = (1-W) \overline{B}_n'(1)_0 + W \vecv_n' \vecw_n d_n^{-1/2} \sum_{j=1}^{d_n} \overline{B}_n'(1)_j $. Further,
	\begin{equation}
	\max_{1 \le j \le d_n} \label{ConvergenceCovBBs}
	  | \Cov( \overline{B}_n'(1)_0, \overline{B}_n'(1)_j ) - d_n^{-1/2} \beta_n^2( \vecv_n, \vecw_n, \vece_j, \vece_j ) = o(1),
	\end{equation}
	as $ n \to \infty $. 
\end{theorem}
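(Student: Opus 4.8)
The plan is to split the centred shrinkage bilinear form into a nonparametric part and a target part, to approximate each part separately by a Brownian motion using Theorem~\ref{Th_Basic} and Theorem~\ref{Th_Trace}, to realise both Brownian motions on $ (\Omega, \calF, P) $ with the help of $ U_1 $, and finally to read off the limiting cross--covariance from the fact that both pieces remain attached to the same time series. First I would note that, with $ \wh{\mu}_n = d_n^{-1}\trace(\wh{\bfSigma}_n) $ and $ \mu_n = d_n^{-1}\trace(\bfSigma_n) $,
\begin{equation}
\wh{\bfSigma}_n^s(W) - \bfSigma_{n0}^s(W) = (1-W)( \wh{\bfSigma}_n - \bfSigma_n ) + W( \wh{\mu}_n - \mu_n ) \matid_n ,
\end{equation}
so that, since $ \vecv_n'\matid_n\vecw_n = \vecv_n'\vecw_n $ and $ \sqrt{n}(\wh{\mu}_n - \mu_n) = \calT_n(1) $ by (\ref{ScaledTraceNormProces}),
\begin{equation}
\sqrt{n}\, \vecv_n'( \wh{\bfSigma}_n^s(W) - \bfSigma_{n0}^s(W) ) \vecw_n = (1-W)\, \calD_n(1;\vecv_n,\vecw_n) + W ( \vecv_n'\vecw_n )\, \calT_n(1).
\end{equation}
Both coefficients are bounded uniformly in $ 0 < W \le 1 $, namely $ |1-W| \le 1 $ and, as $ \vecv_n,\vecw_n $ are $ \ell_2 $--normed, $ |W\vecv_n'\vecw_n| \le 1 $.

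Next I would build the two Brownian motions separately. For the nonparametric part, Theorem~\ref{Th_Basic} yields a standard Brownian motion $ W_n(\cdot;\vecv_n,\vecw_n) $ with $ | \calD_n(1;\vecv_n,\vecw_n) - \alpha_n(\vecv_n,\vecw_n) W_n(1;\vecv_n,\vecw_n) | = o(1) $ a.s.; setting $ \overline{B}_n'(t)_0 = \alpha_n(\vecv_n,\vecw_n) W_n(\trunc{nt}/n;\vecv_n,\vecw_n) $ gives the stated variance. For the target part, Theorem~\ref{Th_Trace}, i.e. Theorem~\ref{Th_Many_QFs} applied with $ L_n = d_n $ and the $ d_n $ pairs $ (\vece_j,\vece_j) $, yields a $ d_n $--dimensional Brownian motion $ ( \overline{B}_n'(t)_j )_{j=1}^{d_n} $ with covariances $ d_n^{-1}\beta_n^2(i,j) $ and $ | \calT_n(1) - d_n^{-1/2} \sum_{j=1}^{d_n} \overline{B}_n'(1)_j | = o(1) $ a.s. Each construction adds, on top of the time series, auxiliary randomisation for the underlying Skorokhod embedding; splitting $ U_1 $ into two independent uniform variables and feeding one into each embedding realises both Brownian motions on $ (\Omega, \calF, P) $ while keeping the same vector time series. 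Substituting into the decomposition and using the bounds on the two coefficients gives $ \sup_{0 < W \le 1} | \sqrt{n}\vecv_n'( \wh{\bfSigma}_n^s(W) - \bfSigma_{n0}^s(W) )\vecw_n - \calB_n'(W) | = o(1) $ a.s., which is the asserted approximation.

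It then remains to establish (\ref{ConvergenceCovBBs}), and for this I would upgrade the almost--sure $ o(1) $ approximations to $ L_2 $. The $ (2+\delta) $--moment bounds obtained in the proof of Theorem~\ref{Th_Many_QFs} (condition (I)) render $ \{ \calD_n(1;\vecv_n,\vecw_n)^2 \} $ and $ \{ \calD_n(1;\vece_j,\vece_j)^2 \}_{1\le j\le d_n} $ uniformly integrable, and the Gaussian limits match the corresponding second moments, so
\begin{equation}
\overline{B}_n'(1)_0 = \calD_n(1;\vecv_n,\vecw_n) + \varrho_0, \qquad \overline{B}_n'(1)_j = d_n^{-1/2}\calD_n(1;\vece_j,\vece_j) + \varrho_j,
\end{equation}
with $ \| \varrho_0 \|_{L_2} = o(1) $ and $ \max_{1\le j\le d_n} \| \varrho_j \|_{L_2} = o(1) $. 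Expanding the covariance and bounding the three terms containing a $ \varrho $ by Cauchy--Schwarz (the $ \calD $--factors have bounded $ L_2 $--norm) gives, uniformly in $ j $,
\begin{equation}
\Cov( \overline{B}_n'(1)_0, \overline{B}_n'(1)_j ) = d_n^{-1/2}\Cov( \calD_n(1;\vecv_n,\vecw_n), \calD_n(1;\vece_j,\vece_j) ) + o(1) = d_n^{-1/2}\beta_n^2( \vecv_n, \vecw_n, \vece_j, \vece_j ) + o(1),
\end{equation}
which is (\ref{ConvergenceCovBBs}).

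The hard part is precisely this last step. Since the two Brownian motions arise from two logically independent strong approximations, their joint law — in particular the cross--covariance — is not prescribed a priori, and the independent randomisation of the two embeddings might seem to force a vanishing cross--covariance. The resolution is that both coordinates remain tethered to the same time series through errors that are not only $ o(1) $ almost surely but $ o(1) $ in $ L_2 $; hence their covariance is forced to agree, to leading order, with that of the underlying bilinear forms $ \calD_n(1;\vecv_n,\vecw_n) $ and $ d_n^{-1/2}\calD_n(1;\vece_j,\vece_j) $, irrespective of the auxiliary noise. The one genuinely quantitative point is to make the passage from the $ (2+\delta) $--moment bound to the $ L_2 $--error bound \emph{uniform} over all $ d_n $ coordinates, which is exactly what allows the maximum over $ j $ in (\ref{ConvergenceCovBBs}).
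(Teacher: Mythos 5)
Your proposal is correct and follows essentially the same route as the paper's own proof: the same split into a nonparametric part and a target part, the two Brownian motions constructed separately from Theorem~\ref{Th_Basic} and Theorem~\ref{Th_Trace} and then realised on the original space $(\Omega, \calF, P)$ by auxiliary randomisation through $U_1$ (the paper formalises exactly this step with Billingsley's lemma, applied to the whole sequence of processes in $(D[0,1])^\infty$ so that the almost-sure statements transfer), and the cross-covariance (\ref{ConvergenceCovBBs}) recovered from the fact that both Brownian motions remain $L_2$-tethered to functionals of the same underlying time series. The only minor difference is that the paper anchors this $L_2$-comparison to the martingale approximations $M_{n,0}^{(n)}(j)$ via Lemma~\ref{HLemmaCov} and Lemma~\ref{Lemma_Martingale_Approx}, whereas you anchor it directly to the bilinear forms $\calD_n$ through a uniform-integrability upgrade of the almost-sure approximations --- an equally workable variant that yields the same covariance expansion.
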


Observe that 
\begin{align*}
\Var( \calB_n'(W) ) & = (1-W)^2 \alpha_n^2(\vecv_n, \vecw_n) 
+ \frac{W^2(\vecv_n'\vecw_n)^2}{d_n^2} \sum_{i,j=1}^{d_n} \beta_n^2(i,j) \\
& \qquad 
+ 2 \frac{(1-W)W \vecv_n'\vecw_n}{d_n} \sum_{j=1}^{d_n} \beta_n^2(\vecv_n,\vecw_n, \vece_j, \vece_j) + o(1),
\end{align*}
as $ n \to \infty $, a.s., where all three terms are $ O(1) $.

The above result shows that the nonparametric part, namely the sample covariance matrix  $ \wh{\bfSigma}_n $, as well as the shrinkage target $ \wh{\mu}_n \matid_n $ contribute
to the asymptotics. In this sense, shrinking with respect to the chosen scaled norms 
provides us with a large sample approximation that mimics the finite sample situation.

\subsection{Comparisons with oracle estimators}

Recall that an oracle estimator is an estimator that depends on quantities unknown to us such as the optimal shrinkage weight $ W_n^* $. Of course, it is of interest to study the distance between the shrinkage estimator $ \wh{\bfSigma}_n^s( \wh{W}_n^* ) $ with estimated optimal weight and the associated oracle using $ W_n^* $. In particular, the question arises
how the rate of convergence (\ref{RvSRate}) affects the difference between the fully data adaptive estimator and an oracle.

The next theorem compares the shrinkage estimator 
$ \wh{\bfSigma}_n^s = \wh{\bfSigma}_n^s( \wh{W}_n^* ) $ that uses the estimated optimal shrinkage weight $ \wh{W}_n^* $ and the {\em oracle estimator} 
\[ 
  \wh{\bfSigma}_n^s( W_n^* )  = (1-W_n^*) \wh{\bfSigma}_n + W_n^* \wh{\mu}_n \matid_n,
\]
which shrinks the sample covariance matrix towards the target using the optimal shrinkage weight $ W_n^* $, in terms of the pseudometric $ \Delta_n( \cdot, \cdot; \vecv_n, \vecw_n ) $ and thus considers the quantity
\[
  \Delta_n( \wh{\bfSigma}_n^s( \wh{W}_n^* ), \wh{\bfSigma}_{n}^s( W_n^* ); \vecv_n, \vecw_n ) = 
  \bigl| 
   \vecv_n'( \wh{\bfSigma}_n^s( \wh{W}_n^* ) - \wh{\bfSigma}_n^s( W_n^* ) )\vecw_n
  \bigr|.
\]
The following result shows that even now the rate of convergence is equal to the
rate of convergence of the estimator $ \wh{W}_n^* $.

\begin{theorem}
	\label{Th_Shrinkage_Comparision}
  Under the assumptions of Theorem~\ref{Th_Shrinkage1} and the construction described there we have, on the new probability space, 
  \begin{align*}
   &  
\Delta_n( \wh{\bfSigma}_n^s( \wh{W}_n^* ), \wh{\bfSigma}_{n}^s( W_n^* ); \vecv_n, \vecw_n ) \\
& \   = 
    |\wh{W}_n^* - W_n^*| \biggl( 
      O(1) + O( \overline{B}_n(1)_0 n^{-1/2} )
        + O\left( n^{-1/2} d_n^{-1/2} \sum_{i=1}^{d_n} \overline{B}_n(1)_i \right)
        + o( n^{-1/2} ),
    \biggr)
  \end{align*}
  as $ n \to \infty $, a.s..
\end{theorem}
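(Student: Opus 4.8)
The plan is to exploit the fact that the two estimators being compared share the \emph{same} unshrunken sample covariance matrix, so that their difference telescopes cleanly. Writing out the two convex combinations, I would first observe the exact identity
\[
  \wh{\bfSigma}_n^s( \wh{W}_n^* ) - \wh{\bfSigma}_n^s( W_n^* )
  = (\wh{W}_n^* - W_n^*)\bigl( \wh{\mu}_n \matid_n - \wh{\bfSigma}_n \bigr),
\]
since the coefficients of $\wh{\bfSigma}_n$ and of $\wh{\mu}_n \matid_n$ each differ only by $\pm(\wh{W}_n^* - W_n^*)$. Applying $\vecv_n'(\cdot)\vecw_n$ and taking absolute values then yields the factorization
\[
  \Delta_n( \wh{\bfSigma}_n^s( \wh{W}_n^* ), \wh{\bfSigma}_{n}^s( W_n^* ); \vecv_n, \vecw_n )
  = |\wh{W}_n^* - W_n^*| \cdot \bigl| \vecv_n'( \wh{\mu}_n \matid_n - \wh{\bfSigma}_n ) \vecw_n \bigr|,
\]
so the whole theorem reduces to pinning down the large-sample order of the scalar $\vecv_n'( \wh{\mu}_n \matid_n - \wh{\bfSigma}_n ) \vecw_n = \wh{\mu}_n ( \vecv_n' \vecw_n ) - \vecv_n' \wh{\bfSigma}_n \vecw_n$, which is then merely multiplied by the weight difference.

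Next I would decompose this scalar into a deterministic population part plus stochastic fluctuations, using the approximations carried by the construction of Theorem~\ref{Th_Shrinkage1}. Since $\sup_{0 < W \le 1} | \calA_n(W) - \calB_n(W) | = o(1)$ a.s.\ and both $\calA_n$ and $\calB_n$ are affine in $W$, the supremum of their difference is attained at an endpoint, so the coefficients of $(1-W)$ and of $W$ must agree up to $o(1)$ separately. This gives, on the new probability space, the two componentwise approximations
\[
  \sqrt{n}\, \vecv_n'( \wh{\bfSigma}_n - \bfSigma_n ) \vecw_n = \overline{B}_n(1)_0 + o(1),
  \qquad
  \sqrt{n}( \wh{\mu}_n - \mu_n ) = d_n^{-1/2} \sum_{j=1}^{d_n} \overline{B}_n(1)_j + o(1),
\]
a.s., where $\mu_n = d_n^{-1} \trace( \bfSigma_n )$ and the second approximation is precisely the trace asymptotics of Theorem~\ref{Th_Trace}. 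Substituting both into $\wh{\mu}_n ( \vecv_n' \vecw_n ) - \vecv_n' \wh{\bfSigma}_n \vecw_n$ and collecting terms, I would arrive at
\[
  \vecv_n'( \wh{\mu}_n \matid_n - \wh{\bfSigma}_n ) \vecw_n
  = \vecv_n'( \mu_n \matid_n - \bfSigma_n ) \vecw_n
   + ( \vecv_n' \vecw_n )\, n^{-1/2} d_n^{-1/2} \sum_{j=1}^{d_n} \overline{B}_n(1)_j
   - n^{-1/2} \overline{B}_n(1)_0 + o( n^{-1/2} ).
\]

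Finally I would read off the stated orders: the leading term $\vecv_n'( \mu_n \matid_n - \bfSigma_n ) \vecw_n$ is deterministic and $O(1)$; because $(\vecv_n, \vecw_n)$ is regular, $|\vecv_n'\vecw_n|$ is bounded by~(\ref{RegularProj}), so the two remaining stochastic terms contribute $O( n^{-1/2} d_n^{-1/2} \sum_{i=1}^{d_n} \overline{B}_n(1)_i )$ and $O( n^{-1/2} \overline{B}_n(1)_0 )$ respectively, plus the $o(n^{-1/2})$ error, and multiplying through by $|\wh{W}_n^* - W_n^*|$ delivers the claimed expansion. I do not expect a serious obstacle here, as the result is essentially a corollary of Theorem~\ref{Th_Shrinkage1}; the only point demanding care is the bookkeeping—verifying that the bilinear-form and trace approximations live on a \emph{common} probability space (they do, via the shared construction) and that the affine-in-$W$ argument legitimately separates the two contributions. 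The regularity hypothesis enters solely to keep $\vecv_n'\vecw_n$ bounded so that the inner-product factor may be absorbed into the order symbols.
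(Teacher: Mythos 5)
Your proposal is correct and takes essentially the same route as the paper: both factor the difference as $(\wh{W}_n^* - W_n^*)(\wh{\mu}_n \matid_n - \wh{\bfSigma}_n)$ (the paper writes this same decomposition out term by term rather than as a single telescoping identity), bound the deterministic part $\vecv_n'(\mu_n\matid_n - \bfSigma_n)\vecw_n$ by $O(1)$, and insert the bilinear-form and trace approximations carried by the common construction of Theorem~\ref{Th_Shrinkage1}. One small correction: the bound $|\vecv_n'\vecw_n| = O(1)$ needed to absorb the inner product into the order symbols comes from the uniform $\ell_1$-bounds via $|\vecv_n'\vecw_n| \le \|\vecv_n\|_{\ell_1}\|\vecw_n\|_{\ell_1}$, not from the regularity condition (\ref{RegularProj}), which only supplies a lower bound keeping the pair away from orthogonality.
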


The next result investigates the difference between the shrinkage estimator and the oracle type estimator
\[
  \bfSigma_{n0}^s( W_n^* ) = (1-W_n^*) \bfSigma_n + W_n^* \text{tr}( \bfSigma_n ) d_n^{-1} \matid_n
\]
using the oracle shrinkage weight and assuming knowledge of  $\bfSigma_n $, in terms of the pseudo-distance
\[
  \Delta_n( \wh{\bfSigma}_n^s( \wh{W}_n^* ), \bfSigma_{n0}^s( W_n^* ); \vecv_n, \vecw_n )
  = \bigl| \vecv_n'[ \wh{\bfSigma}_n^s( \wh{W}_n^* ) - \bfSigma_{n0}^s( W_n^* ) ] \vecw_n 
  \bigr|.
\]

\begin{theorem}
\label{Shrinkage_Oracle1} 
  Under the assumptions of Theorem~\ref{Th_Shrinkage1} and the construction described there we have, on the new probability space, 
  \[
    \Delta_n( \wh{\bfSigma}_n^s( \wh{W}_n^* ), \bfSigma_{n0}^s( W_n^* ); \vecv_n, \vecw_n ) 
    = n^{-1/2} \calB_n( \wh{W}_n^* ) + (W_n^* - \wh{W}_n^*) O(1) 
    + o(n^{-1/2} ),
  \]
  as $ n \to \infty $, a.s..
\end{theorem}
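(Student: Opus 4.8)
The plan is to reduce Theorem~\ref{Shrinkage_Oracle1} to Theorem~\ref{Th_Shrinkage1} by an exact algebraic decomposition that separates the stochastic fluctuation of the shrinkage estimator evaluated at the \emph{estimated} weight from the purely deterministic discrepancy caused by replacing $\wh{W}_n^*$ by $W_n^*$ in the oracle target. Since Theorem~\ref{Th_Shrinkage1} is proved uniformly in $0 < W \le 1$ and is explicitly asserted for arbitrary (deterministic or random) weight sequences, I may substitute $W_n = \wh{W}_n^*$ directly, with no further work on the probability-space construction.

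First I would split
\[
\wh{\bfSigma}_n^s(\wh{W}_n^*) - \bfSigma_{n0}^s(W_n^*)
= \bigl[\wh{\bfSigma}_n^s(\wh{W}_n^*) - \bfSigma_{n0}^s(\wh{W}_n^*)\bigr]
+ \bigl[\bfSigma_{n0}^s(\wh{W}_n^*) - \bfSigma_{n0}^s(W_n^*)\bigr]
\]
and evaluate the bilinear form $\vecv_n'(\cdot)\vecw_n$ on each bracket. For the first bracket, the definition (\ref{DefAn}) of $\calA_n$ gives $\vecv_n'[\wh{\bfSigma}_n^s(\wh{W}_n^*) - \bfSigma_{n0}^s(\wh{W}_n^*)]\vecw_n = n^{-1/2}\calA_n(\wh{W}_n^*)$, and Theorem~\ref{Th_Shrinkage1}, applied with the random sequence $W_n = \wh{W}_n^*$, yields $\calA_n(\wh{W}_n^*) = \calB_n(\wh{W}_n^*) + o(1)$ a.s.\ with $\calB_n$ as in (\ref{DefApproxFunctional}). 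Multiplying by $n^{-1/2}$ produces the leading term $n^{-1/2}\calB_n(\wh{W}_n^*)$ together with an $o(n^{-1/2})$ remainder.

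For the second bracket I would use the linearity of $W \mapsto \bfSigma_{n0}^s(W) = (1-W)\bfSigma_n + W\,\trace(\bfSigma_n)d_n^{-1}\matid_n$, giving
\[
\bfSigma_{n0}^s(\wh{W}_n^*) - \bfSigma_{n0}^s(W_n^*)
= (\wh{W}_n^* - W_n^*)\bigl[\trace(\bfSigma_n)d_n^{-1}\matid_n - \bfSigma_n\bigr],
\]
so its contribution equals $(\wh{W}_n^* - W_n^*)\,\vecv_n'[\trace(\bfSigma_n)d_n^{-1}\matid_n - \bfSigma_n]\vecw_n$. The step needing care, and the main (though modest) obstacle, is the uniform bound $\vecv_n'[\trace(\bfSigma_n)d_n^{-1}\matid_n - \bfSigma_n]\vecw_n = O(1)$: here $\vecv_n'\matid_n\vecw_n = \vecv_n'\vecw_n$ is bounded by Cauchy--Schwarz for the $\ell_2$-normed vectors, $\trace(\bfSigma_n)d_n^{-1} = \mu_n$ is bounded because Assumption~(A) makes each coordinate variance $\sum_j |c_{nj}^{(\nu)}|^2 E(\epsilon^2)$ uniformly summable, and $\vecv_n'\bfSigma_n\vecw_n$ is the covariance of the two projected linear processes, which is controlled by the same $\ell_1$-weighted coefficient estimates used throughout the paper. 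Absorbing this bounded scalar into $O(1)$ turns the second bracket into $(W_n^* - \wh{W}_n^*)\,O(1)$.

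Finally I would recombine the two brackets and take the modulus $\Delta_n(\cdot,\cdot;\vecv_n,\vecw_n)=|\vecv_n'(\cdot)\vecw_n|$ to obtain the stated expansion
\[
\Delta_n(\wh{\bfSigma}_n^s(\wh{W}_n^*),\bfSigma_{n0}^s(W_n^*);\vecv_n,\vecw_n)
= n^{-1/2}\calB_n(\wh{W}_n^*) + (W_n^* - \wh{W}_n^*)\,O(1) + o(n^{-1/2}).
\]
Note that the rate (\ref{RvSRate}) of $\wh{W}_n^*$ is not needed here, since the factor $(W_n^* - \wh{W}_n^*)$ is deliberately left explicit in the conclusion; everything else is the direct algebra of the shrinkage definitions together with the already-established uniform approximation of Theorem~\ref{Th_Shrinkage1}.
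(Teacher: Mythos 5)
Your proposal is correct and follows essentially the same route as the paper: the identical two-bracket decomposition through $\bfSigma_{n0}^s(\wh{W}_n^*)$, the application of Theorem~\ref{Th_Shrinkage1} at the random weight $W_n = \wh{W}_n^*$ to get $n^{-1/2}\calB_n(\wh{W}_n^*) + o(n^{-1/2})$, and the linearity-plus-boundedness argument showing the oracle discrepancy is $(\wh{W}_n^* - W_n^*)\,O(1)$ --- which the paper packages as the term $R_n(\wh{W}_n^*, W_n^*)$ already bounded in the proof of Theorem~\ref{Th_Shrinkage_Comparision} via $|\vecv_n'\bfSigma_n\vecw_n| \le \|\vecv_n\|_{\ell_1}\|\vecw_n\|_{\ell_1} = O(1)$ and $\operatorname{tr}(\bfSigma_n)d_n^{-1} = O(1)$.
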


The above result is remarkable in that it shows that it is optimal in the sense that $  \Delta_n( \wh{\bfSigma}_n^s( \wh{W}_n^* ), \bfSigma_{n0}^s( W_n^* ); \vecv_n, \vecw_n ) $ inherits the rate of convergence from the estimator $ \wh{W}_n^* $ of the optimal shrinkage weight $ W_n^* $, cf. (\ref{RvSRate}).


\subsection{Nearly orthogonal projections}

Let $ \vecv_{n}^{(i)} $, $ i = 1, \dots, L_n $, be unit vectors in $ \R^{d_n} $ on which we may  project $ \vecY_n $, e.g. in order to determine the best approximating direction. Recall that the true covariance between two projections $ \vecv_{n}^{(i)}{}' \vecY_n $ and $ \vecv_{n}^{(j)}{}' \vecY_n $ is  
\[ 
  \Cov( \vecv_{n}^{(i)}{}' \vecY_n, \vecv_{n}^{(j)}{}' \vecY_n) = \vecv_{n}^{(i)}{}' \bfSigma_n \vecv_{n}^{(j)},
\]
and the corresponding shrinkage estimator is
\[
  \wh{\Cov}(  \vecv_{n}^{(i)}{}' \vecY_n, \vecv_{n}^{(j)}{}' \vecY_n ) 
  =  \vecv_{n}^{(i)}{}' \wh{\bfSigma}_n^s( \wh{W}_n^* ) \vecv_{n}^{(j)}.
\]
Clearly, those covariances vanish for $ i \not= j $, if the $ \vecv_n^{(i)} $ are chosen as eigenvectors of $  \wh{\bfSigma}_n^s( \wh{W}_n^* ) $, as in a classical principal component analysis (PCA) applied to the shrinkage covariance matrix estimator. But when analyzing high--dimensional data it is common to rely on procedures such as sparse PCA, see \cite{WittenTibshirani2009}, which yield sparse principal components. Then
analyzing the covariances of the projections $ \vecv_{n}^{(i)}{}' \vecY_n $ is of interest. 

If $ \calO_{L_n} = \{ \vecv_{n}^{(j)} : 1 \le j \le L_n\} $ is an orthogonal system and $ L_n < d_n $, then $ \calO_{L_n} $ spans a $L_n$--dimensional subspace of $ \R^{d_n} $. Of course, there are at most $L_n$ orthogonal vectors, i.e. $L_n$ cannot be larger than $ d_n $. However, if one relaxes the orthogonality condition 
\[ 
  \vecv_{n}^{(i)}{}' \vecv_{n}^{(j)} = 0, \qquad  i \not= j,
\]
then one can place much more unit vectors in the Euclidean space $ \R^{d_n} $ in such a way that their pairwise angles are small. Indeed,
\cite{Tao2013} provides an elegant proof of the following Kabatjanskii-Levenstein bound.

\begin{theorem}
	\label{CheapKLBound}
  {\sc (Cheap version of the Kabatjanskii-Levenstein bound, Tao 2013)}.\\ 
  Let $ \vecx_1, \dots, \vecx_m $ be unit vectors in $ \R^{d_n} $ such that
  $ | \vecx_i' \vecx_j | \le A d_n^{-1/2} $ for some $ 1/2 \le A \le \frac12 \sqrt{d_n} $.
  Then we have $ m \le \bigl( \frac{C d_n}{A^2} \bigr)^{C A^2} $ for some universal
  constant $C$. 
\end{theorem}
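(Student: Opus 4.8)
The plan is to use the \emph{tensor power trick}, which converts the near-orthogonality hypothesis into genuine near-orthogonality in a higher-dimensional space where a dimension count becomes effective. Fix an integer $k \ge 1$ to be chosen later and pass to the symmetric tensor powers $\vecx_i^{\otimes k}$, which live in $\operatorname{Sym}^k(\R^{d_n})$, a space of dimension $D_k = \binom{d_n+k-1}{k}$. The crucial elementary identity is $\langle \vecx_i^{\otimes k}, \vecx_j^{\otimes k}\rangle = (\vecx_i'\vecx_j)^k$, so the associated $m\times m$ Gram matrix $G = \bigl( (\vecx_i'\vecx_j)^k \bigr)_{1\le i,j\le m}$ is symmetric positive semidefinite, has all diagonal entries equal to $1$ (the $\vecx_i$ being unit vectors), off-diagonal entries bounded by $|G_{ij}| \le (A d_n^{-1/2})^k = (A^2/d_n)^{k/2}$, and rank at most $D_k$.

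The engine of the proof is the rank--trace inequality, obtained from Cauchy--Schwarz applied to the nonzero eigenvalues of $G$: since $G$ is positive semidefinite of rank at most $D_k$,
\[
  m^2 = (\trace G)^2 \le \operatorname{rank}(G)\,\trace(G^2) \le D_k\,\trace(G^2).
\]
I would then compute, separating diagonal from off-diagonal contributions, $\trace(G^2) = \sum_{i,j} G_{ij}^2 \le m + m(m-1)(A^2/d_n)^k \le m + m^2 (A^2/d_n)^k$. Substituting yields
\[
  m^2 \le D_k\bigl( m + m^2 (A^2/d_n)^k \bigr),
\]
and hence $m\bigl(1 - D_k (A^2/d_n)^k\bigr) \le D_k$.

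It remains to choose $k$ so that the bracketed factor is bounded below. I would set $k = \lceil 3e A^2\rceil$, which is legitimate since $A \ge 1/2$ forces $k\ge 1$. Using $D_k \le \bigl( e(d_n+k-1)/k\bigr)^k$ together with the hypothesis $A^2 \le d_n/4$, one checks that $D_k (A^2/d_n)^k \le (1/2)^k \le 1/2$ in the principal regime $A^2 \lesssim d_n/(6e)$, so the bracket exceeds $1/2$ and $m \le 2 D_k$. Estimating $D_k \le (d_n/(2A^2))^k$ and absorbing the prefactor and the ceiling into the constant (legitimate because $d_n/A^2 \ge 4 > 1$), the right-hand side collapses to the claimed form $(C d_n/A^2)^{C A^2}$ for a universal $C$.

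The main obstacle is the tension in the choice of $k$: it must be large enough that $D_k(A^2/d_n)^k \le 1/2$ (so the Gram matrix is diagonally dominant enough to be nearly of full rank), yet small enough that $D_k$ — the dimension count that ultimately bounds $m$ — does not overshoot the target exponent $CA^2$. The two requirements are compatible precisely because $A^2 \le d_n/4$, and the balance is struck at $k \asymp A^2$. The only remaining point is the boundary regime where $A$ is comparable to $\sqrt{d_n}$; there $d_n/A^2$ is a bounded constant, the target bound $(Cd_n/A^2)^{CA^2}$ is already exponential in $d_n$, and the estimate follows either from taking $k$ of order $d_n$ (so that $D_k \asymp 4^{d_n}$) or from an elementary sphere-packing bound on unit vectors with pairwise inner product at most $\tfrac12$.
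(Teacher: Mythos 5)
The paper contains no proof of Theorem~\ref{CheapKLBound}: it is imported verbatim from \cite{Tao2013} and serves only to motivate the study of nearly orthogonal projections, so there is no internal argument to compare yours against. Your proof is correct --- the chain $m^2=(\trace G)^2\le \operatorname{rank}(G)\,\trace(G^2)\le \binom{d_n+k-1}{k}\bigl(m+m^2(A^2/d_n)^k\bigr)$ with $k\asymp A^2$ checks out, and your split into the regime $A^2\lesssim d_n/(6e)$ (where the symmetric-tensor dimension bound makes the bracket $\ge 1/2$) and the boundary regime $A^2\gtrsim d_n/(6e)$ (where pairwise inner products are at most $1/2$ and a sphere-packing count gives $m\le 3^{d_n}$, absorbed into $(Cd_n/A^2)^{CA^2}$) is sound. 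Note, however, that this is not a new route: it is essentially Tao's own argument, the tensor power trick applied to $\operatorname{Sym}^k(\R^{d_n})$ combined with the rank--trace Cauchy--Schwarz inequality for the Gram matrix, with your only departure being that you fuse the dimension lemma and the tensoring step into a single computation.
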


Theorem~\ref{CheapKLBound} motivates to study the case of  {\em nearly orthogonal} weighting vectors, defined as a pair $ (\vecv_n, \vecw_n) \in \mathcal{W} \times \mathcal{W} $ satisfying 
\begin{equation}
\label{AlmostOrth}
  \vecv_n' \vecw_n = o(1), \qquad n \to \infty,
\end{equation}
Now the asymptotics of the shrinkage estimator is as follows:

\begin{theorem}
\label{Th_Shrinkage_Orth}
 Let $ \vecv_n $ and $ \vecw_n $ be unit vectors satisfying the
nearly orthogonal condition (\ref{AlmostOrth}) and
suppose that the conditions of Theorem~\ref{Th_Shrinkage1} hold. Then
\[
  | \sqrt{n} \vecv_n'( \wh{\Sigma}_n^s( W ) - \wh{\bfSigma}_{n0}^s( W ) ) \vecw_n 
  - \calB_n( W ) | = o_P(1),
\]
as $ n \to \infty $, a.s., where
\[
  \calB_n( W ) = (1-W) \overline{B}_n(1)_0 + W \vecv_n'\vecw_n d_n^{-1/2} \sum_{i=1}^{d_n}  \overline{B}_n(1)_i.
\]
\end{theorem}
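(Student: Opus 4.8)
The plan is to start from the exact algebraic decomposition of the centered shrinkage bilinear form and then use the near--orthogonality condition (\ref{AlmostOrth}) to discard the target contribution. Writing $\mu_n = d_n^{-1}\trace(\bfSigma_n)$, $\wh{\mu}_n = d_n^{-1}\trace(\wh{\bfSigma}_n)$ and using $\vecv_n'\matid_n\vecw_n = \vecv_n'\vecw_n$, one obtains
\[
  \calA_n(W) := \sqrt{n}\,\vecv_n'(\wh{\bfSigma}_n^s(W)-\bfSigma_{n0}^s(W))\vecw_n
  = (1-W)\,\calD_n(1;\vecv_n,\vecw_n) + W\,\vecv_n'\vecw_n\,\calT_n(1),
\]
where $\calD_n(1;\vecv_n,\vecw_n) = \sqrt{n}\,\vecv_n'(\wh{\bfSigma}_n-\bfSigma_n)\vecw_n$ is the single centered bilinear form and $\calT_n(1) = \sqrt{n}(\wh{\mu}_n-\mu_n) = \sqrt{n}(\|\wh{\bfSigma}_n\|_{tr}^* - \|\bfSigma_n\|_{tr}^*)$ is the scaled trace statistic at $t=1$. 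The target functional $\calB_n(W)$ has exactly the same affine--in--$W$ structure, with $\calD_n(1;\vecv_n,\vecw_n)$ replaced by $\overline{B}_n(1)_0$ and $\calT_n(1)$ by $d_n^{-1/2}\sum_{j=1}^{d_n}\overline{B}_n(1)_j$.

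Working on the probability space furnished by the construction of Theorem~\ref{Th_Shrinkage1}, I would treat the two ingredients separately. For the nonparametric part, Theorem~\ref{Th_Basic} supplies a standard Brownian motion with $|\calD_n(1;\vecv_n,\vecw_n) - \overline{B}_n(1)_0| = o(1)$ a.s. For the trace part, Theorem~\ref{Th_Trace} gives $|\calT_n(1) - d_n^{-1/2}\sum_{j=1}^{d_n}\overline{B}_n(1)_j| = o(1)$ a.s. In addition I would record the crude order bounds $\calT_n(1) = O_P(1)$ and $d_n^{-1/2}\sum_{j=1}^{d_n}\overline{B}_n(1)_j = O_P(1)$: both are, up to an $o(1)$ a.s.\ term, centered Gaussian with variance $\sigma_{tr,n}^2 = d_n^{-2}\sum_{i,j=1}^{d_n}\beta_n^2(i,j)$, which is bounded below by (\ref{ConditionShrinkageAsymp}) and bounded above because Assumption (A) forces the long--run variances $\beta_n^2(i,j)$ to be uniformly bounded.

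The crux is the near--orthogonality condition (\ref{AlmostOrth}). Since $\vecv_n'\vecw_n = o(1)$ while $0<W\le1$, both target contributions vanish uniformly in $W$,
\[
  \sup_{0<W\le 1}|W\,\vecv_n'\vecw_n\,\calT_n(1)| \le |\vecv_n'\vecw_n|\,|\calT_n(1)| = o_P(1),
\]
and identically for the $\calB_n$ target term $W\,\vecv_n'\vecw_n\,d_n^{-1/2}\sum_{j=1}^{d_n}\overline{B}_n(1)_j$. Combining these with the two approximation statements yields
\[
  \sup_{0<W\le 1}|\calA_n(W)-\calB_n(W)| \le \sup_{0<W\le 1}(1-W)\,\bigl|\calD_n(1;\vecv_n,\vecw_n)-\overline{B}_n(1)_0\bigr| + o_P(1) = o_P(1),
\]
where the leading term is $o(1)$ a.s.\ by Theorem~\ref{Th_Basic} together with the bounded factor $(1-W)\in[0,1)$. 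Uniformity in $W$ is automatic because both $\calA_n$ and $\calB_n$ are affine in $W$ with bounded random coefficients.

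The main obstacle is conceptual rather than computational. In the regular case the nonparametric and target parts are of the same stochastic order, which forces the delicate joint coupling of a one--dimensional nonparametric Brownian motion and a $d_n$--dimensional trace Brownian motion (precisely the difficulty resolved by Theorem~\ref{Th_Shrinkage1b}). Here near--orthogonality collapses the target part to $o_P(1)$, so the two approximations decouple and only the single--form result of Theorem~\ref{Th_Basic} is needed for the surviving nonparametric term. The one genuine technical check is therefore the uniform upper bound on $\sigma_{tr,n}^2$ guaranteeing $\calT_n(1)=O_P(1)$; without it, multiplying by the vanishing factor $\vecv_n'\vecw_n$ would not produce an $o_P(1)$ remainder, and the argument would break down.
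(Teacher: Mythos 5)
Your proposal is correct and starts from the same affine--in--$W$ decomposition as the paper, invoking Theorem~\ref{Th_Basic} for the nonparametric part and Theorem~\ref{Th_Trace} for the trace part; the only real difference is how the target contribution is disposed of. The paper never separates the trace statistic from its Brownian approximation: since both carry the common factor $\vecv_n'\vecw_n$, the target part of $|\calA_n(W)-\calB_n(W)|$ factors as $W\,|\vecv_n'\vecw_n|$ times the trace--approximation error, which is $o(1)$ a.s.\ by Theorem~\ref{Th_Trace}; mere boundedness of $\vecv_n'\vecw_n$ already suffices there, so the approximation holds a.s.\ and the near--orthogonality condition (\ref{AlmostOrth}) enters essentially only in the interpretation, namely that the target term $W\vecv_n'\vecw_n d_n^{-1/2}\sum_i \overline{B}_n(1)_i$ inside the limit functional is itself $o_P(1)$, which is the point of the theorem. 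You instead discard the two target contributions separately, multiplying the vanishing inner product by $\calT_n(1)=O_P(1)$ and by the Gaussian term $d_n^{-1/2}\sum_j \overline{B}_n(1)_j=O_P(1)$. That route is valid, but it buys you only $o_P(1)$ where the paper's factorization delivers a.s.\ $o(1)$, and it genuinely requires the tightness check you flag: a uniform upper bound on $\sigma_{tr,n}^2$, which does hold because Assumption (A) together with the $\ell_1$-boundedness of the unit vectors $\vece_i$ makes the long--run parameters $\beta_n^2(i,j)$ uniformly bounded. The paper's argument avoids that check entirely, which is the small price your otherwise sound decoupling argument pays.
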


Observe that for asymptotically orthogonal weighting vectors the term  $ W \vecv_n'\vecw_n d_n^{-1/2} \sum_{i=1}^{d_n}  \overline{B}_n(1)_i $ corresponding to the (parametric) shrinkage target is $ o_P(1) $ and thus vanishes asymptotically. In this situation, the nonparametric part dominates in large samples.

\subsection{Proofs}

\begin{proof}[Proof of Theorem~\ref{Th_Shrinkage1}]
	First notice that (\ref{RegularProj}) ensures that the second term in (\ref{DefApproxFunctional}) does not converge to $0$ in probability, since $ (\vecv_n, \vecw_n) $ is a regular projection and condition (\ref{ConditionShrinkageAsymp}) ensures that 
	the Gaussian random variable $ V_n = d_n^{-1/2} \sum_{j=1}^{d_n} \overline{B}_n(1)_j $ is not $ o_P(1) $, since
	$$ \inf_{n \ge 1} P( | V_n | > \delta ) \ge \inf_{n \ge 1} P( | V_n/\sigma_{tr,n} | > \delta / \underline{\sigma} ) > 0 $$
	for any $ \delta > 0 $. Hence $ W \vecv_n' \vecw_n V_n = o_P(1) $ iff.  $ \vecv_n' \vecw_n = o(1) $, which is excluded by (\ref{RegularProj}).
	
	We argue similarly as in the proof of Theorem~\ref{Th_Trace}: Put $ \calD_n = ( \calD_{nj} )_{j=0}^{d_n} $ with $ \calD_{n0}(t) = L_n^{-1/2} \calD_n( t; \vecv_n, \vecw_n ) $ and
	$ \calD_{nj}(t) = L_n^{-1/2} \calD_n(t, \vece_j, \vece_j ) $, $ j = 1, \dots, d_n $, where $ L_n = d_n+1 $. Since the weighting vectors are uniformly $ \ell_1 $-bounded, Theorem~\ref{Th_Many_QFs} yields, on a new probability space where a process equivalent to $ \calD_n $ can be defined and will be denoted again by $ \calD_n $, the existence of a Brownian motion $ \{ ( \overline{B}_n(t) )_{j=0}^{d_n} : t \ge 0 \} $ as characterized in Theorem~\ref{Th_Shrinkage1}, such that
	\[
	| \sqrt{n} ( \vecv_n' \wh{\bfSigma}_n \vecw_n - \vecv_n' \bfSigma_n \vecw_n )
	- \overline{B}_n( 1 )_0 | = o(1),
	\]
	and 
	\[
	\left| \sqrt{n} ( \| \wh{\bfSigma}_n \|_{tr}^*   - 
	\| \bfSigma_n \|_{tr}^* ) 
	- d_n^{-1/2} \sum_{i=1}^{d_n}  \overline{B}_n( 1 )_i \right|
	= o(1),
	\]
	as $ n \to \infty $, a.s. Using these results, $ \wh{\mu}_n = \| \wh{\bfSigma}_n \|_{tr} d_n^{-1} $ and the fact that  $ | \vecv_n' \vecw_n | \le \| \vecv_n \|_{\ell_1} \| \vecw_n \|_{\ell_1} = O(1) $, 
	we have for any $ 0 < W \le 1 $
	\begin{align*}
	& | \calA_n(W) - \calB_n(W) | \\
	& \quad = \biggl|
	\sqrt{n} \vecv_n' (1-W)( \wh{\bfSigma}_n - \bfSigma_n) \vecw_n
	- (1-W) \overline{B}_n( 1 )_0  \\
	& \qquad 
	+ W \sqrt{n} \vecv_n'( \| \wh{\bfSigma}_n \|_{tr}^* - \| \bfSigma_n \|_{tr}^* ) \vecw_n
	- W\vecv_n'\vecw_n d_n^{-1/2} \sum_{i=1}^{d_n} \overline{B}_n( 1 )_i          
	\biggr| \\
	& \quad \le (1-W) | \sqrt{n} \vecv_n'(\wh{\bfSigma}_n - \bfSigma_n) \vecw_n
	- \overline{B}_n( 1 )_0 | \\
	&  \qquad + 
	W \biggl|\sqrt{n} \vecv_n'\vecw_n( \| \wh{\bfSigma}_n^* \|_{tr}^* - \| \bfSigma_n \|_{tr}^* ) 
	- \vecv_n'\vecw_n d_n^{-1/2} \sum_{i=1}^{d_n}  \overline{B}_n( 1 )_i \biggr| \\
	& \quad \le (1-W) | \sqrt{n} \vecv_n'(\wh{\bfSigma}_n - \bfSigma_n) \vecw_n
	- \overline{B}_n( 1 )_0 | \\
	&  \qquad + 
	W | \vecv_n'\vecw_n | \biggl|\sqrt{n} ( \| \wh{\bfSigma}_n^* \|_{tr}^* - \| \bfSigma_n \|_{tr}^* ) 
	- d_n^{-1/2} \sum_{i=1}^{d_n} \overline{B}_n( 1 )_i \biggr| \\
	& \quad = o(1), 
	\end{align*}
	as $ n \to \infty $, a.s., which shows (\ref{UniformApproxShr}).
\end{proof}

\begin{proof}[Proof of Theorem~\ref{Th_Shrinkage1b}]
By Theorem~\ref{Th_Basic} there exist, on a new probability space $(\Omega',\calF',P') $, an equivalent  process $ \{ \wt{\vecY}_{ni} : i \ge 1 \} \stackrel{d}{=} \{ \vecY_{ni} : i \ge 1 \} $ and a Brownian motion $ \{ \wt{B}_n(t)_0 : t \in [0,1] \} $ on $ [0,1] $, such that 
\begin{equation}
\label{ApproxCoord0}
 \sup_{t \in [0,1]} | \sqrt{n} \vecv_n'( \wt{\bfSigma}_n(t) - \bfSigma_n(t) ) \vecw_n - \wt{B}_n(t)_0 | = o(1),
\end{equation}
as $ n \to \infty $, $P'$-a.s., where $ \wt{\bfSigma}_n(t) = n^{-1} \sum_{i=1}^{\trunc{nt}} \wt{\vecY}_{ni} \wt{\vecY}_{ni}' $.
By Billingsley's lemma, \cite[Section~21, Lemma~2]{Billingsley1999}, there exist Brownian motions
$ \{ \overline{B}_n'(t)_0 : t \in [0,1] \}_n \stackrel{d}{=} \{ \wt{B}_n(t)_0 : t \in [0,1] \}_n $ defined on the original probability space $ (\Omega, \calF, P) $, such that
\begin{equation}
\label{SApproxSh Omega}
\sup_{t \in [0,1]} | \sqrt{n} \vecv_n'( \wh{\bfSigma}_n(t) - \bfSigma_n(t) ) \vecw_n - \overline{B}_n'(t)_0 | = o(1),
\end{equation}
as $ n \to \infty $, a.s. Indeed, recall that the infinite product of a complete and separabe metric space is complete and separable; in our case $ (D[0,1])^\infty $ equipped with the usual metric, see \cite[p.~241]{Billingsley1999}, induced by th Skorohod metric making $ D[0,1] $ separable and complete. Then apply \cite[Sec.~21, Lemma~1]{Billingsley1999} with $ \nu = \mathcal{L}( \{ \sqrt{n} \vecv_n'( \wt{\bfSigma}_n( \cdot ) - \bfSigma_n( \cdot ) ) \vecw_n, \{ \wt{B}_n( \cdot )_0 \}_{n \ge 1} ) $ and $ \sigma = \{ \sqrt{n} \vecv_n'( \wt{\bfSigma}_n( \cdot ) - \bfSigma_n( \cdot ) )\vecw_n \} $ to conclude the existence of $ \tau =: \{ \overline{B}_n'( \cdot )_0 \} $, a function of $ \sigma $ and $ U_1 $, such that (\ref{SApproxSh Omega}) holds, where the convergence w.r.t. the supnorm follows from the a.s. continuity of $ \overline{B}_n'( \cdot )_0 $.

Further, by Theorem~\ref{Th_Trace}, there exist, on a new probability space $ (\Omega'',\calF'',P'') $, an equivalent vector time series
$ \{ \check{\vecY}_{ni} : i \ge 1 \} \stackrel{d}{=} \{ \vecY_{ni} : i \ge 1 \} $, and Brownian motions $ \{ ( \wt{B}_n(t)_j )_{j=1}^{d_n} : t \in [0,1] \} $ on $ [0,1] $  in dimension $ d_n $ characterized as in the theorem, such that
\begin{equation}
\label{SApproxSh OmegaPrime}
  \left| \sqrt{n} ( \| \check{\bfSigma}_n(t) \|_{tr}^* - \| \bfSigma_n(t) \|_{tr}^* ) -  d_n^{-1/2} \sum_{j=1}^{d_n} \wt{B}_n(1)_j \right| = o(1),
\end{equation}
as $ n \to \infty $, $ P'' $-a.s., where $ \check{\bfSigma}_n(t) = n^{-1} \sum_{i=1}^{\trunc{nt}} \check{\vecY}_{ni} \check{\vecY}_{ni}' $. Again, an application of Billingsley's lemma shows the existence of Brownian motions $ \{ ( \overline{B}_n'(t)_j )_{j=1}^{d_n} : t \in [0,1] \}_n \stackrel{d}{=} \{ ( \wt{B}_n(t)_j )_{j=1}^{d_n} : t \in [0,1] \}_n $, such that 
\begin{equation}
\label{SApproxSh Tr}
  \left| \sqrt{n} ( \| \wh{\bfSigma}_n(t) \|_{tr}^* - \| \bfSigma_n(t) \|_{tr}^* ) -  d_n^{-1/2} \sum_{j=1}^{d_n} \overline{B}_n'(1)_j \right| = o(1),
\end{equation}
as $ n \to \infty $, a.s., i.e. on the original probability space. 

A priori, we have no information on the exact second order structure of the two Brownian motions, but $ \overline{B}_n'(1)_j $ is close to the associated process  $ \calD_{nj}(t) = T_{n,0}^{(n)}(j) $, cf. (\ref{DefDScaled}), and to the corresponding martingale approximation $ M_{\trunc{nt},0}^{(n)}(j) $ defined in (\ref{DefMartingalesScaled}), which allows us to study the convergence of the covariances
$ \Cov( \overline{B}_n'(1)_0, \overline{B}_n'(1)_j ) = ( \overline{B}_n'(1)_0, \overline{B}_n'(1)_j )_{L_2} $, $ j = 1, \dots, d_n $. First observe that
\[
  \max_{1 \le j \le d_n} \| \overline{B}_n'(1)_j - M_{n,0}^{(n)}(j)  \|_{L_2} = o(1),
\]
see \cite[Lemma~2]{Kouritzin1995}, \cite{StelandvonSachs2016}, and Lemma~\ref{HLemmaCov}, because of (\ref{ApproxCoord0}) and since   $ ( \overline{B}_n'(t)_j )_{j=1}^{d_n} $ satisfies
$ \sup_{t \in[0,1]} \sum_{j=1}^{d_n} | \overline{B}_n'(t)_j - \calD_{nj}(t) |^2 = o(1) $, 
as $ n \to \infty $, a.s., see Theorem~\ref{Th_Many_QFs} (ii). Also notice that $ \| \overline{B}_n'(1)_j \|_{L_2} $ and $ \| M_{n,0}^{(n)}( j ) \|_{L_2} $ are $ O(1) $, uniformly in $ j \ge 0 $. Now use the decomposition
\[
  (X, Y)_{L_2} = ( X', Y')_{L_2} + ( X', Y - Y' )_{L_2} + ( X - X', Y )_{L_2} 
\]
for $ X, Y, X', Y' \in L_2 $ to conclude that
\begin{align*}
  \Cov( \overline{B}_n'(1)_0, \overline{B}_n'(1)_j ) )
   & = ( M_{n,0}^{(n)}( 0 ), M_{n,0}^{(n)}( j ) )_{L_2} + ( M_{n,0}^{(n)}( 0 ), \overline{B}_n'(1)_j -  M_{n,0}^{(n)}( j ) )_{L_2} \\
   & \qquad + (( \overline{B}_n'(1)_0 - M_{n,0}^{(n)}( 0 ), M_{n,0}^{(n)}( j ) )_{L_2},
\end{align*}
where the last two terms are $ o(1) $, uniformly in $j$. E.g., 
\[
 \max_{1 \le j \le d_n} | ( \overline{B}_n'(1)_0 - M_{n,0}^{(n)}( 0 ), M_{n,0}^{(n)}( j ) )_{L_2}  | 
  \le \sup_{1 \le j} \| \overline{B}_n'(1)_0 - M_{n,0}^{(n)}( 0 ) \|_{L_2} \| M_{n,0}^{(n)}( j ) \|_{L_2}  = o(1),
\]
as $ n \to \infty $. Combining these estimates with Lemma~\ref{HLemmaCov} yields
\[
 \max_{1 \le j \le d_n} | \Cov( \overline{B}_n'(1)_0, \overline{B}_n'(1)_j ) 
  - \Cov( M_{n,0}^{(n)}( 0 ),  M_{n,}^{(n)}( j ) ) |  
 \ll  d_n^{-1} + o(1),
\] 
which establishes (\ref{ConvergenceCovBBs}), since the covariances of the approximating martingales equal $ d_n^{-1/2} \beta_n^2(\vecv_n, \vecw_n, \vece_j, \vece_j) + o(1) $, as $ n \to \infty $; the factor $ d_n^{-1/2} $ is due to the  additional scaling of $ \calD_{nj}(t) $ to approximate $ d_n $ bilinear forms by Theorem~\ref{Th_Many_QFs}.
\end{proof}

\begin{proof}[Proof of Theorem~\ref{Th_Shrinkage_Comparision}]
	Recall that, since $ \bfSigma_n = \matC_n \Lambda \matC_n' $ and (\ref{SuffCond}), the elements
	of $ \bfSigma_n $ are uniformly bounded (in $n$), such that $ | \vecv_n' \bfSigma_n \vecw_n | \le \| \vecv_n \|_{\ell_1} \| \vecw_n \|_{\ell_1} = O(1) $. This in turn implies that $ \lambda_{\max}( \bfSigma_n ) = O(1) $ and
	\begin{equation}
	\label{tr_Sigma_d_n_finite}
	\text{tr}( \Sigma_n ) d_n^{-1} = O(1). 
	\end{equation}
	Put
	\begin{equation}
	\label{DefRN}
	R_n( \wh{W}_n^*, W_n^* ) 
	= \vecv_n' \bfSigma_{n0}^s( \wh{W}_n^* ) \vecw_n - \vecv_n' \bfSigma_{n0}^s( W_n^* ) \vecw_n
	\end{equation}
	and notice that
	\[
	R_n( \wh{W}_n^*, W_n^* )  
	=  (W_n^*-\wh{W}_n^*) \vecv_n' \bfSigma_n \vecw_n + (\wh{W}_n^* - W_n^*) \text{tr}( \bfSigma_n ) d_n^{-1}  \vecv_n'\vecw_n.
	\]
	Using  (\ref{tr_Sigma_d_n_finite}) we obtain the bound
	\begin{equation}
	\label{Rn Rate}
	R_n( \wh{W}_n^*, W_n^* ) =  (\wh{W}_n^* - W_n^*) O(1).
	\end{equation}
	Observe that
	\[
	\wh{\bfSigma}_n^s( \wh{W}_n^* ) - \wh{\bfSigma}_n^s( W_n^* )
	= (W_n^* - \wh{W}_n^*) \wh{\bfSigma}_n + (\wh{W}_n^* - W_n^*) \text{tr}( \wh{\bfSigma}_n ) d_n^{-1} \matid_n
	\]
	is equal to the difference 
	\[ 
	\bfSigma_{n0}^s( \wh{W}_n^* ) - \bfSigma_{n0}^s( W_n^* ) 
	=
	(W_n^* - \wh{W}_n^*) \bfSigma_n + (\wh{W}_n^* - W_n^*) \text{tr}( \bfSigma_n ) d_n^{-1} \matid_n
	\]
	when replacing $ \bfSigma_n $ by  $ \wh{\bfSigma}_n $.  We have
	\begin{align*}
	& \wh{\bfSigma}_n^s( \wh{W}_n^* ) - \wh{\bfSigma}_n^s( W_n^* ) \\
	& \qquad = (W_n^* - \wh{W}_n^*) \bfSigma_n + (\wh{W}_n^* - W_n^*) \text{tr}( \bfSigma_n ) d_n^{-1} \matid_n \\
	& \qquad + (W_n^* - \wh{W}_n^*) (\wh{\bfSigma}_n - \bfSigma_n) 
	+ (\wh{W}_n^* - W_n^*)( \text{tr}( \wh{\bfSigma}_n ) - \text{tr}( \bfSigma_n ) ) d_n^{-1} \matid_n 
	\end{align*}
	Using (\ref{Rn Rate}), we therefore obtain for the associated bilinear form
	\begin{align*}
	& \vecv_n'( \wh{\bfSigma}_n( \wh{W}_n^* ) - \wh{\bfSigma}_n^s( W_n^* ) ) \vecw_n \\
	& \qquad = R_n( \wh{W}_n^*, W_n^* )  \\
	& \qquad \qquad + (W_n^* - \wh{W}_n^*)  \vecv_n'( \wh{\bfSigma}_n - \bfSigma_n ) \vecw_n 
	+ (W_n^* - \wh{W}_n^*) \vecv_n'\vecw_n ( \| \wh{\bfSigma}_n \|_{tr}^* - \| \bfSigma_n \|_{tr}^* ) \\
	& \qquad =  (\wh{W}_n^* - W_n^*) O(1) \\
	& \qquad \qquad +  (W_n^* -\wh{W}_n^*) \left( \overline{B}_n(1)_0 n^{-1/2} + o(n^{-1/2} ) \right) \\
	& \qquad \qquad +
	(\wh{W}_n^* - W_n^*) \left( n^{-1/2} d_n^{-1/2} \sum_{i=1}^{d_n} \overline{B}_n(1)_i
	+ o(n^{-1/2}) \right) \vecv_n'\vecw_n,
	\end{align*}
	as $ n \to \infty $, a.s..
\end{proof}

\begin{proof}[Proof of Theorem~\ref{Shrinkage_Oracle1}]
	Recall that $ \calA_n(W) = \sqrt{n} \vecv_n'( \wh{\bfSigma}_n^s(W) - \bfSigma_{n0}^s(W) ) \vecw_n $. 
	We have 
	\begin{align*}
	&     \vecv_n'( \wh{\bfSigma}_n^s( \wh{W}_n^* ) - \bfSigma_{n0}^s( W_n^* ) ) \vecw_n  \\
	& \qquad =   \vecv_n' (\wh{\bfSigma}_n^s( \wh{W}_n^* ) - \bfSigma_{n0}^s( \wh{W}_n^* ) )\vecw_n +
	\vecv_n'( \bfSigma_{n0}^s( \wh{W}_n^* ) - \bfSigma_{n0}^s( W_n^* ) )\vecw_n \\
	& \qquad =
	n^{-1/2} \calA_n( \wh{W}_n^* ) + R_n( \wh{W}_n^*, W_n^* ) 
	\end{align*}
	where again  $  R_n( \wh{W}_n^*, W_n^* ) =  (\wh{W}_n^* - W_n^*)  O(1) $.
	Further, using
	\[
	\sqrt{n} \vecv_n'( \wh{\bfSigma}_n^s( \wh{W}_n^* ) - \bfSigma_{n0}^s( \wh{W}_n^* ) ) \vecw_n
	= \calB_n( \wh{W}_n^* ) + o(1),
	\]
	a.s., we arrive at 
	\begin{align*}
	&     \vecv_n'( \wh{\bfSigma}_n^s( \wh{W}_n^* ) - \bfSigma_{n0}^s( W_n^* ) ) \vecw_n  \\
	& \qquad =
	n^{-1/2} \calA_n( \wh{W}_n^* ) + R_n( \wh{W}_n^*,W_n^* )  \\
	& \qquad = n^{-1/2} ( \calB_n( \wh{W}_n^* ) + o(1) )
	+ O(\wh{W}_n^* - W_n^*)  ,
	\end{align*}
	as $ n \to \infty $, a.s., which completes the proof.
\end{proof}

\begin{proof}[Proof of Theorem~\ref{Th_Shrinkage_Orth}]
	Let $ \calA_n(W) $ be defined as in (\ref{DefAn}). 
	Arguing as in the proof of Theorem~\ref{Th_Shrinkage1}, we obtain
	\begin{align*}
	| \calA_n(W) - \calB_n(W) |
	&\le (1-W) | \sqrt{n} \vecv_n'( \wh{\bfSigma}_n - \bfSigma_n) \vecw_n  - \alpha_n( \vecv_n, \vecw_n) \overline{B}_n(1)_0 |\\
	& \qquad + W \biggl| \vecv_n'\vecw_n \sqrt{n} ( \| \wh{\bfSigma}_n \|_{tr}^* - \| \bfSigma_n \|_{tr}^* ) - \vecv_n'\vecw_n d_n^{-1/2} \sum_{i=1}^{d_n} \overline{B}_n(1)_i \biggr|
	\end{align*} 
	The first summand is $ o(1) $, a.s., by Theorem~\ref{Th_Basic}.
	Under assumption (\ref{AlmostOrth}), the second term can be bounded by
	\begin{align*}
	R_n &= \biggl| \vecv_n' \vecw_n \sqrt{n}( \| \wh{\bfSigma}_n \|_{tr}^* - \| \wh{\bfSigma}_n \|_{tr}^* ) - \vecv_n'\vecw_n \sum_{i=1}^{d_n}  \overline{B}_n(1)_i \biggr| \\
	& = W | \vecv_n'\vecw_n | \biggl| \sqrt{n}   ( \| \wh{\bfSigma}_n \|_{tr}^* - \| \wh{\bfSigma}_n \|_{tr}^* ) 
	- d_n^{-1/2} \sum_{i=1}^{d_n}  \overline{B}_n(1)_i \biggr| \\
	& = o(1) ,
	\end{align*}
	as $n \to \infty $, a.s., which completes the proof.
\end{proof}

\appendix

\section{Notation and formulas}

We denote
\[
  \sigma_k^2 = E( \epsilon_k^2 ),  \gamma_k = E( \epsilon_k^4 ).
\]


The approximating martingales used to obtain the strong approximations require to
control the following quantities. For the reader's convenience, we reproduce them here from \cite{StelandvonSachs2016}  as well as some related formulas and results. Let
\begin{equation}
  f_{0,j}^{(n)} = f_{0,j}^{(n)}( \vecv_n, \vecw_n ) 
  = \sum_{\nu, \mu = 1}^{d_n} v_\nu w_\mu c_j^{(\nu)} c_j^{(\mu)},  \qquad j = 0, 1, \dots,
\end{equation}
\begin{equation}
  f_{l,j}^{(n)} = f_{l,j}^{(n)}( \vecv_n, \vecw_n ) 
  = \sum_{\nu, \mu = 1}^{d_n} v_\nu w_\mu [ c_j^{(\nu)} c_{j+l}^{(\mu)} + c_j^{(\mu)} c_{j+l}^{(\nu)} ], \qquad l = 1, 2, \dots; \ j = 0, 1, \dots,
\end{equation}
and
\begin{equation}
  \wt{f}_{l,i}^{(n)} = \wt{f}_{l,i}^{(n)}( \vecv_n, \vecw_n )
  = \sum_{j=i}^{\infty} f_{l,j}^{(n)} 
  = \sum_{j=i}^\infty \sum_{\nu, \mu = 1}^{d_n} v_\nu w_\mu [c_j^{(\nu)} c_{j+l}^{(\mu)} + c_j^{(\mu)} c_{j+l}^{(\nu)} ], \qquad l, i  = 0, 1, \dots.
\end{equation}

\begin{lemma_def} Suppose that $ \vecv_n, \vecw_n $ have uniformly bounded $ \ell_1 $-norm in the sense of equation~(\ref{l1Condition}). Then Assumption (A) implies 
\begin{equation}
\label{Ass1}
  \sup_{n \in \N} \sum_{i=1}^\infty \sum_{l=0}^\infty (\wt{f}_{l,i}^{(n)} -\wt{f}_{l,i+n'}^{(n)}  )^2 \le C (n')^{1-\theta}, \qquad \text{for all $ n' = 1, 2, \dots $},
\end{equation}
\begin{equation}
\label{Ass2}
  \sup_{n \in \N} \sum_{k=1}^{n'} \sum_{r=0}^\infty (\wt{f}_{r+k,0}^{(n)} )^2 \le C (n')^{1-\theta}, \qquad \text{for all $ n' = 1, 2, \dots $},
\end{equation}
\begin{equation}
\label{Ass3}
  \sup_{n \in \N} \sum_{k=1}^{n'} \sum_{l=0}^\infty (\wt{f}_{l,k}^{(n)} )^2  \le C (n')^{1-\theta}, \qquad \text{for all $ n' = 1, 2, \dots $},
\end{equation}
and there exist
\begin{equation}
\label{AlphaN}
 \alpha_n^2 = \alpha_n^2( \vecv_n, \vecw_n ) \ge 0, \qquad n \ge 1, 
\end{equation}
such that
\begin{equation}
\label{Ass4}
  (\wt{f}^{(n)}_{00})^2 \sum_{j=1}^{n'} ( \gamma_{m'+j} - \sigma_{m'+j}^4 )
+ \sum_{j=1}^{n'} \sum_{l=1}^{j-1} ( \wt{f}_{j-l,0}^{(n)} )^2 \sigma_{m'+j}^2 \sigma_{m'+l}^2 - n' \alpha_n^2
\le C (n')^{1-\theta},
\end{equation}
for all $ n', m' = 0, 1, \cdots $. 

Further, if
$ \vecv_n, \vecw_n, \wt{\vecv}_n, \wt{\vecw}_n $, $ n \ge 1 $, have uniformly
bounded $ \ell_1 $-norms, then there exist
\begin{equation}
\label{BetaNuMu}
 \beta_n^2 = \beta_n^2( \vecv_n,  \vecw_n, \wt{\vecv}_n, \wt{\vecw}_n  ),
 \qquad n \ge 1,
\end{equation}
with 
\begin{align}
\label{Ass5}
\wt{f}_{0,0}^{(n)}( \vecv_n, \vecw_n )
  \wt{f}_{0,0}^{(n)}( \wt{\vecv}_n, \wt{\vecw}_n ) 
  \sum_{j=1}^{n'} ( \gamma_{m'+j} - \sigma^4_{m'+j} )
&   +
  \sum_{j=1}^{n'} \sum_{l=1}^{j-1} \wt{f}_{j-l,0}^{(n)}( \vecv_n, \vecw_n ) 
\wt{f}_{j-l,0}^{(n)}( \wt{\vecv}_n, \wt{\vecw}_n ) \sigma^2_{m'+j} \sigma^2_{m'+l} \\ \nonumber
& - n' \beta_n^2(  \vecv_n,  \vecw_n, \wt{\vecv}_n, \wt{\vecw}_n  ) 
\stackrel{n',m'}{<<} (n')^{1-\theta}.
\end{align}
\end{lemma_def}

\section*{Acknowledgments}

Part of this work has been supported by a grant of the first author from Deutsche Forschungsgemeinschaft (DFG), grant No. STE 1034/11-1, which he gratefully acknowledges. Rainer von Sachs gratefully acknowledges funding by contract "Projet d'Actions de Recherche Concert\' ees'' No. 12/17-045 of the ,,Communaut\'e fran\c caise de Belgique'' and by IAP research network Grant P7/06 of the Belgian government (Belgian Science Policy).




\end{document}